\providecommand{\U}[1]{\protect\rule{.1in}{.1in}}
\providecommand{\U}[1]{\protect\rule{.1in}{.1in}}
\newtheorem{theorem}{Theorem}[section]
\newtheorem{proposition}{Proposition}[section]
\newtheorem{lemma}{Lemma}[section]
\newtheorem{remark}{Remark}[section]
\renewcommand{\@biblabel}[1]{}
\begin{document}

\begin{center}
{\Large \textbf{Nelson-Aalen tail product-limit process and extreme value
index estimation under random censorship}}

\medskip\medskip

{\large Brahim Brahimi, Djamel Meraghni, Abdelhakim Necir}$^{\ast}$\medskip

{\small \textit{Laboratory of Applied Mathematics, Mohamed Khider University,
Biskra, Algeria}}\medskip\medskip%
\[
\]

\end{center}

\noindent\textbf{Abstract}\medskip

\noindent On the basis of Nelson-Aalen nonparametric estimator of the
cumulative distribution function, we provide a weak approximation to tail
product-limit process for randomly right-censored heavy-tailed data. In this
context, a new consistent reduced-bias estimator of the extreme value index is
introduced and its asymptotic normality is established only by assuming the
second-order regular variation of the underlying distribution function. A
simulation study shows that the newly proposed estimator performs better than
the existing ones.\medskip\medskip

\noindent\textbf{Keywords:} Extreme values; Heavy tails; Hill estimator;
Nelson-Aalen estimator; Random censoring; Tail index.\medskip

\noindent\textbf{AMS 2010 Subject Classification:} 62P05; 62H20; 91B26; 91B30.

\vfill

\vfill

\noindent{\small $^{\text{*}}$Corresponding author:
\texttt{necirabdelhakim@yahoo.fr} \newline\noindent\textit{E-mail
addresses:}\newline\texttt{brah.brahim@gmail.com} (B.~Brahimi)\newline%
\texttt{djmeraghni@yahoo.com} (D.~Meraghni)}

\section{\textbf{Introduction\label{sec1}}}

\noindent Let $X_{1},...,X_{n}$ be $n\geq1$ independent copies of a
non-negative random variable (rv) $X,$ defined over some probability space
$\left(  \Omega,\mathcal{A},\mathbf{P}\right)  ,$ with a cumulative
distribution function (cdf) $F.\ $These rv's are censored to the right by a
sequence of independent copies $Y_{1},...,Y_{n}$ of a non-negative rv $Y,$
independent of $X$ and having a cdf $G.$ At each stage $1\leq j\leq n,$ we can
only observe the rv's $Z_{j}:=\min\left(  X_{j},Y_{j}\right)  $ and
$\delta_{j}:=\mathbf{1}\left\{  X_{j}\leq Y_{j}\right\}  ,$ with
$\mathbf{1}\left\{  \cdot\right\}  $ denoting the indicator function. The
latter rv indicates whether there has been censorship or not. If we denote by
$H$ the cdf of the observed $Z^{\prime}s,$ then, by the independence of $X$
and $Y,$ we have $1-H=\left(  1-F\right)  \left(  1-G\right)  .$ Throughout
the paper, we will use the notation $\overline{\mathcal{S}}(x):=\mathcal{S}%
(\infty)-\mathcal{S}(x),$ for any $\mathcal{S}.$ Assume further that $F$ and
$G$ are heavy-tailed or, in other words, that $\overline{F}$ and $\overline
{G}$ are regularly varying at infinity with negative indices $-1/\gamma_{1}$
and $-1/\gamma_{2}$ respectively. That is%
\begin{equation}
\lim_{t\rightarrow\infty}\frac{\overline{F}\left(  tx\right)  }{\overline
{F}\left(  t\right)  }=x^{-1/\gamma_{1}}\text{ and }\lim_{t\rightarrow\infty
}\frac{\overline{G}\left(  tx\right)  }{\overline{G}\left(  t\right)
}=x^{-1/\gamma_{2}}, \label{R-F}%
\end{equation}
for any $x>0.$\ This class of distributions\textbf{ }includes models such as
Pareto, Burr, Fr\'{e}chet, $\alpha-$stable $\left(  0<\alpha<2\right)  $ and
log-gamma, known to be very appropriate for fitting large insurance claims,
large fluctuations of prices, financial log-returns, ...
\citep[see, e.g.,][]{Res06}. The regular variation of $\overline{F}$ and
$\overline{G}$ implies that $\overline{H}$ is regularly varying as well, with
index $-1/\gamma$ where $\gamma:=\gamma_{1}\gamma_{2}/\left(  \gamma
_{1}+\gamma_{2}\right)  .$ Since weak approximations of extreme value theory
based statistics are achieved in the second-order framework
\citep[see][]{deHS96}, then it seems quite natural to suppose that cdf $F$
satisfies the well-known second-order condition of regular variation. That is,
we assume that for any $x>0$%
\begin{equation}
\underset{t\rightarrow\infty}{\lim}\frac{U_{F}\left(  tx\right)  /U_{F}\left(
t\right)  -x^{\gamma_{1}}}{A_{1}^{\ast}\left(  t\right)  }=x^{\gamma_{1}%
}\dfrac{x^{\tau}-1}{\tau}, \label{second-order}%
\end{equation}
where $\tau\leq0$\ is the second-order parameter and $A_{1}^{\ast}$ is a
function tending to $0,$ not changing sign near infinity and having a
regularly varying absolute value at infinity with index $\tau.$\ If $\tau=0,$
interpret $\left(  x^{\tau}-1\right)  /\tau$ as $\log x.$ In the sequel, the
functions $K^{\leftarrow}\left(  s\right)  :=\inf\left\{  x:K\left(  x\right)
\geq s\right\}  ,$ $0<s<1,$ and $U_{K}\left(  t\right)  :=K^{\leftarrow
}\left(  1-1/t\right)  ,$ $t>1,$ respectively stand for the quantile and tail
quantile functions of any given cdf $K.$ The analysis of extreme values of
randomly censored data, is a new research topic to which \cite{ReTo07} made a
very brief reference, in Section 6.1, as a first step but with no asymptotic
results. \cite{BeGDFF07} proposed estimators for the extreme value index (EVI)
$\gamma_{1}$ and high quantiles and discussed their asymptotic properties,
when the data are censored by a deterministic threshold. For their part,
\cite{EnFG08} adapted various classical EVI estimators to the case where data
are censored, by a random threshold, and proposed a unified method to
establish their asymptotic normality. Their approach is used by \cite{NDD2014}
to address the nonparametric estimation of the conditional EVI and large
quantiles. Based on Kaplan-Meier integration, \cite{WW2014} introduced two new
estimators and proved their consistency. They showed, by simulation, that they
perform better, in terms of bias and mean squared error (MSE) than the adapted
Hill estimator of \cite{EnFG08}, in the case where the tail index of the
censored distribution is less than that of the censoring one. \cite{BMN-2015}
used the empirical process theory to approximate the adapted Hill estimator in
terms of Gaussian processes, then they derived its asymptotic normality only
under the usual second-order condition of regular variation. Their approach
allows to relax the assumptions, made in \cite{EnFG08}, on the heavy-tailed
distribution functions and the sample fraction of upper order statistics used
in estimate computation. Recently, \cite{BBWG-16} developed improved
estimators for the EVI and tail probabilities by reducing their biases which
can be quite substantial. In this paper, we develop a new methodology, for the
estimation of the tail index under random censorship, by considering the
nonparametric estimator of cdf $F,$ based on Nelson-Aalen estimator
\citep[][]{Nelson, Aalen} of the cumulative hazard function%
\[
\Lambda\left(  z\right)  :=\int_{0}^{z}\frac{dF\left(  v\right)  }%
{\overline{F}\left(  v\right)  }=\int_{0}^{z}\frac{dH^{\left(  1\right)
}\left(  v\right)  }{\overline{H}\left(  v\right)  },
\]
where $H^{\left(  1\right)  }\left(  z\right)  :=\mathbf{P}\left(  Z_{1}\leq
z,\delta_{1}=1\right)  =\int_{0}^{z}\overline{G}\left(  y\right)  dF\left(
y\right)  ,$ $z\geq0.$ A natural nonparametric estimator $\Lambda_{n}\left(
z\right)  $ of $\Lambda\left(  z\right)  $ is obtained by replacing $H\left(
v\right)  $ and $H^{\left(  1\right)  }\left(  v\right)  $ by their respective
empirical counterparts $H_{n}\left(  v\right)  :=n^{-1}\sum_{i=1}%
^{n}\mathbf{1}\left(  Z_{i}\leq v\right)  $ and $H_{n}^{\left(  1\right)
}\left(  v\right)  :=n^{-1}\sum_{i=1}^{n}\delta_{i}\mathbf{1}\left(  Z_{i}\leq
v\right)  $ pertaining to the observed $Z$-sample. However, since
$\overline{H}_{n}\left(  Z_{n:n}\right)  =0,$ we use $\overline{H}_{n}\left(
v-\right)  :=\lim_{u\uparrow v}\overline{H}_{n}\left(  u\right)  $ instead of
$\overline{H}_{n}\left(  v\right)  $ \citep[see, e.g.,][page 295]{SW86} to get%
\[
\Lambda_{n}\left(  z\right)  :=%
%TCIMACRO{\dint _{0}^{z}}%
%BeginExpansion
{\displaystyle\int_{0}^{z}}
%EndExpansion
\dfrac{dH_{n}^{\left(  1\right)  }\left(  v\right)  }{\overline{H}_{n}\left(
v-\right)  }=%
%TCIMACRO{\dsum \limits_{i=1}^{n}}%
%BeginExpansion
{\displaystyle\sum\limits_{i=1}^{n}}
%EndExpansion
\dfrac{\delta_{i}\mathbf{1}\left(  Z_{i}\leq z\right)  }{n-i+1},\text{ for
}z\leq Z_{n:n}.
\]
From the definition of $\Lambda\left(  z\right)  ,$ we deduce that
$\overline{F}\left(  z\right)  =\exp\left\{  -\Lambda\left(  z\right)
\right\}  ,$ which by substituting $\Lambda_{n}\left(  z\right)  $ for
$\Lambda\left(  z\right)  ,$ yields Nelson-Aalen estimator of cdf $F,$ given
by%
\[
F_{n}^{\left(  NA\right)  }\left(  z\right)  :=1-%
%TCIMACRO{\dprod \limits_{i:Z_{i:n}\leq z}}%
%BeginExpansion
{\displaystyle\prod\limits_{i:Z_{i:n}\leq z}}
%EndExpansion
\exp\left\{  -\dfrac{\delta_{\left[  i:n\right]  }}{n-i+1}\right\}  ,\text{
for }z\leq Z_{n:n},
\]
where $Z_{1:n}\leq...\leq Z_{n:n}$ denote the order statistics, pertaining to
the sample $\left(  Z_{1},...,Z_{n}\right)  $ and $\delta_{\left[  1:n\right]
},...,\delta_{\left[  n:n\right]  }$ the associated concomitants so that
$\delta_{\left[  j:n\right]  }=\delta_{i}$ if $Z_{j:n}=Z_{i}.$ Note that for
our needs and in the spirit of what \cite{Efron} did to Kaplan-Meier estimator
\citep[][]{KM58} (given in $\left(  \ref{KM}\right)  ),$ we complete
$F_{n}^{\left(  NA\right)  }$ beyond the largest observation by $1.$ In other
words, we define a nonparametric estimator of cdf $F$ by%
\[
F_{n}\left(  z\right)  :=\left\{
\begin{array}
[c]{lcc}%
F_{n}^{\left(  NA\right)  }\left(  z\right)  & \text{for} & z\leq Z_{n:n},\\
1 & \text{for} & z>Z_{n:n}.
\end{array}
\right.
\]
By considering samples of various sizes, \cite{FH84} numerically compared
$F_{n}$ with Kaplan-Meier (nonparametric maximum likelihood) estimator of $F$
\citep[][]{KM58}, given in $\left(  \ref{KM}\right)  ,$ and pointed out that
they are asymptotically equivalent and usually quite close to each other. A
nice discussion on the tight relationship between the two estimators may be
found in \cite{HS6}.\medskip

\noindent In the spirit of the tail product-limit process for randomly
right-truncated data, recently introduced by \cite{BMN2016}, we define
Nelson-Aalen tail product-limit process by%
\begin{equation}
D_{n}\left(  x\right)  :=\sqrt{k}\left(  \frac{\overline{F}_{n}\left(
xZ_{n-k:n}\right)  }{\overline{F}_{n}\left(  Z_{n-k:n}\right)  }%
-x^{-1/\gamma_{1}}\right)  ,\text{ }x>0, \label{KM-P}%
\end{equation}
where $k=k_{n}$ is an integer sequence satisfying suitable assumptions. In the
case of complete data, the process $D_{n}\left(  x\right)  $ reduces to
$\mathbf{D}_{n}\left(  x\right)  :=\sqrt{k}\left(  \dfrac{n}{k}\overline
{\mathbf{F}}_{n}\left(  xX_{n-k:n}\right)  -x^{-1/\gamma_{1}}\right)  ,$
$x>0,$ where $\mathbf{F}_{n}\left(  x\right)  :=n^{-1}\sum_{i=1}^{n}%
\mathbf{1}\left(  X_{i}\leq x\right)  $ is the usual empirical cdf based on
the fully observed sample $\left(  X_{1},...,X_{n}\right)  .$ Combining
Theorems 2.4.8 and 5.1.4 in \cite{deHF06}, we infer that under the
second-order condition of regular variation $\left(  \ref{second-order}%
\right)  ,$ there exists a sequence of standard Weiner processes $\left\{
\mathcal{W}_{n}\left(  s\right)  ;\text{ }0\leq s\leq1\right\}  $ such that,
for any $x_{0}>0$ and $0<\epsilon<1/2,$%
\begin{equation}
\sup_{x\geq x_{0}}x^{\epsilon/\gamma_{1}}\left\vert \mathbf{D}_{n}\left(
x\right)  -\mathcal{J}_{n}\left(  x\right)  -x^{-1/\gamma_{1}}\dfrac
{x^{\tau/\gamma_{1}}-1}{\tau/\gamma_{1}}\sqrt{k}A_{1}\left(  n/k\right)
\right\vert \overset{\mathbf{p}}{\rightarrow}0,\text{ as }n\rightarrow\infty,
\label{TP-C}%
\end{equation}
where $A_{1}\left(  t\right)  :=A_{1}^{\ast}\left(  1/\overline{F}\left(
t\right)  \right)  $ and $\mathcal{J}_{n}\left(  x\right)  :=\mathcal{W}%
_{n}\left(  x^{-1/\gamma_{1}}\right)  -x^{-1/\gamma_{1}}\mathcal{W}_{n}\left(
1\right)  .$ One of the main applications of the this weak approximation is
the asymptotic normality of tail indices. Indeed, let us consider Hill's
estimator \citep[][]{Hill75}%
\[
\widehat{\gamma}_{1}^{\left(  H\right)  }:=k^{-1}\sum_{i=1}^{k}\log\left(
X_{n-i+1:n}/X_{n-k:n}\right)  ,
\]
which may be represented, as a functional of the process $\mathbf{D}%
_{n}\left(  x\right)  ,$ into%
\[
\sqrt{k}\left(  \widehat{\gamma}_{1}^{\left(  H\right)  }-\gamma_{1}\right)
=\int_{1}^{\infty}x^{-1}\mathbf{D}_{n}\left(  x\right)  dx.
\]
It follows, in view of approximation $\left(  \ref{TP-C}\right)  ,$ that
\[
\sqrt{k}\left(  \widehat{\gamma}_{1}^{\left(  H\right)  }-\gamma_{1}\right)
=\int_{1}^{\infty}x^{-1}\mathcal{J}_{n}\left(  x\right)  dx+\dfrac{\sqrt
{k}A_{1}\left(  n/k\right)  }{1-\tau}+o_{\mathbf{p}}\left(  1\right)  ,
\]
leading to $\sqrt{k}\left(  \widehat{\gamma}_{1}^{\left(  H\right)  }%
-\gamma_{1}\right)  \overset{\mathcal{D}}{\rightarrow}\mathcal{N}\left(
\dfrac{\widetilde{\lambda}}{1-\tau},\gamma_{1}^{2}\right)  ,$ provided that
$\sqrt{k}A_{1}\left(  n/k\right)  \rightarrow\widetilde{\lambda}<\infty.$ For
more details on this matter, see for instance, \cite{deHF06} page 76.\medskip\ 

\noindent The major goal of this paper is to provide an analogous result to
(\ref{TP-C}) in the random censoring setting through the tail product-limit
process $D_{n}\left(  x\right)  $ and propose a new asymptotically normal
estimator of the tail index. To the best of our knowledge, this approach has
not been considered yet in the extreme value theory literature.\ Our
methodology is based on the uniform empirical process theory \citep[][]{SW86}
and the related weak approximations \citep[][]{CsCsHM86}. Our main result,
given in Section \ref{sec2} consists in the asymptotic representation of
$D_{n}\left(  x\right)  $ in terms of Weiner processes. As an application, we
introduce, in Section \ref{sec3}, a Hill-type estimator for the tail index
$\gamma_{1}$ based on $F_{n}.$ The asymptotic normality of the newly proposed
estimator is established, in the same section, by means of the aforementioned
Gaussian approximation of $D_{n}\left(  x\right)  $ and its finite sample
behavior is checked by simulation in Section \ref{sec4}. The proofs are
postponed to Section \ref{sec5} and some results that are instrumental to our
needs are gathered in\textbf{ }the Appendix.

\section{\textbf{Main result\label{sec2}}}

\begin{theorem}
\label{Theorem1} Let $F$ and $G$ be two cdf's with regularly varying tails
(\ref{R-F}) and assume that the second-order condition of regular variation
(\ref{second-order}) holds with $\gamma_{1}<\gamma_{2}.\ $Let $k=k_{n}$ be an
integer sequence such that $k\rightarrow\infty,$ $k/n\rightarrow0$ and
$\sqrt{k}A_{1}\left(  h\right)  =O\left(  1\right)  ,$ where $h=h_{n}%
:=U_{H}\left(  n/k\right)  .$ Then, there exists a sequence of standard Weiner
processes $\left\{  W_{n}\left(  s\right)  ;\text{ }0\leq s\leq1\right\}  $
defined on the probability space $\left(  \Omega,\mathcal{A},\mathbf{P}%
\right)  ,$ such that for every $0<\epsilon<1/2,$ we have, as $n\rightarrow
\infty,$%
\begin{equation}
\sup_{x\geq p^{\gamma}}x^{\epsilon/\left(  p\gamma_{1}\right)  }\left\vert
D_{n}\left(  x\right)  -J_{n}\left(  x\right)  -x^{-1/\gamma_{1}}%
\dfrac{x^{\tau/\gamma_{1}}-1}{\tau/\gamma_{1}}\sqrt{k}A_{1}\left(  h\right)
\right\vert \overset{\mathbf{p}}{\rightarrow}0, \label{TP-C1}%
\end{equation}
where $J_{n}\left(  x\right)  =J_{1n}\left(  x\right)  +J_{2n}\left(
x\right)  ,$ with%
\[
J_{1n}\left(  x\right)  :=\sqrt{\frac{n}{k}}\left\{  x^{1/\gamma_{2}%
}\mathbf{W}_{n,1}\left(  \frac{k}{n}x^{-1/\gamma}\right)  -x^{-1/\gamma_{1}%
}\mathbf{W}_{n,1}\left(  \frac{k}{n}\right)  \right\}  ,
\]
and
\[
J_{2n}\left(  x\right)  :=\dfrac{x^{-1/\gamma_{1}}}{\gamma}\sqrt{\dfrac{n}{k}}%
%TCIMACRO{\dint _{1}^{x}}%
%BeginExpansion
{\displaystyle\int_{1}^{x}}
%EndExpansion
u^{1/\gamma-1}\left\{  p\mathbf{W}_{n,2}\left(  \dfrac{k}{n}u^{-1/\gamma
}\right)  -q\mathbf{W}_{n,1}\left(  \dfrac{k}{n}u^{-1/\gamma}\right)
\right\}  du,
\]
where $\mathbf{W}_{n,1}$ and $\mathbf{W}_{n,2}$ are two independent Weiner
processes defined, for $0\leq s\leq1,$ by%
\[
\mathbf{W}_{n,1}\left(  s\right)  :=\left\{  W_{n}\left(  \theta\right)
-W_{n}\left(  \theta-ps\right)  \right\}  \mathbf{1}\left(  \theta
-ps\geq0\right)  \text{ and }\mathbf{W}_{n,2}\left(  s\right)  :=W_{n}\left(
1\right)  -W_{n}\left(  1-qs\right)  ,
\]
with $\theta:=H^{\left(  1\right)  }\left(  \infty\right)  ,$ $p=1-q:=\gamma
/\gamma_{1}.$
\end{theorem}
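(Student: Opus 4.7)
The plan is to start from the Nelson-Aalen identity $\overline{F}_{n}(z)=\exp(-\Lambda_{n}(z))$ for $z\leq Z_{n:n}$, which gives
\[
\frac{\overline{F}_{n}(xZ_{n-k:n})}{\overline{F}_{n}(Z_{n-k:n})}=\exp\bigl(-R_{n}(x)\bigr),\qquad R_{n}(x):=\int_{Z_{n-k:n}}^{xZ_{n-k:n}}\frac{dH_{n}^{(1)}(v)}{\overline{H}_{n}(v-)}.
\]
Let $R(x)$ denote the deterministic analogue obtained by replacing $H_{n}^{(1)}$ and $\overline{H}_{n}$ by $H^{(1)}$ and $\overline{H}$; then $e^{-R(x)}=\overline{F}(xZ_{n-k:n})/\overline{F}(Z_{n-k:n})$. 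I would split $D_{n}=A_{n}+B_{n}$ where $A_{n}(x):=\sqrt{k}(e^{-R_{n}(x)}-e^{-R(x)})$ is the purely stochastic part and $B_{n}(x):=\sqrt{k}(e^{-R(x)}-x^{-1/\gamma_{1}})$ is a deterministic bias; this reduces the proof to two separate tasks.

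For $B_{n}$, I would first argue $Z_{n-k:n}/h\overset{\mathbf{p}}{\rightarrow}1$ from regular variation of $\overline{H}$ with index $-1/\gamma$ and standard extreme-quantile asymptotics, then apply the second-order condition $(\ref{second-order})$ on $\overline{F}$ at $t=Z_{n-k:n}$, together with Potter-type bounds on $A_{1}$ (to be collected in the Appendix), to obtain
\[
B_{n}(x)=x^{-1/\gamma_{1}}\dfrac{x^{\tau/\gamma_{1}}-1}{\tau/\gamma_{1}}\sqrt{k}\,A_{1}(h)+o_{\mathbf{p}}(1),
\]
uniformly in $x\geq p^{\gamma}$ against the weight $x^{\epsilon/(p\gamma_{1})}$; the hypothesis $\sqrt{k}A_{1}(h)=O(1)$ will keep the leading term bounded.

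For $A_{n}$, I will use $e^{-R(x)}\to x^{-1/\gamma_{1}}$ and the Taylor expansion $e^{-R_{n}}-e^{-R}\sim -x^{-1/\gamma_{1}}(R_{n}-R)$, so that $A_{n}(x)\approx -x^{-1/\gamma_{1}}\sqrt{k}(R_{n}(x)-R(x))$. After the change of variable $v=uZ_{n-k:n}$ and the linearisation $1/\overline{H}_{n}\approx 1/\overline{H}-(\overline{H}_{n}-\overline{H})/\overline{H}^{2}$, $R_{n}(x)-R(x)$ becomes a linear functional of the bivariate empirical process $(H_{n}-H,H_{n}^{(1)}-H^{(1)})$. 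I would then invoke the strong approximation of \cite{CsCsHM86} to couple the sub-distributional empirical processes corresponding to $(Z_{i},\delta_{i}=1)$ and $(Z_{i},\delta_{i}=0)$ to the increments of a single standard Wiener process $W_{n}$ on the disjoint intervals $[\theta-pk/n,\theta]$ and $[1-qk/n,1]$ of $[0,1]$; the resulting pair is precisely $\mathbf{W}_{n,1}$ and $\mathbf{W}_{n,2}$, independent by disjointness of the underlying sub-intervals of $W_{n}$, and the choice of the proportions $p$ and $q$ reflects the fact that in the extreme tail of $Z$ the conditional probability of being uncensored (resp.\ censored) tends to $p=\gamma/\gamma_{1}$ (resp.\ $q$). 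Using $\overline{H}(uZ_{n-k:n})\sim(k/n)u^{-1/\gamma}$ together with $dH^{(1)}(uZ_{n-k:n})\sim(k/n)\gamma_{1}^{-1}u^{-1/\gamma-1}\,du$ and an integration by parts on the first integral, the boundary contribution should collapse into $-x^{1/\gamma_{1}}J_{1n}(x)$ and the interior contribution into $-x^{1/\gamma_{1}}J_{2n}(x)$, producing exactly $J_{n}(x)=J_{1n}(x)+J_{2n}(x)$.

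The hard part will be the \emph{uniform} control on $x\geq p^{\gamma}$ against the polynomial weight $x^{\epsilon/(p\gamma_{1})}$: standard Gaussian approximations are sharp only on bounded sets, so stretching to $x\to\infty$ requires pairing Potter bounds on $\overline{H}$, $\overline{F}$, $\overline{G}$ with oscillation moduli of $W_{n}$, very much in the spirit of the proof of $(\ref{TP-C})$. A secondary difficulty will be justifying the Taylor expansion of $e^{-R_{n}}-e^{-R}$ uniformly near the lower cut-off $x=p^{\gamma}$, where $xZ_{n-k:n}$ may dip into a region in which the Nelson-Aalen estimator becomes unstable; this requires first establishing $\sup_{x\geq p^{\gamma}}|R_{n}(x)-R(x)|=o_{\mathbf{p}}(1)$, and is the reason a generic $x_{0}>0$ (as in the complete-data case) does not suffice as a lower cut-off.
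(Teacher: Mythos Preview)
Your split $D_{n}=A_{n}+B_{n}$ coincides with the paper's decomposition $\sum_{i=1}^{4}\sqrt{k}\,\mathbb{M}_{ni}$: indeed $A_{n}=\sum_{i=1}^{3}\sqrt{k}\,\mathbb{M}_{ni}$ and $B_{n}=\sqrt{k}\,\mathbb{M}_{n4}$, and your treatment of $B_{n}$ is essentially the paper's. The real divergence is in how $A_{n}$ is linearised. The paper never takes the logarithm: it uses the integral representation $\overline{F}_{n}(z)=\int_{z}^{\infty}\ell_{n}(w)\,dH_{n}^{(1)}(w)$ with $\ell_{n}(w)=\exp\{\int_{0}^{w}dH_{n}^{(0)}/\overline{H}_{n}\}$, and Taylor-expands $\ell_{n}-\ell$ around the \emph{censoring} cumulative hazard increment $\int_{0}^{w}dH_{n}^{(0)}/\overline{H}_{n}-\int_{0}^{w}dH^{(0)}/\overline{H}$. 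This yields the three pieces $\mathbb{T}_{n1},\mathbb{T}_{n2},\mathbb{T}_{n3}$ plus remainders, each analysed via the processes $\beta_{n},\widetilde{\beta}_{n}$ and the Cs\"org\H{o}--Cs\"org\H{o}--Horv\'ath--Mason approximation. Your route---Taylor-expand $e^{-R_{n}}-e^{-R}$ and then linearise $R_{n}-R$ in $(H_{n}^{(1)}-H^{(1)},\,H_{n}-H)$---is formally different but would land on the same Gaussian functionals if it could be justified.

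The gap is precisely in that justification, and you have located it at the wrong end of the range. The identity $\overline{F}_{n}(xZ_{n-k:n})/\overline{F}_{n}(Z_{n-k:n})=e^{-R_{n}(x)}$ holds only for $xZ_{n-k:n}\leq Z_{n:n}$; beyond that point the Efron completion forces $\overline{F}_{n}=0$ while $e^{-R_{n}(x)}$ stays strictly positive, so your starting formula already fails for large $x$. More seriously, the Taylor step $e^{-R_{n}}-e^{-R}\approx -e^{-R}(R_{n}-R)$ requires $\sup_{x}|R_{n}(x)-R(x)|=o_{\mathbf p}(1)$, and this is \emph{not} available uniformly up to $x\sim Z_{n:n}/Z_{n-k:n}$: there both $R_{n}(x)$ and $R(x)$ are of order $p\log k$ and their difference need not vanish (the Nelson--Aalen increment over the top $k$ order statistics has non-negligible fluctuations near the maximum). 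The paper sidesteps this entirely because $\overline{F}_{n}(xZ_{n-k:n})-\overline{F}(xZ_{n-k:n})$ is analysed additively, without logs, and the integral representation against $dH_{n}^{(1)}$ automatically terminates at $Z_{n:n}$, matching the Efron convention. If you want to salvage your route you would need an extra argument splitting $x\geq p^{\gamma}$ into a bounded range (where the expansion is valid) and a far tail $x>M_{n}$ (where $D_{n}$, $J_{n}$ and the bias are all shown directly to be $o_{\mathbf p}(x^{-\epsilon/(p\gamma_{1})})$); this is doable but is real additional work that your plan does not contain. The difficulty near the lower cut-off $x=p^{\gamma}$, by contrast, is comparatively mild.
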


\begin{remark}
\label{Rmk1}It is noteworthy that the assumption $\gamma_{1}<\gamma_{2}$ is
required to ensure that enough extreme data is available for the inference to
be accurate. In other words, the proportion $p$ of the observed extreme values
has to be greater than $1/2.$ This assumption is already considered by
\cite{WW2014} and, in the random truncation context, by \cite{GS2015} and
\cite{BMN2016}.
\end{remark}

\begin{remark}
\label{Rmk2}In the complete data case, we use $\mathbf{F}_{n}$\ instead of
$F_{n}\ $and we have $p=1=\theta.$ This implies that $q=0,$ $H\equiv F,$
$U_{F}\left(  h\right)  =n/k,$ $J_{1n}\left(  x\right)  \overset{\mathcal{D}%
}{=}\mathbf{W}_{n}\left(  x^{-1/\gamma_{1}}\right)  -x^{-1/\gamma_{1}%
}\mathbf{W}_{n}\left(  1\right)  ,$ $J_{2n}\left(  x\right)  =0$ and
$\mathbf{W}_{n}\left(  s\right)  =W_{n}\left(  1\right)  -W_{n}\left(
1-s\right)  .$ Since
\[
\left\{  W_{n}\left(  s\right)  ,\text{ }0\leq s\leq1\right\}  \overset
{\mathcal{D}}{=}\left\{  W_{n}\left(  1\right)  -W_{n}\left(  1-s\right)
,\text{ }0\leq s\leq1\right\}  ,
\]
it follows that $J_{1n}\left(  x\right)  \overset{\mathcal{D}}{=}W_{n}\left(
x^{-1/\gamma_{1}}\right)  -x^{-1/\gamma_{1}}W_{n}\left(  1\right)  $ and so
approximations $\left(  \ref{TP-C1}\right)  $ and $\left(  \ref{TP-C}\right)
$ agree for $x_{0}=p^{\gamma}.$ The symbol $\overset{\mathcal{D}}{=}$ stands
for equality in distribution.
\end{remark}

\section{\textbf{Tail index estimation \label{sec3}}}

\noindent In the last decade, some authors began to be attracted by the
estimation of the EVI $\gamma_{1}$ when the data are subject to random
censoring. For instance, \cite{EnFG08} adapted the classical Hill estimator
(amongst others) to a censored sample to introduce $\widehat{\gamma}%
_{1}^{\left(  EFG\right)  }:=\widehat{\gamma}^{\left(  H\right)  }/\widehat
{p}$ as an asymptotically normal estimator for $\gamma_{1},$ where
$\widehat{\gamma}^{\left(  H\right)  }:=k^{-1}\sum_{i=1}^{k}\log\left(
Z_{n-i+1:n}/Z_{n-k:n}\right)  ,$ is Hill's estimator of $\gamma$ based on the
complete sample $Z_{1},...,Z_{n}$ and $\widehat{p}:=k^{-1}\sum_{i=1}^{k}%
\delta_{\left[  n-i+1:n\right]  }.$ By using the empirical process theory
tools and only by assuming the second-order condition of regular variation of
the tails of $F$ and $G,$ \cite{BMN-2015} derived, in Theorem 2.1 (assertion
$\left(  2.9\right)  ),$ a useful weak approximation to $\widehat{\gamma}%
_{1}^{\left(  EFG\right)  }$ in terms of a sequence of Brownian bridges. They
deduced in Corollary 2.1 that%
\begin{equation}
\sqrt{k}\left(  \widehat{\gamma}_{1}^{\left(  EFG\right)  }-\gamma_{1}\right)
\overset{\mathcal{D}}{\rightarrow}\mathcal{N}\left(  \dfrac{\lambda}{1-p\tau
},\frac{\gamma_{1}^{2}}{p}\right)  ,\text{ as }n\rightarrow\infty,
\label{a-n-efg}%
\end{equation}
provided that $\sqrt{k}A_{1}\left(  h\right)  \rightarrow\lambda.$ For their
part, \cite{WW2014} proposed two estimators which incidentally can be derived,
through a slight modification, from the one we will define later on. They
proved their consistency (but not the asymptotic normality) by using similar
assumptions as those of \cite{EnFG08}. These estimators are defined by
\[
\widehat{\gamma}_{1}^{\left(  W1\right)  }:=\frac{1}{n\left(  1-F_{n}^{\left(
KM\right)  }\left(  Z_{n-k:n}\right)  \right)  }\sum_{i=1}^{k}\frac
{\delta_{\left[  i:n-i+1\right]  }}{1-G_{n}^{\left(  KM\right)  }\left(
Z_{n-i+1:n}-\right)  }\log\frac{Z_{n-i+1:n}}{Z_{n-k:n}}%
\]
and%
\[
\widehat{\gamma}_{1}^{\left(  W2\right)  }:=\frac{1}{n\left(  1-F_{n}^{\left(
KM\right)  }\left(  Z_{n-k:n}\right)  \right)  }\sum_{i=1}^{k}\frac{1}%
{1-G_{n}^{\left(  KM\right)  }\left(  Z_{n-i+1:n}-\right)  }i\log
\frac{Z_{n-i+1:n}}{Z_{n-i:n}},
\]
where, for $z<Z_{n:n},$%
\begin{equation}
F_{n}^{\left(  KM\right)  }\left(  z\right)  :=1-%
%TCIMACRO{\dprod \limits_{i:Z_{i:n}\leq z}}%
%BeginExpansion
{\displaystyle\prod\limits_{i:Z_{i:n}\leq z}}
%EndExpansion
\left(  \dfrac{n-i}{n-i+1}\right)  ^{\delta_{\left[  i:n\right]  }}\text{ and
}G_{n}^{\left(  KM\right)  }\left(  z\right)  :=1-%
%TCIMACRO{\dprod \limits_{i:Z_{i:n}\leq z}}%
%BeginExpansion
{\displaystyle\prod\limits_{i:Z_{i:n}\leq z}}
%EndExpansion
\left(  \dfrac{n-i}{n-i+1}\right)  ^{1-\delta_{\left[  i:n\right]  }},
\label{KM}%
\end{equation}
are Kaplan-Meier estimators of $F$ and $G$ respectively. Thereafter, we will
see that the assumptions under which we establish the asymptotic normality of
our estimator are lighter and more familiar in the extreme value context. We
start the definition of our estimator by noting that, from Theorem 1.2.2 in
\cite{deHF06}, the first order condition of regular variation (\ref{R-F})
implies that $\lim_{t\rightarrow\infty}\int_{1}^{\infty}x^{-1}\overline
{F}(tx)/\overline{F}\left(  t\right)  dt=\gamma_{1},$which, after an
integration by parts, may be rewritten into%
\begin{equation}
\lim_{t\rightarrow\infty}\frac{1}{\overline{F}\left(  t\right)  }\int
_{t}^{\infty}\log\left(  \frac{x}{t}\right)  dF(x)=\gamma_{1}. \label{log}%
\end{equation}
By replacing $F$ by $F_{n}$ and letting $t=Z_{n-k:n},$ we obtain%
\begin{equation}
\widehat{\gamma}_{1}=\frac{1}{\overline{F}_{n}\left(  Z_{n-k:n}\right)  }%
\int_{Z_{n-k:n}}^{\infty}\log\left(  x/Z_{n-k:n}\right)  dF_{n}\left(
x\right)  , \label{gchap}%
\end{equation}
as an estimator of $\gamma_{1}.$ Before we proceed with the construction of
$\widehat{\gamma}_{1},$ we need to define a function $H^{\left(  0\right)  },$
that is somewhat similar to $H^{\left(  1\right)  },$ and its empirical
version by $H^{\left(  0\right)  }\left(  z\right)  :=\mathbf{P}\left(
Z_{1}\leq z,\delta_{1}=0\right)  =\int_{0}^{z}\overline{F}\left(  y\right)
dG\left(  y\right)  $ and $H_{n}^{\left(  0\right)  }\left(  z\right)
:=n^{-1}\sum_{i=1}^{n}\left(  1-\delta_{i}\right)  \mathbf{1}\left(  Z_{i}\leq
z\right)  ,$ $z\geq0,$ respectively. Now, note that $dF_{n}\left(  z\right)
=\exp\left\{  \int_{0}^{z}dH_{n}^{\left(  0\right)  }\left(  v\right)
/\overline{H}_{n}\left(  v-\right)  \right\}  dH_{n}^{\left(  1\right)
}\left(  z\right)  ,$ then we have%
\[
\widehat{\gamma}_{1}=\frac{\dfrac{1}{n}%
%TCIMACRO{\dsum \limits_{i=n-k+1}^{n}}%
%BeginExpansion
{\displaystyle\sum\limits_{i=n-k+1}^{n}}
%EndExpansion
\delta_{\left[  i:n\right]  }\log\dfrac{Z_{i:n}}{Z_{n-k:n}}\exp\left\{
%TCIMACRO{\dsum \limits_{j=1}^{i}}%
%BeginExpansion
{\displaystyle\sum\limits_{j=1}^{i}}
%EndExpansion
\dfrac{1-\delta_{\left[  j:n\right]  }}{n-j+1}\right\}  }{%
%TCIMACRO{\dprod \limits_{j=1}^{n-k}}%
%BeginExpansion
{\displaystyle\prod\limits_{j=1}^{n-k}}
%EndExpansion
\exp\left\{  -\dfrac{\delta_{\left[  j:n\right]  }}{n-j+1}\right\}  },
\]
which may be rewritten into%
\[
\frac{\dfrac{1}{n}%
%TCIMACRO{\dsum \limits_{i=n-k+1}^{n}}%
%BeginExpansion
{\displaystyle\sum\limits_{i=n-k+1}^{n}}
%EndExpansion
\delta_{\left[  i:n\right]  }%
%TCIMACRO{\dprod \limits_{j=1}^{i}}%
%BeginExpansion
{\displaystyle\prod\limits_{j=1}^{i}}
%EndExpansion
\exp\left\{  \dfrac{1}{n-j+1}\right\}
%TCIMACRO{\dprod \limits_{j=1}^{i}}%
%BeginExpansion
{\displaystyle\prod\limits_{j=1}^{i}}
%EndExpansion
\exp\left\{  -\dfrac{\delta_{\left[  j:n\right]  }}{n-j+1}\right\}  \log
\dfrac{Z_{i:n}}{Z_{n-k:n}}}{%
%TCIMACRO{\dprod \limits_{j=1}^{n-k}}%
%BeginExpansion
{\displaystyle\prod\limits_{j=1}^{n-k}}
%EndExpansion
\exp\left\{  -\dfrac{\delta_{\left[  j:n\right]  }}{n-j+1}\right\}  },
\]
which in turn simplifies to%
\[
\dfrac{1}{n}%
%TCIMACRO{\dsum _{i=n-k+1}^{n}}%
%BeginExpansion
{\displaystyle\sum_{i=n-k+1}^{n}}
%EndExpansion
\delta_{\left[  i:n\right]  }%
%TCIMACRO{\dprod \limits_{j=1}^{i}}%
%BeginExpansion
{\displaystyle\prod\limits_{j=1}^{i}}
%EndExpansion
\exp\left\{  \dfrac{1}{n-j+1}\right\}
%TCIMACRO{\dprod \limits_{j=n-k+1}^{i}}%
%BeginExpansion
{\displaystyle\prod\limits_{j=n-k+1}^{i}}
%EndExpansion
\exp\left\{  -\dfrac{\delta_{\left[  j:n\right]  }}{n-j+1}\right\}  \log
\dfrac{Z_{i:n}}{Z_{n-k:n}}.
\]
By changing $i$ into $n-i+1$ and $j$ into $n-j+1,$ with the necessary
modifications, we end up with the following explicit formula for our new
estimator of the EVI $\gamma_{1}:$%
\[
\widehat{\gamma}_{1}=\sum_{i=1}^{k}a_{i,n}\log\dfrac{Z_{n-i+1:n}}{Z_{n-k:n}},
\]
where
\begin{equation}
a_{i,n}:=n^{-1}\delta_{\left[  n-i+1:n\right]  }%
%TCIMACRO{\dprod \limits_{j=i}^{n}}%
%BeginExpansion
{\displaystyle\prod\limits_{j=i}^{n}}
%EndExpansion
\exp\left\{  1/j\right\}
%TCIMACRO{\dprod \limits_{j=i}^{k}}%
%BeginExpansion
{\displaystyle\prod\limits_{j=i}^{k}}
%EndExpansion
\exp\left\{  -\delta_{\left[  n-j+1:n\right]  }/j\right\}  . \label{ai}%
\end{equation}
Note that for uncensored data, we have $X=Z$ and all the $\delta^{\prime}s$
are equal to $1,$ therefore $a_{i,n}\approx k^{-1},$ for $i=1,...,k.$ Indeed,
\[
a_{i,n}=\dfrac{1}{n}%
%TCIMACRO{\dprod \limits_{j=k+1}^{n}}%
%BeginExpansion
{\displaystyle\prod\limits_{j=k+1}^{n}}
%EndExpansion
\exp\left\{  \dfrac{1}{j}\right\}  \approx\dfrac{1}{n}%
%TCIMACRO{\dprod \limits_{j=k+1}^{n}}%
%BeginExpansion
{\displaystyle\prod\limits_{j=k+1}^{n}}
%EndExpansion
\left\{  1+\dfrac{1}{j}\right\}  =\dfrac{n+1}{n}\dfrac{1}{k+1}\approx\dfrac
{1}{k},\text{ as }n\rightarrow\infty,
\]
which leads to the formula of the famous Hill estimator of the EVI
\citep[][]{Hill75}. The consistency and asymptotic normality of $\widehat
{\gamma}_{1}$ are established in the following theorem.

\begin{theorem}
\label{Theorem2}Let $F$ and $G$ be two cdf's with regularly varying tails
(\ref{R-F}) such that $\gamma_{1}<\gamma_{2}.$ Let $k=k_{n}$ be an integer
sequence such that $k\rightarrow\infty$ and $k/n\rightarrow0,$ then
$\widehat{\gamma}_{1}\rightarrow\gamma_{1}$ in probability, as $n\rightarrow
\infty.$ Assume further that the second-order condition of regular variation
(\ref{second-order}) holds, then for all large $n$%
\begin{align*}
&  \sqrt{k}\left(  \widehat{\gamma}_{1}-\gamma_{1}\right)  -\frac{\sqrt
{k}A_{1}\left(  h\right)  }{1-\tau}\\
&  =\gamma\sqrt{\frac{n}{k}}\int_{0}^{1}s^{-q-1}\left(  \mathbf{W}%
_{n,2}\left(  \frac{k}{n}s\right)  +\left(  1-\frac{q}{p}\right)
\mathbf{W}_{n,1}\left(  \frac{k}{n}s\right)  \right)  ds-\gamma_{1}\sqrt
{\frac{n}{k}}\mathbf{W}_{n,1}\left(  \frac{k}{n}\right)  +o_{\mathbf{p}%
}\left(  1\right)  ,
\end{align*}
where $\mathbf{W}_{n,1}$ and $\mathbf{W}_{n,2}$ are those defined in Theorem
\ref{Theorem1}. If in addition $\sqrt{k}A_{1}\left(  h\right)  \rightarrow
\lambda,$ then
\begin{equation}
\sqrt{k}\left(  \widehat{\gamma}_{1}-\gamma_{1}\right)  \overset{\mathcal{D}%
}{\rightarrow}\mathcal{N}\left(  \dfrac{\lambda}{1-\tau},\dfrac{p}{2p-1}%
\gamma_{1}^{2}\right)  ,\text{ as }n\rightarrow\infty. \label{bias}%
\end{equation}

\end{theorem}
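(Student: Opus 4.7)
The plan is to represent $\widehat{\gamma}_1$ as a functional of the Nelson--Aalen tail product-limit process $D_n$ and then feed it through the Gaussian approximation of Theorem~\ref{Theorem1}. Starting from (\ref{gchap}), an integration by parts (legitimate because $F_n$ has been completed to $1$ beyond $Z_{n:n}$) together with the change of variable $u=x/Z_{n-k:n}$ yields the clean identity
\[
\widehat{\gamma}_1 \;=\; \int_{1}^{\infty} u^{-1}\,\frac{\overline{F}_n(uZ_{n-k:n})}{\overline{F}_n(Z_{n-k:n})}\,du.
\]
Since $\gamma_1=\int_1^\infty u^{-1-1/\gamma_1}\,du$, subtracting and multiplying by $\sqrt{k}$ gives the pivotal representation
\[
\sqrt{k}\bigl(\widehat{\gamma}_1-\gamma_1\bigr) \;=\; \int_{1}^{\infty} u^{-1}\,D_n(u)\,du.
\]

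Next I would substitute the weak approximation (\ref{TP-C1}). Choosing $\epsilon>0$ so small that $\epsilon/(p\gamma_1)<1$ and noting that $p^\gamma<1$, the weighted uniform bound applies throughout $[1,\infty)$ and the remainder integrates to $o_{\mathbf{p}}(1)$ thanks to convergence of $\int_1^\infty u^{-1-\epsilon/(p\gamma_1)}\,du$. The deterministic contribution reduces to evaluating the beta-type integral $\int_1^\infty u^{-1-1/\gamma_1}(u^{\tau/\gamma_1}-1)/(\tau/\gamma_1)\,du$, which after simplification delivers the centering constant in $\sqrt{k}A_1(h)/(1-\tau)$.

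For the Gaussian part $\int_1^\infty u^{-1}J_n(u)\,du$, I would treat $J_{1n}$ and $J_{2n}$ separately. The change of variable $s=(k/n)u^{-1/\gamma}$ converts the first summand of $J_{1n}$ into $\gamma\sqrt{n/k}\int_0^1 s^{-q-1}\mathbf{W}_{n,1}((k/n)s)\,ds$, while its second summand contributes the boundary term $-\gamma_1\sqrt{n/k}\,\mathbf{W}_{n,1}(k/n)$ via $\int_1^\infty u^{-1-1/\gamma_1}du=\gamma_1$. For $J_{2n}$, I would apply Fubini to interchange the outer $u^{-1}$ integral with the inner integral appearing in its definition; using the identities $1/\gamma-1/\gamma_1=1/\gamma_2$ and $\gamma_1/\gamma=1/p$ this brings the expression to $\sqrt{n/k}\int_1^\infty u^{1/\gamma_2-1}\{\mathbf{W}_{n,2}(\cdot)-(q/p)\mathbf{W}_{n,1}(\cdot)\}\,du$, and the same change of variable then converts it into $\gamma\sqrt{n/k}\int_0^1 s^{-q-1}\{\mathbf{W}_{n,2}((k/n)s)-(q/p)\mathbf{W}_{n,1}((k/n)s)\}\,ds$. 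Adding the two $\mathbf{W}_{n,1}$ contributions produces precisely the coefficient $(1-q/p)$ displayed in the theorem.

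Consistency follows at once, because the leading stochastic term is $O_{\mathbf{p}}(1/\sqrt{k})$ and the bias is $O(A_1(h))=o(1)$ under $k\to\infty$, $k/n\to 0$; the condition $\sqrt{k}A_1(h)\to\lambda$ is used only for the CLT. For the asymptotic normality, the approximating expression is linear in the independent Wiener processes $\mathbf{W}_{n,1}$ and $\mathbf{W}_{n,2}$ and is therefore Gaussian, so only the variance needs to be computed. I expect this to be the main obstacle: it amounts to a double integral of the covariance kernels of $\mathbf{W}_{n,1}$ and $\mathbf{W}_{n,2}$ against the singular weight $s^{-q-1}$ on $(0,1)$. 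Integrability at $0$ holds precisely because $q<1/2$ (equivalently $p>1/2$), which is exactly the hypothesis $\gamma_1<\gamma_2$ (cf.\ Remark~\ref{Rmk1}); once the two independent contributions are evaluated and combined, the limiting variance collapses to the stated $p\gamma_1^2/(2p-1)$.
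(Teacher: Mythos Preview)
Your approach is essentially the same as the paper's: represent $\widehat{\gamma}_1$ via the integral identity, write $\sqrt{k}(\widehat{\gamma}_1-\gamma_1)=\int_1^\infty x^{-1}D_n(x)\,dx$, plug in Theorem~\ref{Theorem1}, and reduce $\int_1^\infty x^{-1}J_n(x)\,dx$ to the displayed Wiener functional by the change of variable $s=x^{-1/\gamma}$ together with Fubini. Your computations of the bias integral and of the $J_{1n}$, $J_{2n}$ contributions are correct and match the paper's identity~(\ref{rep}); the variance calculation you flag as the main obstacle is exactly what the paper does at the end of its proof.

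There is one small gap. The theorem asserts consistency under the \emph{first}-order condition~(\ref{R-F}) alone, with no second-order hypothesis and no assumption on $\sqrt{k}A_1(h)$. Your route to consistency goes through Theorem~\ref{Theorem1}, which already requires the second-order condition and $\sqrt{k}A_1(h)=O(1)$; so as written you only obtain $\widehat{\gamma}_1\overset{\mathbf{p}}{\rightarrow}\gamma_1$ under these stronger hypotheses. The paper avoids this by splitting $\widehat{\gamma}_1=I_{1n}+I_{2n}$ with
\[
I_{1n}=\int_1^\infty x^{-1}\frac{\overline{F}(xZ_{n-k:n})}{\overline{F}(Z_{n-k:n})}\,dx,\qquad I_{2n}=\int_1^\infty x^{-1}\sum_{i=1}^{3}\mathbb{M}_{ni}(x)\,dx,
\]
handling $I_{1n}\overset{\mathbf{p}}{\rightarrow}\gamma_1$ directly via regular variation and Potter's inequalities, and $I_{2n}\overset{\mathbf{p}}{\rightarrow}0$ via the intermediate approximation~(\ref{aproxima-MI}), which uses only first-order regular variation (the second-order condition enters solely through $\mathbb{M}_{n4}$). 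To match the stated hypotheses you should treat consistency separately in this way rather than deducing it from the full Gaussian approximation.
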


\begin{remark}
\label{Rmk3}We clearly see that, when there is no censoring $\left(
\text{i.e. }p=1-q=1\right)  ,$ the Gaussian approximation and the limiting
distribution above perfectly agree with those of Hill's estimator \citep[see, e.g.,][page 76]{deHF06}.
\end{remark}

\begin{remark}
\label{Rmk4}Since $0<p<1$ and $\tau\leq0,$ then $1-\tau\geq1-p\tau>0.$ This
implies that the absolute value of the asymptotic bias of $\widehat{\gamma
}_{1}$ is smaller than or equal to that of $\widehat{\gamma}_{1}^{\left(
EFG\right)  }$ (see $\left(  \ref{bias}\right)  $ and $\left(  \ref{a-n-efg}%
\right)  ).$ In other words, the new estimator $\widehat{\gamma}_{1}$ is of
reduced bias compared to $\widehat{\gamma}_{1}^{\left(  EFG\right)  }.$
However, for any $1/2<p<1,$ we have $p/\left(  2p-1\right)  >1/p$ meaning that
the asymptotic variance of $\widehat{\gamma}_{1}$ is greater than that of
$\widehat{\gamma}_{1}^{\left(  EFG\right)  }.$ This seems logical, because it
is rare to reduce the asymptotic bias of an estimator without increasing its
asymptotic variance. This is the price to pay, see for instance \cite{peng98}
and \cite{BBWG-16}. We also note that in the complete data case, both
asymptotic biases coincide with that of Hill's estimator and so do the variances.
\end{remark}

\section{\textbf{Simulation study\label{sec4}}}

\noindent We mentioned in the introduction that \cite{WW2014} showed, by
simulation, that their estimators outperform the adapted Hill estimator
introduced by \cite{EnFG08}. Therefore, this study is intended for comparing
our estimator $\widehat{\gamma}_{1}$ to $\widehat{\gamma}_{1}^{\left(
W1\right)  }$and $\widehat{\gamma}_{1}^{\left(  W2\right)  },$ with respect to
biases and MSE's. It is carried out through two sets of censored and censoring
data, both drawn from the following Burr models: $\overline{F}\left(
x\right)  =\left(  1+x^{1/\eta_{1}}\right)  ^{-\eta_{1}/\gamma_{1}}$ and
$\overline{G}\left(  x\right)  =\left(  1+x^{1/\eta_{2}}\right)  ^{-\eta
_{2}/\gamma_{2}},$ $x\geq0,$ where $\eta_{1},\eta_{2},\gamma_{1},\gamma
_{2}>0.$ We fix $\eta_{1}=\eta_{2}=1/4$ and choose the values $0.2$ and $0.8$
for $\gamma_{1}.$ For the proportion of really observed extreme values, we
take $p=0.55,$ $0.70$ and $0.90$ for strong, moderate and weak censoring
respectively. For each couple $\left(  \gamma_{1},p\right)  ,$ we solve the
equation $p=\gamma_{2}/(\gamma_{1}+\gamma_{2})$ to get the pertaining
$\gamma_{2}$-value. We generate $2000$ independent replicates of size $n=300$
then $n=1000$ from both samples $\left(  X_{1},...,X_{n}\right)  $ and
$\left(  Y_{1},...,Y_{n}\right)  .$ Our overall results are taken as the
empirical means of the results obtained through all repetitions. We plot the
absolute biases and the MSE's as functions of the number $k$ of upper order
statistics used in the computation of the three estimators. The simulation
results are illustrated, for the respective $p$-values, in Figures
\ref{F300-55}, \ref{F300-70} and \ref{F300-90} for $n=300$ and in Figures
\ref{F1000-55}, \ref{F1000-70} and \ref{F1000-90} for $n=1000.$ On the light
of all the figures, it is clear that our estimator performs better than the
other two. Indeed, in (almost) each case, the minima of the absolute bias and
MSE of $\widehat{\gamma}_{1}$ are less than those of $\widehat{\gamma}%
_{1}^{\left(  W1\right)  }$and $\widehat{\gamma}_{1}^{\left(  W2\right)  }.$
In addition, our estimator reaches its minima a long way before $\widehat
{\gamma}_{1}^{\left(  W1\right)  }$and $\widehat{\gamma}_{1}^{\left(
W2\right)  }$ do, meaning that the number $k$ of extremes needed for the last
two estimators to be accurate is much larger than those needed for
$\widehat{\gamma}_{1}.$ In other words, the cost of our estimator in terms of
upper order statistics is very low compared to that of Worms and Worms estimators.

\begin{figure}[ptb]
\centering
\includegraphics[width=0.7\linewidth]{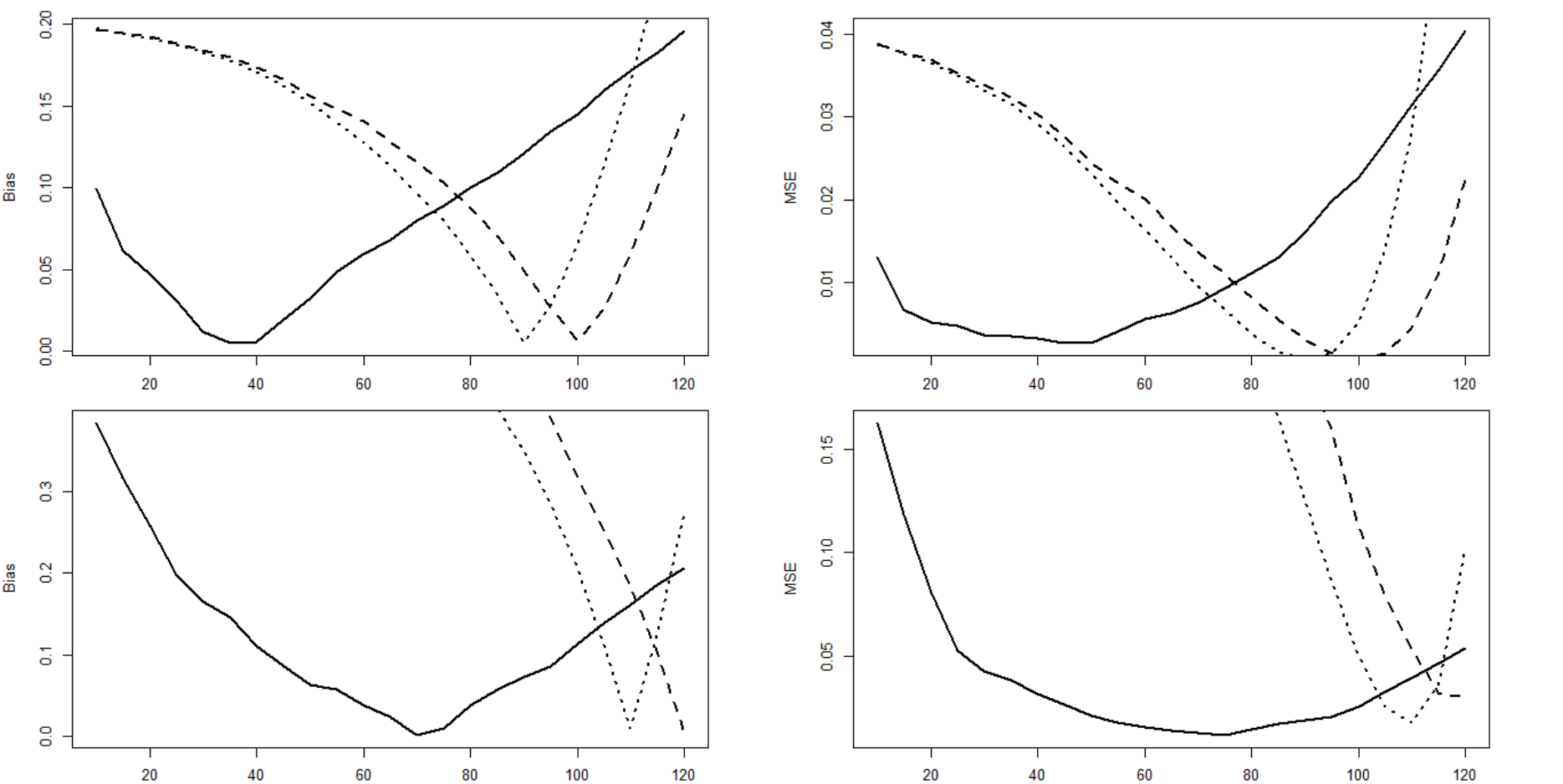}\caption{Bias (left panel) and
mse (right panel) of the estimators $\widehat{\gamma}_{1}$ (solid line),
$\widehat{\gamma}_{1}^{\left(  W1\right)  }$ (dashed line) and $\widehat
{\gamma}_{1}^{\left(  W2\right)  }$ (dotted line) of the tail index
$\gamma_{1}=0.2$ (top) and $\gamma_{1}=0.8$ (bottom) of strongly
right-censored Burr model, based on $2000$ samples of size $300.$}%
\label{F300-55}%
\end{figure}

\begin{figure}[ptb]
\centering
\includegraphics[width=0.7\linewidth]{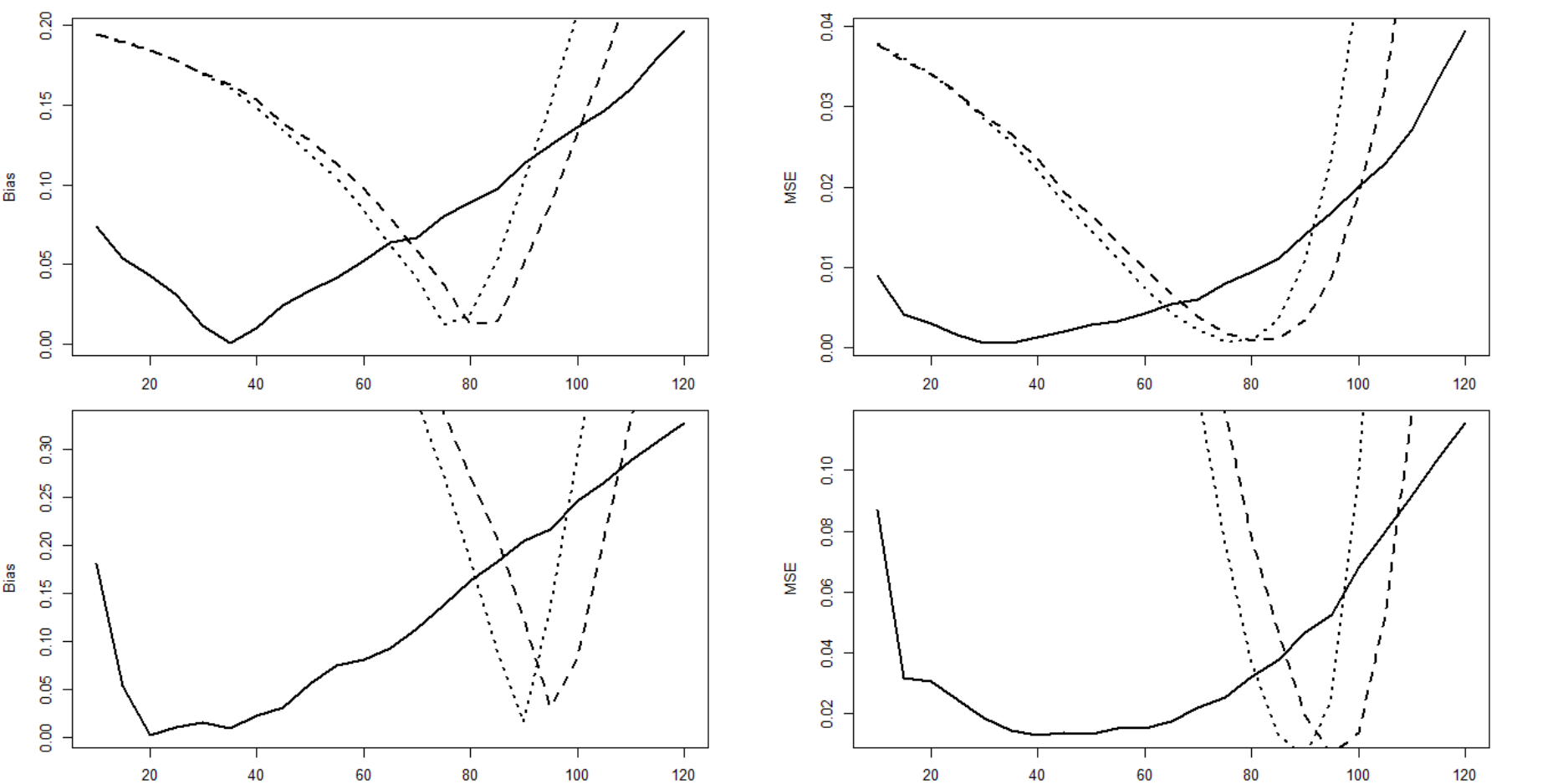}\caption{Bias (left panel) and
mse (right panel) of the estimators $\widehat{\gamma}_{1}$ (solid line),
$\widehat{\gamma}_{1}^{\left(  W1\right)  }$ (dashed line) and $\widehat
{\gamma}_{1}^{\left(  W2\right)  }$ (dotted line) of the tail index
$\gamma_{1}=0.2$ (top) and $\gamma_{1}=0.8$ (bottom) of moderately
right-censored Burr model, based on $2000$ samples of size $300.$}%
\label{F300-70}%
\end{figure}

\begin{figure}[ptb]
\centering
\includegraphics[width=0.7\linewidth]{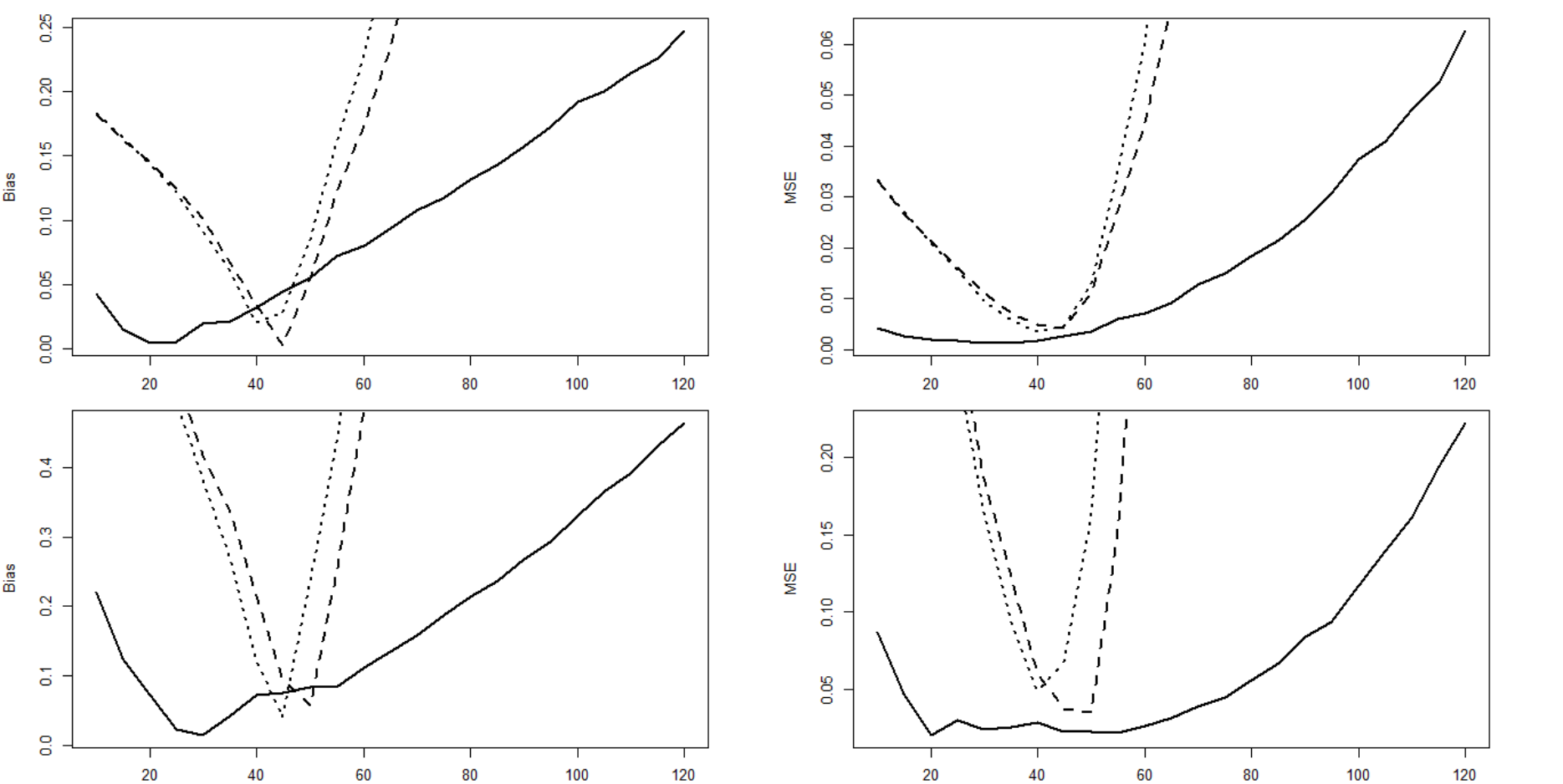}\caption{Bias (left panel) and
mse (right panel) of the estimators $\widehat{\gamma}_{1}$ (solid line),
$\widehat{\gamma}_{1}^{\left(  W1\right)  }$ (dashed line) and $\widehat
{\gamma}_{1}^{\left(  W2\right)  }$ (dotted line) of the tail index
$\gamma_{1}=0.2$ (top) and $\gamma_{1}=0.8$ (bottom) of weakly right-censored
Burr model, based on $2000$ samples of size $300.$}%
\label{F300-90}%
\end{figure}

\begin{figure}[ptb]
\centering
\includegraphics[width=0.7\linewidth]{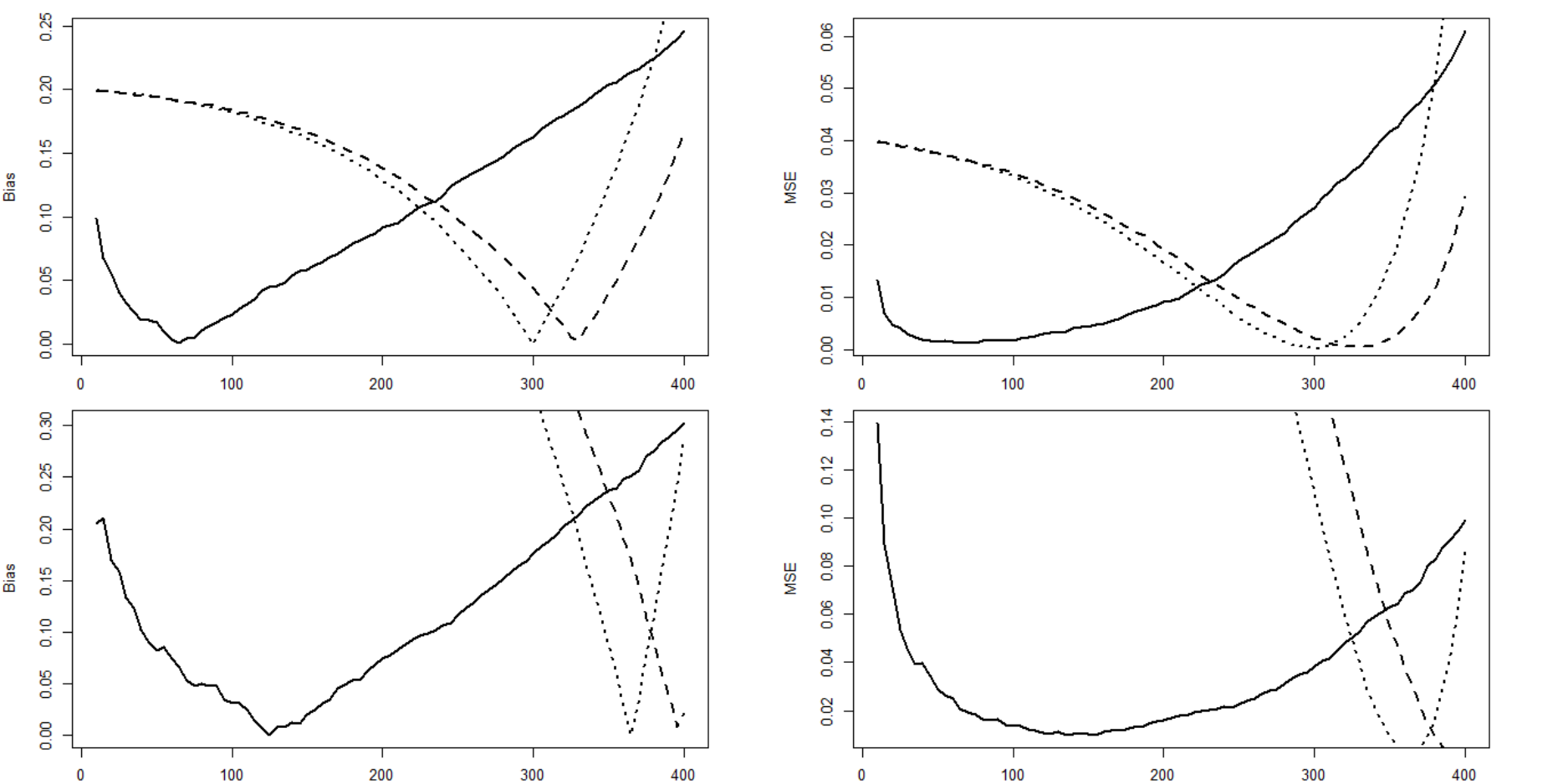}\caption{Bias (left panel) and
mse (right panel) of the estimators $\widehat{\gamma}_{1}$ (solid line),
$\widehat{\gamma}_{1}^{\left(  W1\right)  }$ (dashed line) and $\widehat
{\gamma}_{1}^{\left(  W2\right)  }$ (dotted line) of the tail index
$\gamma_{1}=0.2$ (top) and $\gamma_{1}=0.8$ (bottom) of strongly
right-censored Burr model, based on $2000$ samples of size $1000.$}%
\label{F1000-55}%
\end{figure}

\begin{figure}[ptb]
\centering
\includegraphics[width=0.7\linewidth]{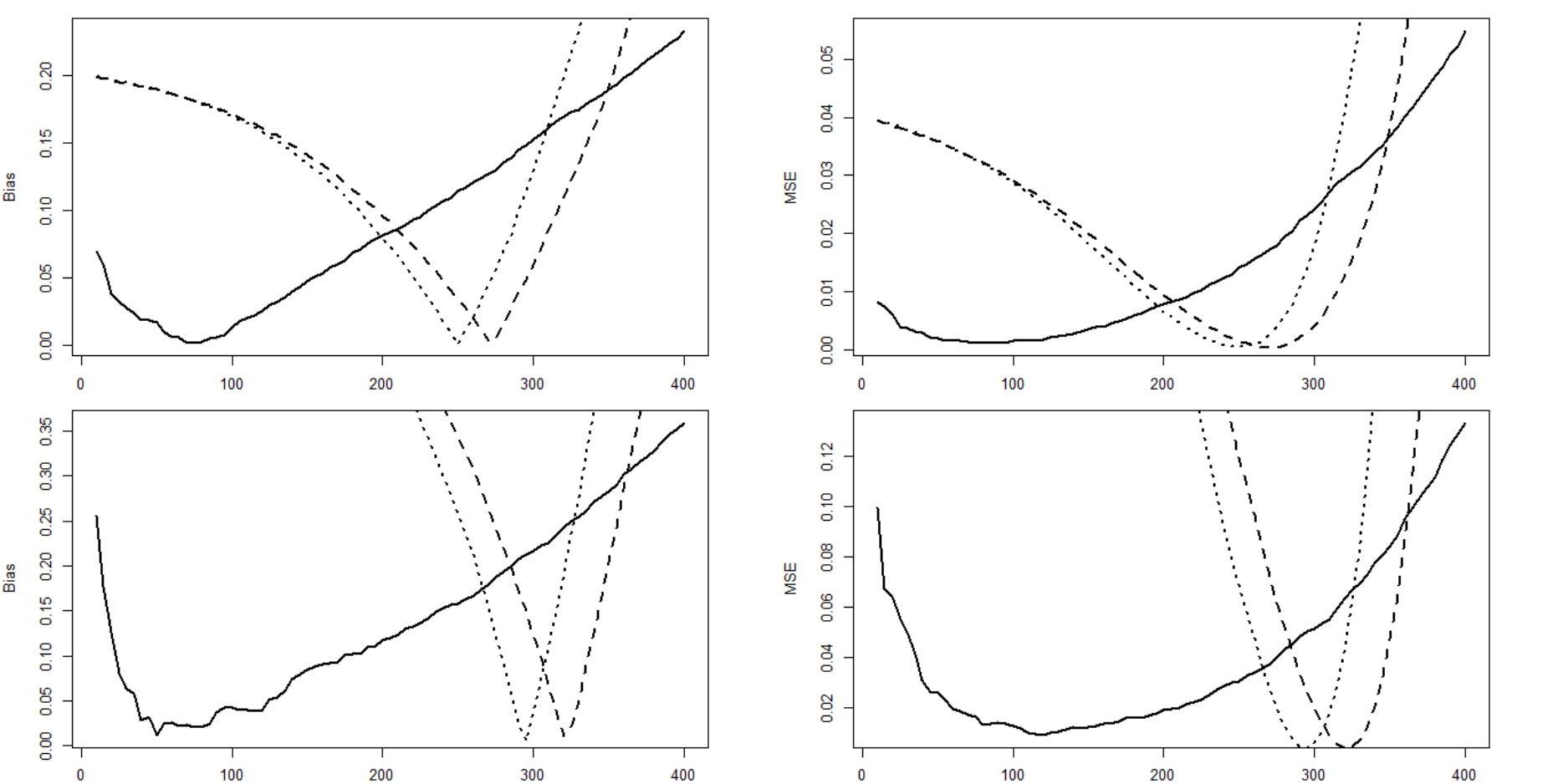}\caption{Bias (left panel) and
mse (right panel) of the estimators $\widehat{\gamma}_{1}$ (solid line),
$\widehat{\gamma}_{1}^{\left(  W1\right)  }$ (dashed line) and $\widehat
{\gamma}_{1}^{\left(  W2\right)  }$ (dotted line) of the tail index
$\gamma_{1}=0.2$ (top) and $\gamma_{1}=0.8$ (bottom) of moderately
right-censored Burr model, based on $2000$ samples of size $1000.$}%
\label{F1000-70}%
\end{figure}

\begin{figure}[ptb]
\centering
\includegraphics[width=0.7\linewidth]{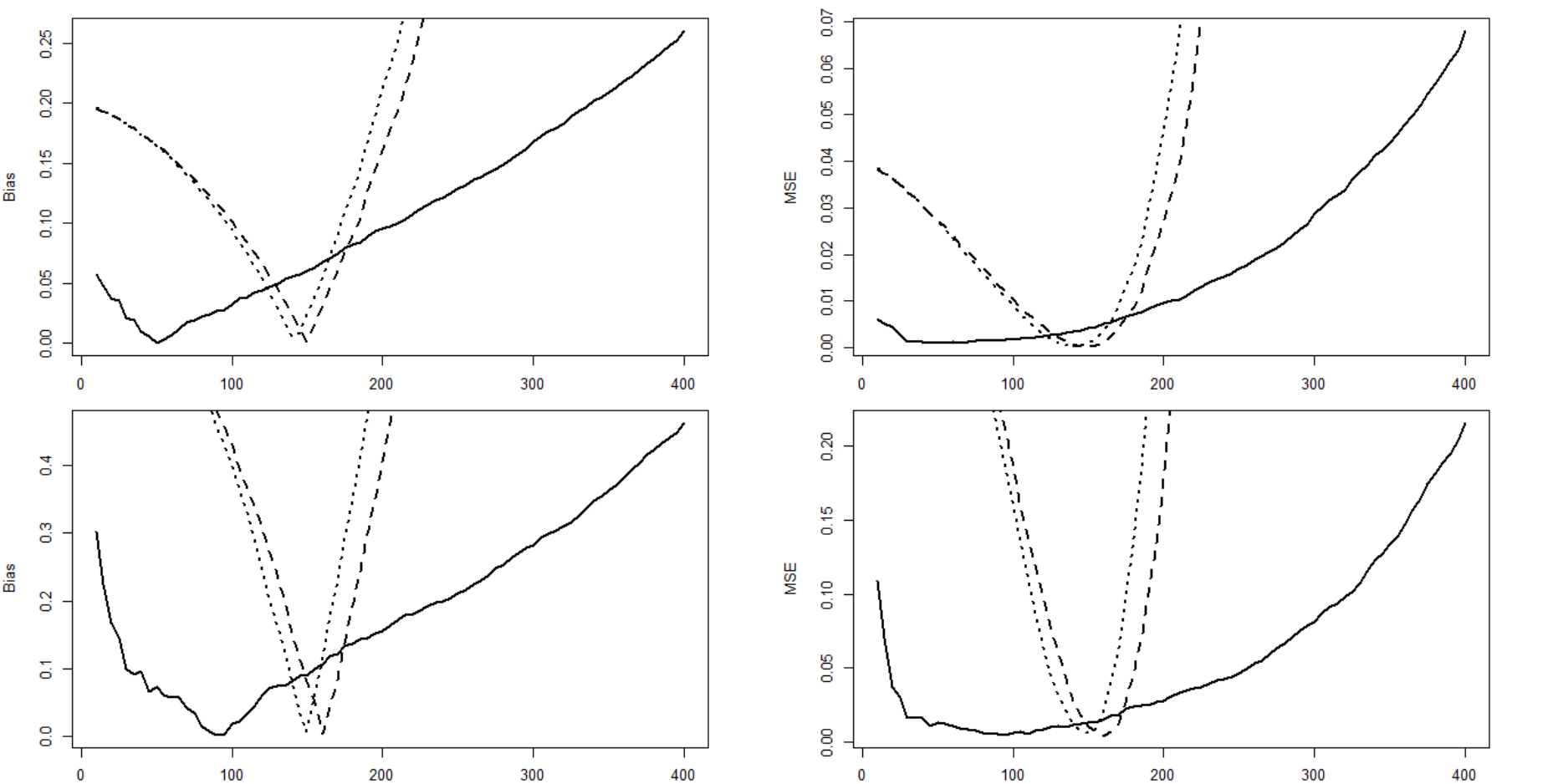}\caption{Bias (left panel) and
mse (right panel) of the estimators $\widehat{\gamma}_{1}$ (solid line),
$\widehat{\gamma}_{1}^{\left(  W1\right)  }$ (dashed line) and $\widehat
{\gamma}_{1}^{\left(  W2\right)  }$ (dotted line) of the tail index
$\gamma_{1}=0.2$ (top) and $\gamma_{1}=0.8$ (bottom) of weakly right-censored
Burr model, based on $2000$ samples of size $1000.$}%
\label{F1000-90}%
\end{figure}

\section{Proofs\textbf{\label{sec5}}}

\noindent In the sequel, we will use the following two empirical processes
$\sqrt{n}\left(  \overline{H}_{n}^{\left(  j\right)  }\left(  z\right)
-\overline{H}^{\left(  j\right)  }\left(  z\right)  \right)  ,$ $j=0,1;$
$z>0,$ which may be represented, almost surely, by two uniform empirical ones.
Indeed, let us consider the independent and identically distributed (iid)
$(0,1)$-uniform rv's $U_{i}:=\delta_{i}H^{\left(  1\right)  }\left(
Z_{i}\right)  +\left(  1-\delta_{i}\right)  \left(  \theta+H^{\left(
0\right)  }\left(  Z_{i}\right)  \right)  ,$ $i=1,...,n,$ defined in
\citep[see][]{EnKo92}. The empirical cdf and the uniform empirical process
based upon these rv's are respectively denoted by%
\begin{equation}
\mathbb{U}_{n}\left(  s\right)  :=\#\left\{  i:1\leq i\leq n,\text{ }U_{i}\leq
s\right\}  /n\text{ and }\alpha_{n}\left(  s\right)  :=\sqrt{n}\left(
\mathbb{U}_{n}\left(  s\right)  -s\right)  ,\text{ }0\leq s\leq1.\label{ecdf}%
\end{equation}
\cite{DeEn96} state that almost surely (a.s.)%
\begin{equation}
H_{n}^{\left(  1\right)  }\left(  z\right)  =\mathbb{U}_{n}\left(  H^{\left(
1\right)  }\left(  z\right)  \right)  \text{ and }H_{n}^{\left(  0\right)
}\left(  z\right)  =\mathbb{U}_{n}\left(  H^{\left(  0\right)  }\left(
z\right)  +\theta\right)  -\mathbb{U}_{n}\left(  \theta\right)  ,\label{HN1}%
\end{equation}
for $0<H^{\left(  1\right)  }\left(  z\right)  <\theta$ and $0<H^{\left(
0\right)  }\left(  z\right)  <1-\theta.$ It is easy to verify that a.s.%
\begin{equation}
\sqrt{n}\left(  \overline{H}_{n}^{\left(  1\right)  }\left(  z\right)
-\overline{H}^{\left(  1\right)  }\left(  z\right)  \right)  =\alpha
_{n}\left(  \theta\right)  -\alpha_{n}\left(  \theta-\overline{H}^{\left(
1\right)  }\left(  z\right)  \right)  ,\text{ for }0<\overline{H}^{\left(
1\right)  }\left(  z\right)  <\theta,\label{rep-H1}%
\end{equation}
and%
\begin{equation}
\sqrt{n}\left(  \overline{H}_{n}^{\left(  0\right)  }\left(  z\right)
-\overline{H}^{\left(  0\right)  }\left(  z\right)  \right)  =-\alpha
_{n}\left(  1-\overline{H}^{\left(  0\right)  }\left(  z\right)  \right)
,\text{ for }0<\overline{H}^{\left(  0\right)  }\left(  z\right)
<1-\theta.\label{rep-H0}%
\end{equation}
Our methodology strongly relies on the well-known Gaussian approximation,
given by \cite{CsCsHM86} in Corollary 2.1. It says that on the probability
space $\left(  \Omega,\mathcal{A},\mathbf{P}\right)  ,$ there exists a
sequence of Brownian bridges $\left\{  B_{n}\left(  s\right)  ;\text{ }0\leq
s\leq1\right\}  $ such that, for every $1/4<\eta<1/2,$ we have%
\begin{equation}
\sup_{1/n\leq s\leq1-1/n}\frac{\left\vert \alpha_{n}\left(  s\right)
-B_{n}\left(  s\right)  \right\vert }{\left[  s\left(  1-s\right)  \right]
^{\eta}}=O_{\mathbf{p}}\left(  n^{\eta-1/2}\right)  .\label{approx2}%
\end{equation}
For the increments $\alpha_{n}\left(  \theta\right)  -\alpha_{n}\left(
\theta-s\right)  ,$ we will need an approximation of the same type as $\left(
\ref{approx2}\right)  .$ Following similar arguments, mutatis mutandis, as
those used in the proofs of assertions (2.2) of Theorem 2.1 and (2.8) of
Theorem 2.2 in \cite{CsCsHM86}, we may show that, for every $0<\theta<1,$ we
have%
\begin{equation}
\sup_{1/n\leq s\leq\theta}\frac{\left\vert \left\{  \alpha_{n}\left(
\theta\right)  -\alpha_{n}\left(  \theta-s\right)  \right\}  -\left\{
B_{n}\left(  \theta\right)  -B_{n}\left(  \theta-s\right)  \right\}
\right\vert }{s^{\eta}}=O_{\mathbf{p}}\left(  n^{\eta-1/2}\right)
.\label{approx1}%
\end{equation}

\subsection{Proof of Theorem \ref{Theorem1}}

\noindent The proof is carried out through two steps. First, we asymptotically
represent $D_{n}\left(  x\right)  $ is terms of the empirical cdf $H_{n}%
$\textbf{ }and the empirical sub-distributions functions $H_{n}^{\left(
0\right)  }$ and\textbf{ }$H_{n}^{\left(  1\right)  }.$ Second, we rewrite it
as a functional of the following two processes%
\begin{equation}
\beta_{n}\left(  w\right)  :=\sqrt{\frac{n}{k}}\left\{  \alpha_{n}\left(
\theta\right)  -\alpha_{n}\left(  \theta-\overline{H}^{\left(  1\right)
}\left(  wZ_{n-k:n}\right)  \right)  \right\}  ,\text{ for }0<\overline
{H}^{\left(  1\right)  }\left(  wZ_{n-k:n}\right)  <\theta, \label{betan}%
\end{equation}
and%
\begin{equation}
\widetilde{\beta}_{n}\left(  w\right)  :=-\sqrt{\frac{n}{k}}\alpha_{n}\left(
1-\overline{H}^{\left(  0\right)  }\left(  wZ_{n-k:n}\right)  \right)  ,\text{
for }0<\overline{H}^{\left(  0\right)  }\left(  wZ_{n-k:n}\right)  <1-\theta,
\label{beta-tild}%
\end{equation}
in order to apply the weak approximations $\left(  \ref{approx2}\right)  $ and
$\left(  \ref{approx1}\right)  .$ We begin, as in the proof of Theorem 2.1 in
\cite{BMN2016}, by decomposing $k^{-1/2}D_{n}\left(  x\right)  $ into the sum
of%
\[
\mathbb{M}_{n1}\left(  x\right)  :=\dfrac{\overline{F}_{n}\left(
xZ_{n-k:n}\right)  -\overline{F}\left(  xZ_{n-k:n}\right)  }{\overline
{F}\left(  Z_{n-k:n}\right)  },
\]%
\[
\mathbb{M}_{n2}\left(  x\right)  :=\left(  \frac{\overline{F}\left(
Z_{n-k:n}\right)  }{\overline{F}_{n}\left(  Z_{n-k:n}\right)  }-1\right)
\frac{\overline{F}_{n}\left(  xZ_{n-k:n}\right)  -\overline{F}\left(
xZ_{n-k:n}\right)  }{\overline{F}\left(  Z_{n-k:n}\right)  },
\]%
\[
\mathbb{M}_{n3}\left(  x\right)  :=-\frac{\overline{F}\left(  xZ_{n-k:n}%
\right)  }{\overline{F}\left(  Z_{n-k:n}\right)  }\frac{\overline{F}%
_{n}\left(  Z_{n-k:n}\right)  -\overline{F}\left(  Z_{n-k:n}\right)
}{\overline{F}_{n}\left(  Z_{n-k:n}\right)  },
\]
and%
\[
\mathbb{M}_{n4}\left(  x\right)  :=\left(  \dfrac{\overline{F}\left(
xZ_{n-k:n}\right)  }{\overline{F}\left(  Z_{n-k:n}\right)  }-\dfrac
{\overline{F}\left(  xh\right)  }{\overline{F}\left(  h\right)  }\right)
+\left(  \dfrac{\overline{F}\left(  xh\right)  }{\overline{F}\left(  h\right)
}-x^{-1/\gamma_{1}}\right)  .
\]
In the following two subsections, we show that, uniformly on $x\geq p^{\gamma
},$ for any $1/4<\eta<p/2$ and small $0<\epsilon_{0}<1,$ we have%
\begin{equation}
\sum_{i=1}^{3}\sqrt{k}\mathbb{M}_{ni}\left(  x\right)  -J_{n}\left(  x\right)
=o_{\mathbf{p}}\left(  x^{\left(  2\eta-p\right)  /\gamma\pm\epsilon_{0}%
}\right)  , \label{aproxima-MI}%
\end{equation}
where $J_{n}\left(  x\right)  $ is the Gaussian process defined in Theorem
\ref{Theorem1}. It is worth mentioning that both $\mathbb{M}_{n2}\left(
x\right)  $ and $\mathbb{M}_{n3}\left(  x\right)  $ may be rewritten in terms
of $\mathbb{M}_{n1}\left(  x\right)  .$ Indeed, it is readily checked that%
\[
\mathbb{M}_{n2}\left(  x\right)  =\left(  \frac{\overline{F}\left(
Z_{n-k:n}\right)  }{\overline{F}_{n}\left(  Z_{n-k:n}\right)  }-1\right)
\mathbb{M}_{n1}\left(  x\right)  \text{ and }\mathbb{M}_{n3}\left(  x\right)
=-\dfrac{\overline{F}\left(  xZ_{n-k:n}\right)  }{\overline{F}_{n}\left(
Z_{n-k:n}\right)  }\mathbb{M}_{n1}\left(  1\right)  .
\]
Then, we only focus on the weak approximation $\sqrt{k}\mathbb{M}_{n1}\left(
x\right)  $ which will lead to that of $\sqrt{k}\mathbb{M}_{n3}\left(
x\right)  $ and the asymptotic negligibility (in probability) of $\sqrt
{k}\mathbb{M}_{n2}\left(  x\right)  .$ Finally, the approximation of $\sqrt
{k}M_{n4}\left(  x\right)  $ will result in the asymptotic bias. In
conclusion, we might say that representing $D_{n}\left(  x\right)  $ amounts
to representing $\mathbb{M}_{n1}\left(  x\right)  .$

\subsubsection{\textbf{Representation of }$\mathbb{M}_{n1}\left(  x\right)
$\textbf{ in terms of }$H_{n},$\textbf{ }$H_{n}^{\left(  0\right)  }$\textbf{
and }$H_{n}^{\left(  1\right)  }$}

We show that, for all large $n$ and $x\geq p^{\gamma}$%
\begin{equation}
\mathbb{M}_{n1}\left(  x\right)  =\mathbb{T}_{n1}\left(  x\right)
+\mathbb{T}_{n2}\left(  x\right)  +\mathbb{T}_{n3}\left(  x\right)
+R_{n}\left(  x\right)  , \label{MN1-decompos}%
\end{equation}
where%
\[
\mathbb{T}_{n1}\left(  x\right)  :=\int_{xZ_{n-k:n}}^{\infty}\frac
{\overline{F}\left(  w\right)  }{\overline{F}\left(  Z_{n-k:n}\right)  }%
\frac{d\left(  H_{n}^{\left(  1\right)  }\left(  w\right)  -H^{\left(
1\right)  }\left(  w\right)  \right)  }{\overline{H}\left(  w\right)  },
\]%
\[
\mathbb{T}_{n2}\left(  x\right)  :=\int_{xZ_{n-k:n}}^{\infty}\frac
{\overline{F}\left(  w\right)  }{\overline{F}\left(  Z_{n-k:n}\right)
}\left\{  \int_{0}^{w}\frac{d\left(  H_{n}^{\left(  0\right)  }\left(
v\right)  -H^{\left(  0\right)  }\left(  v\right)  \right)  }{\overline
{H}\left(  v\right)  }\right\}  dH^{\left(  1\right)  }\left(  w\right)  ,
\]%
\[
\mathbb{T}_{n3}\left(  x\right)  :=\int_{xZ_{n-k:n}}^{\infty}\frac
{\overline{F}\left(  w\right)  }{\overline{F}\left(  Z_{n-k:n}\right)
}\left\{  \int_{0}^{w}\frac{H_{n}\left(  v\right)  -H\left(  v\right)
}{\overline{H}^{2}\left(  v\right)  }dH^{\left(  0\right)  }\left(  v\right)
\right\}  dH^{\left(  1\right)  }\left(  w\right)  ,
\]
and $R_{n}\left(  x\right)  :=O_{\mathbf{p}}\left(  k^{-2\eta\pm\epsilon_{0}%
}\right)  x^{\left(  2\eta-p\right)  /\gamma\pm\epsilon_{0}},$ for every
$1/4<\eta<p/2$ and small $0<\epsilon_{0}<1.$ For notational simplicity, we
write $b^{\pm\epsilon_{0}}:=\max\left(  b^{\epsilon_{0}},b^{-\epsilon_{0}%
}\right)  $ and without loss of generality, we attribute $\epsilon_{0}$ to any
constant times $\epsilon_{0}$ and $b^{\pm\epsilon_{0}}$ to any linear
combinations of $b^{\pm c_{1}\epsilon_{0}}$ and $b^{\pm c_{2}\epsilon_{0}}%
,\ $for every $c_{1},c_{2}>0.$ We begin by letting $\mathcal{\ell}\left(
w\right)  :=\overline{F}\left(  w\right)  /\overline{H}\left(  w\right)  $
which may be rewritten into $\exp\left\{  \int_{0}^{w}dH^{\left(  0\right)
}\left(  v\right)  /\overline{H}\left(  v\right)  \right\}  $ whose empirical
counterpart is equal to
\[
\mathcal{\ell}_{n}\left(  w\right)  :=\exp\left\{  \int_{0}^{w}dH_{n}^{\left(
0\right)  }\left(  v\right)  /\overline{H}_{n}\left(  v\right)  \right\}  .
\]
Observe that $\overline{F}\left(  xZ_{n-k:n}\right)  =\int_{xZ_{n-k:n}%
}^{\infty}\mathcal{\ell}\left(  w\right)  dH^{\left(  1\right)  }\left(
w\right)  ,$ then by replacing $\mathcal{\ell}$ and $H^{\left(  1\right)  }$
by $\mathcal{\ell}_{n}$ and $H_{n}^{\left(  1\right)  }$ respectively, we
obtain $\overline{F}_{n}\left(  xZ_{n-k:n}\right)  =\int_{xZ_{n-k:n}}^{\infty
}\mathcal{\ell}_{n}\left(  w\right)  dH_{n}^{\left(  1\right)  }\left(
w\right)  .$ Thus%
\begin{equation}
\mathbb{M}_{n1}\left(  x\right)  =\int_{xZ_{n-k:n}}^{\infty}\frac
{\mathcal{\ell}_{n}\left(  w\right)  }{\overline{F}\left(  Z_{n-k:n}\right)
}dH_{n}^{\left(  1\right)  }\left(  w\right)  -\int_{xZ_{n-k:n}}^{\infty}%
\frac{\mathcal{\ell}\left(  w\right)  }{\overline{F}\left(  Z_{n-k:n}\right)
}dH^{\left(  1\right)  }\left(  w\right)  . \label{M}%
\end{equation}
By applying Taylor's expansion, we may rewrite $\mathcal{\ell}_{n}\left(
w\right)  -\mathcal{\ell}\left(  w\right)  $ into%
\[
\mathcal{\ell}\left(  w\right)  \left\{  \int_{0}^{w}\frac{dH_{n}^{\left(
0\right)  }\left(  v\right)  }{\overline{H}_{n}\left(  v\right)  }-\int
_{0}^{w}\frac{dH^{\left(  0\right)  }\left(  v\right)  }{\overline{H}\left(
v\right)  }\right\}  +\frac{1}{2}\widetilde{\mathcal{\ell}}_{n}\left(
w\right)  \left\{  \int_{0}^{w}\frac{dH_{n}^{\left(  0\right)  }\left(
v\right)  }{\overline{H}_{n}\left(  v\right)  }-\int_{0}^{w}\frac{dH^{\left(
0\right)  }\left(  v\right)  }{\overline{H}\left(  v\right)  }\right\}  ^{2},
\]
where $\widetilde{\mathcal{\ell}}_{n}\left(  w\right)  $ is a stochastic
intermediate value lying between $\mathcal{\ell}\left(  w\right)  $ and
$\mathcal{\ell}_{n}\left(  w\right)  .$ This allows us to decompose
$\mathbb{M}_{n1}\left(  x\right)  ,$ in $\left(  \ref{M}\right)  ,$ into the
sum of%
\[
\mathbb{T}_{n1}^{\ast}\left(  x\right)  :=\int_{xZ_{n-k:n}}^{\infty}%
\frac{\mathcal{\ell}\left(  w\right)  }{\overline{F}\left(  Z_{n-k:n}\right)
}d\left(  H_{n}^{\left(  1\right)  }\left(  w\right)  -H^{\left(  1\right)
}\left(  w\right)  \right)  ,
\]%
\[
\mathbb{T}_{n2}^{\ast}\left(  x\right)  :=\int_{xZ_{n-k:n}}^{\infty}%
\frac{\mathcal{\ell}\left(  w\right)  }{\overline{F}\left(  Z_{n-k:n}\right)
}\left\{  \int_{0}^{w}\frac{d\left(  H_{n}^{\left(  0\right)  }\left(
v\right)  -H^{\left(  0\right)  }\left(  v\right)  \right)  }{\overline
{H}\left(  v\right)  }\right\}  dH_{n}^{\left(  1\right)  }\left(  w\right)
,
\]%
\[
\mathbb{T}_{n3}^{\ast}\left(  x\right)  :=\int_{xZ_{n-k:n}}^{\infty}%
\frac{\mathcal{\ell}\left(  w\right)  }{\overline{F}\left(  Z_{n-k:n}\right)
}\left\{  \int_{0}^{w}\frac{\overline{H}\left(  v\right)  -\overline{H}%
_{n}\left(  v\right)  }{\overline{H}^{2}\left(  v\right)  }dH_{n}^{\left(
0\right)  }\left(  v\right)  \right\}  dH_{n}^{\left(  1\right)  }\left(
w\right)  ,
\]%
\[
R_{n1}\left(  x\right)  :=\int_{xZ_{n-k:n}}^{\infty}\frac{\mathcal{\ell
}\left(  w\right)  }{\overline{F}\left(  Z_{n-k:n}\right)  }\left\{  \int
_{0}^{w}\frac{\left(  \overline{H}\left(  v\right)  -\overline{H}_{n}\left(
v\right)  \right)  ^{2}}{\overline{H}_{n}\left(  v\right)  \overline{H}%
^{2}\left(  v\right)  }dH_{n}^{\left(  0\right)  }\left(  w\right)  \right\}
dH_{n}^{\left(  1\right)  }\left(  w\right)  ,
\]
and%
\[
R_{n2}\left(  x\right)  :=\dfrac{1}{2}%
%TCIMACRO{\dint _{xZ_{n-k:n}}^{\infty}}%
%BeginExpansion
{\displaystyle\int_{xZ_{n-k:n}}^{\infty}}
%EndExpansion
\dfrac{\widetilde{\mathcal{\ell}}_{n}\left(  w\right)  }{\overline{F}\left(
Z_{n-k:n}\right)  }\left\{
%TCIMACRO{\dint _{0}^{w}}%
%BeginExpansion
{\displaystyle\int_{0}^{w}}
%EndExpansion
\dfrac{dH_{n}^{\left(  0\right)  }\left(  v\right)  }{\overline{H}_{n}\left(
v\right)  }-%
%TCIMACRO{\dint _{0}^{w}}%
%BeginExpansion
{\displaystyle\int_{0}^{w}}
%EndExpansion
\dfrac{dH^{\left(  0\right)  }\left(  v\right)  }{\overline{H}\left(
v\right)  }\right\}  ^{2}dH_{n}^{\left(  1\right)  }\left(  w\right)  .
\]
First, we clearly see that $\mathbb{T}_{n1}^{\ast}\left(  x\right)
\equiv\mathbb{T}_{n1}\left(  x\right)  .$ Next, we show that $\mathbb{T}%
_{n2}^{\ast}\left(  x\right)  $ and $\mathbb{T}_{n3}^{\ast}\left(  x\right)  $
are approximations of $\mathbb{T}_{n2}\left(  x\right)  $ and $\mathbb{T}%
_{n3}\left(  x\right)  $ respectively, while $R_{ni}\left(  x\right)
=O_{\mathbf{p}}\left(  k^{-2\eta\pm\epsilon_{0}}\right)  x^{\left(
2\eta-p\right)  /\gamma\pm\epsilon_{0}},$ $i=1,2,$ as $n\rightarrow\infty,$
for any $x\geq p^{\gamma}.$ Since, from representation $\left(  \ref{HN1}%
\right)  ,$ we have $\overline{H}_{n}^{\left(  1\right)  }\left(  w\right)
=\mathbb{U}_{n}\left(  \overline{H}^{\left(  1\right)  }\left(  w\right)
\right)  $ a.s., then without loss of generality, we may write%
\[
\mathbb{T}_{n2}^{\ast}\left(  x\right)  =\int_{xZ_{n-k:n}}^{\infty}%
\frac{\mathcal{\ell}\left(  w\right)  }{\overline{F}\left(  Z_{n-k:n}\right)
}\left\{  \int_{0}^{w}\frac{d\left(  H_{n}^{\left(  0\right)  }\left(
v\right)  -H^{\left(  0\right)  }\left(  v\right)  \right)  }{\overline
{H}\left(  v\right)  }\right\}  d\mathbb{U}_{n}\left(  \overline{H}^{\left(
1\right)  }\left(  w\right)  \right)  .
\]
Let $Q^{\left(  1\right)  }\left(  u\right)  :=\inf\left\{  w:\overline
{H}^{\left(  1\right)  }\left(  w\right)  >u\right\}  ,$ $0<u<\theta,$ and set
$t=\mathbb{U}_{n}\left(  \overline{H}^{\left(  1\right)  }\left(  w\right)
\right)  $ or, in other words, $w=Q^{\left(  1\right)  }\left(  \mathbb{V}%
_{n}\left(  t\right)  \right)  ,$ where $\mathbb{V}_{n}$ denotes the empirical
quantile function pertaining to $\mathbb{U}_{n}.$ By using this change of
variables, we get%
\[
\mathbb{T}_{n2}^{\ast}=\int_{0}^{\overline{H}_{n}^{\left(  1\right)  }\left(
xZ_{n-k:n}\right)  }\frac{\mathcal{L}\left(  \mathbb{V}_{n}\left(  t\right)
\right)  }{\overline{F}\left(  Z_{n-k:n}\right)  }\left\{  \int_{0}%
^{Q^{\left(  1\right)  }\left(  \mathbb{V}_{n}\left(  t\right)  \right)
}\frac{d\left(  \overline{H}_{n}^{\left(  0\right)  }\left(  v\right)
-\overline{H}^{\left(  0\right)  }\left(  v\right)  \right)  }{\overline
{H}\left(  v\right)  }\right\}  dt,
\]
where, for notational simplicity, we set $\mathcal{L}\left(  s\right)
:=\mathcal{\ell}\left(  Q^{\left(  1\right)  }\left(  s\right)  \right)  .$
Now, we decompose $\mathbb{T}_{n2}^{\ast}\left(  x\right)  $ into the sum of%
\[
A_{n1}\left(  x\right)  :=\int_{\overline{H}^{\left(  1\right)  }\left(
xZ_{n-k:n}\right)  }^{\overline{H}_{n}^{\left(  1\right)  }\left(
xZ_{n-k:n}\right)  }\frac{\mathcal{L}\left(  \mathbb{V}_{n}\left(  t\right)
\right)  }{\overline{F}\left(  Z_{n-k:n}\right)  }\int_{0}^{Q^{\left(
1\right)  }\left(  \mathbb{V}_{n}\left(  t\right)  \right)  }\frac{d\left(
\overline{H}_{n}^{\left(  0\right)  }\left(  v\right)  -\overline{H}^{\left(
0\right)  }\left(  v\right)  \right)  }{\overline{H}\left(  v\right)  }dt,
\]%
\[
A_{n2}\left(  x\right)  :=\int_{0}^{\overline{H}^{\left(  1\right)  }\left(
xZ_{n-k:n}\right)  }\frac{\mathcal{L}\left(  \mathbb{V}_{n}\left(  t\right)
\right)  -\mathcal{L}\left(  t\right)  }{\overline{F}\left(  Z_{n-k:n}\right)
}\int_{0}^{Q^{\left(  1\right)  }\left(  \mathbb{V}_{n}\left(  t\right)
\right)  }\frac{d\left(  \overline{H}_{n}^{\left(  0\right)  }\left(
v\right)  -\overline{H}^{\left(  0\right)  }\left(  v\right)  \right)
}{\overline{H}\left(  v\right)  }dt,
\]%
\[
A_{n3}\left(  x\right)  :=\int_{0}^{\overline{H}^{\left(  1\right)  }\left(
xZ_{n-k:n}\right)  }\frac{\mathcal{L}\left(  t\right)  }{\overline{F}\left(
Z_{n-k:n}\right)  }\int_{Q^{\left(  1\right)  }\left(  t\right)  }^{Q^{\left(
1\right)  }\left(  \mathbb{V}_{n}\left(  t\right)  \right)  }\frac{d\left(
\overline{H}_{n}^{\left(  0\right)  }\left(  v\right)  -\overline{H}^{\left(
0\right)  }\left(  v\right)  \right)  }{\overline{H}\left(  v\right)  }dt,
\]
and%
\[
A_{n4}\left(  x\right)  :=\int_{0}^{\overline{H}^{\left(  1\right)  }\left(
xZ_{n-k:n}\right)  }\frac{\mathcal{L}\left(  t\right)  }{\overline{F}\left(
Z_{n-k:n}\right)  }\int_{0}^{Q^{\left(  1\right)  }\left(  t\right)  }%
\frac{d\left(  \overline{H}_{n}^{\left(  0\right)  }\left(  v\right)
-\overline{H}^{\left(  0\right)  }\left(  v\right)  \right)  }{\overline
{H}\left(  v\right)  }dt.
\]
It is clear that the change of variables $w=Q^{\left(  1\right)  }\left(
t\right)  $ yields that $A_{n4}\left(  x\right)  =\mathbb{T}_{n2}\left(
x\right)  .$ Hence, one has to show that $A_{ni}\left(  x\right)
=O_{\mathbf{p}}\left(  k^{-2\eta\pm\epsilon}\right)  x^{\left(  2\eta
-p\right)  /\gamma\pm\epsilon_{0}},$ $i=1,2,3,$ as $n\rightarrow\infty,$
uniformly on $x\geq p^{\gamma}.$ We begin by $A_{n1}\left(  x\right)  $ for
which an integration by parts gives%
\[
A_{n1}\left(  x\right)  :=\int_{\overline{H}^{\left(  1\right)  }\left(
xZ_{n-k:n}\right)  }^{\overline{H}_{n}^{\left(  1\right)  }\left(
xZ_{n-k:n}\right)  }\frac{\mathcal{L}\left(  \mathbb{V}_{n}\left(  t\right)
\right)  }{\overline{F}\left(  Z_{n-k:n}\right)  }\int_{0}^{Q^{\left(
1\right)  }\left(  \mathbb{V}_{n}\left(  t\right)  \right)  }\frac{d\left(
\overline{H}_{n}^{\left(  0\right)  }\left(  v\right)  -\overline{H}^{\left(
0\right)  }\left(  v\right)  \right)  }{\overline{H}\left(  v\right)  }dt,
\]%
\begin{align*}
A_{n1}\left(  x\right)   &  =\int_{\overline{H}^{\left(  1\right)  }\left(
xZ_{n-k:n}\right)  }^{\overline{H}_{n}^{\left(  1\right)  }\left(
xZ_{n-k:n}\right)  }\frac{\mathcal{L}\left(  \mathbb{V}_{n}\left(  t\right)
\right)  }{\overline{F}\left(  Z_{n-k:n}\right)  }\left\{  \frac{\overline
{H}_{n}^{\left(  0\right)  }\left(  Q^{\left(  1\right)  }\left(
\mathbb{V}_{n}\left(  t\right)  \right)  \right)  -\overline{H}^{\left(
0\right)  }\left(  Q^{\left(  1\right)  }\left(  \mathbb{V}_{n}\left(
t\right)  \right)  \right)  }{\overline{H}\left(  Q^{\left(  1\right)
}\left(  \mathbb{V}_{n}\left(  t\right)  \right)  \right)  }\right. \\
&  \ \ \ \ \left.  -%
%TCIMACRO{\dint _{0}^{Q^{\left(  1\right)  }\left(  \mathbb{V}_{n}\left(
%t\right)  \right)  }}%
%BeginExpansion
{\displaystyle\int_{0}^{Q^{\left(  1\right)  }\left(  \mathbb{V}_{n}\left(
t\right)  \right)  }}
%EndExpansion
\frac{\overline{H}_{n}^{\left(  0\right)  }\left(  v\right)  -\overline
{H}^{\left(  0\right)  }\left(  v\right)  }{\overline{H}\left(  v\right)
}d\overline{H}\left(  v\right)  \right\}  dt.
\end{align*}
If $I_{n}\left(  x\right)  $ denotes the interval of endpoints $\overline
{H}^{\left(  1\right)  }\left(  xZ_{n-k:n}\right)  $ and $\overline{H}%
_{n}^{\left(  1\right)  }\left(  xZ_{n-k:n}\right)  ,$ then from Lemma
$\left(  \ref{Lemma1}\right)  ,$ we have $\sup_{t\in I_{n}\left(  x\right)
}\left(  \mathcal{L}\left(  t\right)  /\mathcal{L}\left(  \mathbb{V}%
_{n}\left(  t\right)  \right)  \right)  =O_{\mathbf{p}}\left(  1\right)  ,$
uniformly on $x\geq p^{\gamma}.$ On the other hand, form representation
$\left(  \ref{HN1}\right)  ,$ we infer that
\[
\left\{  \overline{H}_{n}^{\left(  0\right)  }\left(  t\right)  ,\text{
}0<s<1-\theta\right\}  \overset{\mathcal{D}}{=}\left\{  \mathbb{U}_{n}\left(
\overline{H}^{\left(  0\right)  }\left(  t\right)  \right)  ,\text{
}0<s<1-\theta\right\}  .
\]
Then by using assertion $\left(  ii\right)  $ in Proposition $\ref{Propo1},$
we have that%
\begin{align*}
A_{n1}\left(  x\right)   &  =O_{\mathbf{p}}\left(  n^{-\eta}\right)
\int_{M_{n}^{-}\left(  x\right)  }^{M_{n}^{+}\left(  x\right)  }%
\frac{\mathcal{L}\left(  t\right)  }{\overline{F}\left(  Z_{n-k:n}\right)  }\\
&  \times\left\{  \frac{\left(  \overline{H}^{\left(  0\right)  }\left(
Q^{\left(  1\right)  }\left(  \mathbb{V}_{n}\left(  t\right)  \right)
\right)  \right)  ^{1-\eta}}{\overline{H}\left(  Q^{\left(  1\right)  }\left(
\mathbb{V}_{n}\left(  t\right)  \right)  \right)  }+\int_{0}^{Q^{\left(
1\right)  }\left(  \mathbb{V}_{n}\left(  t\right)  \right)  }\frac{\left(
\overline{H}^{\left(  0\right)  }\left(  v\right)  \right)  ^{1/2-\eta
}dH\left(  v\right)  }{\overline{H}\left(  v\right)  }\right\}  dt,
\end{align*}
where $M_{n}^{+}\left(  x\right)  $ and $M_{n}^{-}\left(  x\right)  $ denote,
the maximum and the minimum of $\overline{H}_{n}^{\left(  1\right)  }\left(
xZ_{n-k:n}\right)  $ and $\overline{H}^{\left(  1\right)  }\left(
xZ_{n-k:n}\right)  $ respectively. Observe that both $\overline{H}^{\left(
0\right)  }\circ Q^{\left(  1\right)  }$ and $\overline{H}\circ Q^{\left(
1\right)  }$ are regularly varying at infinity with the same index equal to
$1.$ Then by using assertion $(iii)$ in Proposition $\ref{Propo1},$ we readily
show that $A_{n1}\left(  x\right)  $ equals%
\[
\frac{O_{\mathbf{p}}\left(  n^{-\eta}\right)  }{\overline{F}\left(
Z_{n-k:n}\right)  }\int_{M_{n}^{-}\left(  x\right)  }^{M_{n}^{+}\left(
x\right)  }\mathcal{L}\left(  t\right)  \left\{  \frac{\left(  \overline
{H}^{\left(  0\right)  }\left(  Q^{\left(  1\right)  }\left(  t\right)
\right)  \right)  ^{1-\eta}}{\overline{H}\left(  Q^{\left(  1\right)  }\left(
t\right)  \right)  }+\int_{0}^{Q^{\left(  1\right)  }\left(  t\right)  }%
\frac{\left(  \overline{H}^{\left(  0\right)  }\left(  v\right)  \right)
^{1-\eta}dH\left(  v\right)  }{\overline{H}\left(  v\right)  }\right\}  dt.
\]
Note that $\overline{H}^{\left(  0\right)  }<\overline{H},$ then it follows,
after integration, that%
\[
A_{n1}\left(  x\right)  =\frac{O_{\mathbf{p}}\left(  n^{-\eta}\right)
}{\overline{F}\left(  Z_{n-k:n}\right)  }\int_{M_{n}^{-}\left(  x\right)
}^{M_{n}^{+}\left(  x\right)  }\frac{\mathcal{L}\left(  t\right)  }{\left[
\overline{H}\left(  Q^{\left(  1\right)  }\left(  t\right)  \right)  \right]
^{\eta}}dt,
\]
which, by a change of variables, becomes%
\[
A_{n1}\left(  x\right)  =\frac{O_{\mathbf{p}}\left(  n^{-\eta}\right)
\overline{H}^{\left(  1\right)  }\left(  Z_{n-k:n}\right)  }{\overline
{F}\left(  Z_{n-k:n}\right)  }\int_{M_{n}^{-}\left(  x\right)  /\overline
{H}^{\left(  1\right)  }\left(  Z_{n-k:n}\right)  }^{M_{n}^{+}\left(
x\right)  /\overline{H}^{\left(  1\right)  }\left(  Z_{n-k:n}\right)  }%
\frac{\mathcal{L}\left(  \overline{H}^{\left(  1\right)  }\left(
Z_{n-k:n}\right)  t\right)  }{\left[  \overline{H}\left(  Q^{\left(  1\right)
}\left(  \overline{H}^{\left(  1\right)  }\left(  Z_{n-k:n}\right)  t\right)
\right)  \right]  ^{\eta}}dt.
\]
From now on, a key result related to the regular variation concept, namely
Potter's inequalities\textbf{ }%
\citep[see, e.g., Proposition B.1.9, assertion 5  in][]{deHF06}, will be
applied quite frequently. For this reason, we need to recall this very useful
tool here. Suppose that $\Psi$ is a regularly varying function at infinity
with index $\kappa$ and let $c_{1},c_{2}$ be positive real numbers. Then there
exists $t_{0}=t_{0}\left(  c_{1},c_{2}\right)  $ such that for $t\geq t_{0},$
$tx\geq t_{0},$%
\begin{equation}
\left(  1-c_{1}\right)  x^{\kappa}\min\left(  x^{c_{2}},x^{c_{2}}\right)
<\Psi\left(  tx\right)  /\Psi\left(  t\right)  <\left(  1+c_{1}\right)
x^{\kappa}\max\left(  x^{c_{2}},x^{c_{2}}\right)  . \label{Potter}%
\end{equation}
Since $\mathcal{L}$ and $\overline{H}\circ Q^{\left(  1\right)  }$ are
regularly varying at infinity with respective indices $p-1$ and $1,$ then we
use $\left(  \ref{Potter}\right)  $ to write that, for sufficiently small
$\epsilon_{0}>0$ and for all large $n,$ we have%
\[
A_{n1}\left(  x\right)  =\frac{O_{\mathbf{p}}\left(  n^{-\eta}\right)
\overline{H}^{\left(  1\right)  }\left(  Z_{n-k:n}\right)  \mathcal{L}\left(
\overline{H}^{\left(  1\right)  }\left(  Z_{n-k:n}\right)  \right)  }{F\left(
Z_{n-k:n}\right)  \left[  \overline{H}\left(  Q^{\left(  1\right)  }\left(
\overline{H}^{\left(  1\right)  }\left(  Z_{n-k:n}\right)  \right)  \right)
\right]  ^{\eta}}\int_{M_{n}^{-}\left(  x\right)  /\overline{H}^{\left(
1\right)  }\left(  Z_{n-k:n}\right)  }^{M_{n}^{+}\left(  x\right)
/\overline{H}^{\left(  1\right)  }\left(  Z_{n-k:n}\right)  }\frac
{t^{p-1\pm\epsilon_{0}}}{t^{\eta\pm\epsilon_{0}}}dt.
\]
From assertion $\left(  i\right)  $ of Lemma 4.1 in \cite{BMN-2015}, we have
$p\overline{H}\left(  t\right)  /\overline{H}^{\left(  1\right)  }\left(
t\right)  \rightarrow1,$ as $t\rightarrow\infty,$ hence $\overline{H}^{\left(
1\right)  }\left(  Z_{n-k:n}\right)  /\overline{H}\left(  Z_{n-k:n}\right)
\overset{\mathbf{p}}{\rightarrow}p.$ On the other hand, we have $Q^{\left(
1\right)  }\left(  \overline{H}^{\left(  1\right)  }\left(  t\right)  \right)
=t$ and $\dfrac{n}{k}\overline{H}\left(  Z_{n-k:n}\right)  \overset
{\mathbf{p}}{\rightarrow}1,$ thus%
\[
\frac{\overline{H}^{\left(  1\right)  }\left(  Z_{n-k:n}\right)
\mathcal{L}\left(  \overline{H}^{\left(  1\right)  }\left(  Z_{n-k:n}\right)
\right)  }{\overline{F}\left(  Z_{n-k:n}\right)  \left[  \overline{H}\left(
Q^{\left(  1\right)  }\left(  \overline{H}^{\left(  1\right)  }\left(
Z_{n-k:n}\right)  \right)  \right)  \right]  ^{\eta}}=\left(  1+o_{\mathbf{p}%
}\left(  1\right)  \right)  \left(  n/k\right)  ^{\eta}.
\]
Then, after integration, we obtain%
\[
A_{n1}\left(  x\right)  =\frac{O_{\mathbf{p}}\left(  n^{-\eta}\right)
}{\left(  k/n\right)  ^{\eta}}\left\vert \left(  \frac{\overline{H}%
_{n}^{\left(  1\right)  }\left(  xZ_{n-k:n}\right)  }{\overline{H}^{\left(
1\right)  }\left(  Z_{n-k:n}\right)  }\right)  ^{p-\eta\pm\epsilon_{0}%
}-\left(  \frac{\overline{H}^{\left(  1\right)  }\left(  xZ_{n-k:n}\right)
}{\overline{H}^{\left(  1\right)  }\left(  Z_{n-k:n}\right)  }\right)
^{p-\eta\pm\epsilon_{0}}\right\vert .
\]
We have $\dfrac{n}{k}\overline{H}^{\left(  1\right)  }\left(  Z_{n-k:n}%
\right)  \overset{\mathbf{p}}{\rightarrow}p,$ therefore
\[
A_{n1}\left(  x\right)  =\frac{O_{\mathbf{p}}\left(  n^{-\eta}\right)
}{\left(  k/n\right)  ^{p\pm\epsilon_{0}}}\left\vert \left(  \overline{H}%
_{n}^{\left(  1\right)  }\left(  xZ_{n-k:n}\right)  \right)  ^{p-\eta
\pm\epsilon_{0}}-\left(  \overline{H}^{\left(  1\right)  }\left(
xZ_{n-k:n}\right)  \right)  ^{p-\eta\pm\epsilon_{0}}\right\vert .
\]
By applying the mean value theorem, then assertion $\left(  i\right)  $ in
Proposition $\ref{Propo1},$ we have%
\[
A_{n1}\left(  x\right)  =\frac{O_{\mathbf{p}}\left(  n^{-\eta}\right)
}{\left(  k/n\right)  ^{p\pm\epsilon_{0}}}\left\vert \overline{H}_{n}^{\left(
1\right)  }\left(  xZ_{n-k:n}\right)  -\overline{H}^{\left(  1\right)
}\left(  xZ_{n-k:n}\right)  \right\vert \left(  \overline{H}^{\left(
1\right)  }\left(  xZ_{n-k:n}\right)  \right)  ^{p-\eta-1\pm\epsilon_{0}},
\]
and by assertion $\left(  ii\right)  $ in Proposition $\ref{Propo1},$ we get%
\[
A_{n1}\left(  x\right)  =\frac{O_{\mathbf{p}}\left(  n^{-2\eta}\right)
}{\left(  k/n\right)  ^{p\pm\epsilon_{0}}}\left(  \overline{H}^{\left(
1\right)  }\left(  xZ_{n-k:n}\right)  \right)  ^{p-2\eta\pm\epsilon_{0}}.
\]
Once again, by using the fact that $\dfrac{n}{k}\overline{H}^{\left(
1\right)  }\left(  Z_{n-k:n}\right)  \overset{\mathbf{p}}{\rightarrow}p$
together with routine manipulations of Potter's inequalities $\left(
\ref{Potter}\right)  ,$ we end up with $A_{n1}\left(  x\right)  =O_{\mathbf{p}%
}\left(  k^{-2\eta\pm\epsilon_{0}}\right)  x^{\left(  2\eta-p\right)
/\gamma\pm\epsilon_{0}}.$ Let us now consider the term $A_{n2}\left(
x\right)  ,$ which may be decomposed into the sum of%
\[
A_{n2}^{\left(  1\right)  }\left(  x\right)  :=\int_{x^{-1/\gamma}%
p/n}^{\overline{H}^{\left(  1\right)  }\left(  xZ_{n-k:n}\right)  }%
\frac{\mathcal{L}\left(  \mathbb{V}_{n}\left(  t\right)  \right)
-\mathcal{L}\left(  t\right)  }{\overline{F}\left(  Z_{n-k:n}\right)  }%
\int_{0}^{Q^{\left(  1\right)  }\left(  \mathbb{V}_{n}\left(  t\right)
\right)  }\frac{d\left(  \overline{H}_{n}^{\left(  0\right)  }\left(
v\right)  -\overline{H}^{\left(  0\right)  }\left(  v\right)  \right)
}{\overline{H}\left(  v\right)  }dt,
\]
and%
\[
A_{n2}^{\left(  2\right)  }\left(  x\right)  :=\int_{0}^{x^{-1/\gamma}%
p/n}\frac{\mathcal{L}\left(  \mathbb{V}_{n}\left(  t\right)  \right)
-\mathcal{L}\left(  t\right)  }{\overline{F}\left(  Z_{n-k:n}\right)  }%
\int_{0}^{Q^{\left(  1\right)  }\left(  \mathbb{V}_{n}\left(  t\right)
\right)  }\frac{d\left(  \overline{H}_{n}^{\left(  0\right)  }\left(
v\right)  -\overline{H}^{\left(  0\right)  }\left(  v\right)  \right)
}{\overline{H}\left(  v\right)  }dt.
\]
For the term $A_{n2}^{\left(  1\right)  }\left(  x\right)  ,$ we integrate the
second integral by parts to get%
\begin{align}
A_{n2}^{\left(  1\right)  }\left(  x\right)   &  =%
%TCIMACRO{\dint _{x^{-1/\gamma}p/n}^{\overline{H}^{\left(  1\right)  }\left(
%xZ_{n-k:n}\right)  }}%
%BeginExpansion
{\displaystyle\int_{x^{-1/\gamma}p/n}^{\overline{H}^{\left(  1\right)
}\left(  xZ_{n-k:n}\right)  }}
%EndExpansion
\frac{\mathcal{L}\left(  \mathbb{V}_{n}\left(  t\right)  \right)
-\mathcal{L}\left(  t\right)  }{\overline{F}\left(  Z_{n-k:n}\right)
}\left\{  \frac{\overline{H}_{n}^{\left(  0\right)  }\left(  Q^{\left(
1\right)  }\left(  \mathbb{V}_{n}\left(  t\right)  \right)  \right)
-\overline{H}^{\left(  0\right)  }\left(  Q^{\left(  1\right)  }\left(
\mathbb{V}_{n}\left(  t\right)  \right)  \right)  }{\overline{H}\left(
Q^{\left(  1\right)  }\left(  \mathbb{V}_{n}\left(  t\right)  \right)
\right)  }\right. \nonumber\\
&  \left.  -\int_{0}^{Q^{\left(  1\right)  }\left(  \mathbb{V}_{n}\left(
t\right)  \right)  }\frac{\overline{H}_{n}^{\left(  0\right)  }\left(
v\right)  -\overline{H}^{\left(  0\right)  }\left(  v\right)  }{\left(
\overline{H}\left(  v\right)  \right)  ^{2}}d\overline{H}\left(  v\right)
\right\}  dt. \label{d}%
\end{align}
Using assertion $\left(  ii\right)  $ in Proposition $\ref{Propo1}$ yields%
\[
\left\vert A_{n2}^{\left(  1\right)  }\left(  x\right)  \right\vert
=O_{\mathbf{p}}\left(  n^{-\eta}\right)  \int_{x^{-1/\gamma}p/n}^{\overline
{H}^{\left(  1\right)  }\left(  xZ_{n-k:n}\right)  }\frac{\left\vert
\mathcal{L}\left(  \mathbb{V}_{n}\left(  t\right)  \right)  -\mathcal{L}%
\left(  t\right)  \right\vert }{\overline{F}\left(  Z_{n-k:n}\right)  }%
\frac{dt}{\left(  \overline{H}\left(  Q^{\left(  1\right)  }\left(  t\right)
\right)  \right)  ^{\eta}},
\]
which may be rewritten into%
\[
O_{\mathbf{p}}\left(  n^{-\eta}\right)  \int_{x^{-1/\gamma}p/n}^{\overline
{H}^{\left(  1\right)  }\left(  xZ_{n-k:n}\right)  }\frac{\mathcal{L}\left(
t\right)  }{\overline{F}\left(  Z_{n-k:n}\right)  }\left\vert \frac
{\mathcal{L}\left(  \mathbb{V}_{n}\left(  t\right)  \right)  }{\mathcal{L}%
\left(  t\right)  }-1\right\vert \frac{dt}{\left(  \overline{H}\left(
Q^{\left(  1\right)  }\left(  t\right)  \right)  \right)  ^{\eta}}.
\]
In the interval of endpoints $\overline{H}^{\left(  1\right)  }\left(
xZ_{n-k:n}\right)  $ and $x^{-1/\gamma}p/n,$ we have, for large $n,$
$\mathbb{V}_{n}\left(  t\right)  $ is uniformly close to zero (in
probability). On the other hand, assertion $\left(  i\right)  $ in Proposition
$\ref{Propo1},$ implies that $\left(  \mathbb{V}_{n}\left(  t\right)
/t\right)  ^{\pm}$ is uniformly stochastically bounded. Then, making use of
Potter's inequalities $\left(  \ref{Potter}\right)  $ applied to the regularly
varying function $\mathcal{L}\left(  \cdot\right)  $ (with index $-q),$ we
have%
\[
\left(  1-\epsilon_{0}\right)  \left(  \mathbb{V}_{n}\left(  t\right)
/t\right)  ^{-\epsilon_{0}-q}<\frac{\mathcal{L}\left(  \mathbb{V}_{n}\left(
t\right)  \right)  }{\mathcal{L}\left(  t\right)  }<\left(  1+\epsilon
_{0}\right)  \left(  \mathbb{V}_{n}\left(  t\right)  /t\right)  ^{\epsilon
_{0}-q}.
\]
It is clear that%
\[
\left\vert \frac{\mathcal{L}\left(  \mathbb{V}_{n}\left(  t\right)  \right)
}{\mathcal{L}\left(  t\right)  }-1\right\vert \leq\max\left\{  \left\vert
\left(  1+\epsilon_{0}\right)  \left(  \mathbb{V}_{n}\left(  t\right)
/t\right)  ^{\epsilon_{0}-q}-1\right\vert ,\left\vert \left(  1-\epsilon
_{0}\right)  \left(  \mathbb{V}_{n}\left(  t\right)  /t\right)  ^{-\epsilon
_{0}-q}-1\right\vert \right\}  .
\]
For the sake of simplicity, we rewrite this inequality into
\[
\left\vert \mathcal{L}\left(  \mathbb{V}_{n}\left(  t\right)  \right)
/\mathcal{L}\left(  t\right)  -1\right\vert \leq\left\vert \left(
1\pm\epsilon_{0}\right)  \left(  \mathbb{V}_{n}\left(  t\right)  /t\right)
^{\pm\epsilon_{0}-q}-1\right\vert
\]
which is in turn is less than of equal to $\left\vert \left(  \mathbb{V}%
_{n}\left(  t\right)  /t\right)  ^{\pm\epsilon_{0}-q}-1\right\vert
+\epsilon_{0}\left(  \mathbb{V}_{n}\left(  t\right)  /t\right)  ^{\pm
\epsilon_{0}-q}.$ In other words, we have%
\[
\left\vert \mathcal{L}\left(  \mathbb{V}_{n}\left(  t\right)  \right)
/\mathcal{L}\left(  t\right)  -1\right\vert \leq\frac{\left\vert \left(
\mathbb{V}_{n}\left(  t\right)  \right)  ^{\pm\epsilon_{0}-q}-t^{\pm
\epsilon_{0}-q}\right\vert }{t^{\pm\epsilon_{0}-q}}+\epsilon_{0}\left(
\mathbb{V}_{n}\left(  t\right)  /t\right)  ^{\pm\epsilon_{0}-q}.
\]
By making use of assertion $\left(  ii\right)  $ in Proposition $\ref{Propo1}$
(once again), we show that
\[
\left\vert \mathcal{L}\left(  \mathbb{V}_{n}\left(  t\right)  \right)
/\mathcal{L}\left(  t\right)  -1\right\vert \leq O_{\mathbf{p}}\left(
n^{-\eta}\right)  t^{\left(  \eta-1\right)  \left(  \pm\epsilon_{0}-q\right)
}+o_{\mathbf{p}}\left(  1\right)  .
\]
Therefore%
\begin{align*}
\left\vert A_{n2}^{\left(  1\right)  }\left(  x\right)  \right\vert  &  \leq
O_{\mathbf{p}}\left(  n^{-2\eta}\right)  \int_{x^{-1/\gamma}p/n}^{\overline
{H}^{\left(  1\right)  }\left(  xZ_{n-k:n}\right)  }\frac{\mathcal{L}\left(
t\right)  }{\overline{F}\left(  Z_{n-k:n}\right)  }\frac{t^{\left(
\eta-1\right)  \left(  \pm\epsilon_{0}-q\right)  }}{\left(  \overline
{H}\left(  Q^{\left(  1\right)  }\left(  t\right)  \right)  \right)  ^{\eta}%
}dt\\
&  +o_{\mathbf{p}}\left(  n^{-\eta}\right)  \int_{x^{-1/\gamma}p/n}%
^{\overline{H}^{\left(  1\right)  }\left(  xZ_{n-k:n}\right)  }\frac
{\mathcal{L}\left(  t\right)  }{\overline{F}\left(  Z_{n-k:n}\right)  }%
\frac{dt}{\left(  \overline{H}\left(  Q^{\left(  1\right)  }\left(  t\right)
\right)  \right)  ^{\eta}}.
\end{align*}
By a similar treatment as the above (we omit the details), we end up with
$A_{n2}^{\left(  1\right)  }\left(  x\right)  =O_{\mathbf{p}}\left(
k^{-2\eta\pm\epsilon_{0}}\right)  x^{\left(  2\eta-p\right)  /\gamma
\pm\epsilon_{0}}.$ For the term $A_{n2}^{\left(  2\right)  }\left(  x\right)
,$ we start by noting that $x^{-1/\gamma}p/n\leq1/n,$ for any $x\geq
p^{\gamma}.$ Since $\mathbb{V}_{n}\left(  t\right)  =U_{1:n},$ for
$0<t\leq1/n,$ it follows that
\[
A_{n2}^{\left(  2\right)  }\left(  x\right)  =\left\{  \int_{0}^{x^{-1/\gamma
}p/n}\frac{\mathcal{L}\left(  U_{1:n}\right)  -\mathcal{L}\left(  t\right)
}{\overline{F}\left(  Z_{n-k:n}\right)  }dt\right\}  \int_{0}^{Q^{\left(
1\right)  }\left(  U_{1:n}\right)  }\frac{d\left(  \overline{H}_{n}^{\left(
0\right)  }\left(  v\right)  -\overline{H}^{\left(  0\right)  }\left(
v\right)  \right)  }{\overline{H}\left(  v\right)  }.
\]
For the second integral, we use analogous arguments based on assertion
$\left(  ii\right)  $ in Proposition $\ref{Propo1},$ to write
\[
A_{n2}^{\left(  2\right)  }\left(  x\right)  =\frac{O_{\mathbf{p}}\left(
n^{-\eta}\right)  }{\overline{F}\left(  Z_{n-k:n}\right)  \left(  \overline
{H}\left(  Q^{\left(  1\right)  }\left(  U_{1:n}\right)  \right)  \right)
^{\eta}}\int_{0}^{x^{-1/\gamma}p/n}\left(  \mathcal{L}\left(  U_{1:n}\right)
+\mathcal{L}\left(  t\right)  \right)  dt.
\]
Note that the latter integral is equal to $\mathcal{L}\left(  U_{1:n}\right)
px^{-1/\gamma}/n+\int_{0}^{x^{-1/\gamma}p/n}\mathcal{L}\left(  t\right)  dt.$
By Potter's inequalities $\left(  \ref{Potter}\right)  $ on $\mathcal{L}$ and
the fact that $nU_{1:n}\overset{\mathbf{p}}{\rightarrow}1,$ it becomes
$\left(  1+o_{\mathbf{p}}\left(  1\right)  \right)  pn^{-1}\mathcal{L}\left(
1/n\right)  x^{-1/\gamma}.$\textbf{ }By replacing $\mathcal{L}$ by its
expression, we get
\[
A_{n2}^{\left(  2\right)  }\left(  x\right)  =\frac{\overline{F}\left(
Q^{\left(  1\right)  }\left(  1/n\right)  \right)  }{\overline{F}\left(
H^{\leftarrow}\left(  1-k/n\right)  \right)  }\frac{n^{-\eta-1}}{\left(
\overline{H}\left(  Q^{\left(  1\right)  }\left(  1/n\right)  \right)
\right)  ^{\eta+1}}O_{\mathbf{p}}\left(  x^{-1/\gamma}\right)  .
\]
Now, we apply Potter's inequalities $\left(  \ref{Potter}\right)  $\ to
$\overline{F},$ to write
\begin{equation}
\frac{\overline{F}\left(  Q^{\left(  1\right)  }\left(  1/n\right)  \right)
}{\overline{F}\left(  H^{\leftarrow}\left(  1-k/n\right)  \right)  }=O\left(
1\right)  \left(  \frac{H^{\leftarrow}\left(  1-k/n\right)  }{Q^{\left(
1\right)  }\left(  1/n\right)  }\right)  ^{1/\gamma_{1}\pm\epsilon_{0}},\text{
as }n\rightarrow\infty. \label{equ}%
\end{equation}
Since $\overline{H}\left(  w\right)  >\overline{H}^{1}\left(  w\right)  ,$
then $Q^{\left(  1\right)  }\left(  s\right)  >H^{\leftarrow}\left(
1-s\right)  $ and therefore
\[
\frac{H^{\leftarrow}\left(  1-k/n\right)  }{Q^{\left(  1\right)  }\left(
1/n\right)  }\leq\frac{H^{\leftarrow}\left(  1-k/n\right)  }{H^{\leftarrow
}\left(  1-1/n\right)  },
\]
which, by (once again) using Potter's inequalities $\left(  \ref{Potter}%
\right)  $ to $H^{\leftarrow}\left(  1-s\right)  ,$ equals $O\left(
k^{-\gamma\pm\epsilon_{0}}\right)  .$ Then the right-hand side of $\left(
\ref{equ}\right)  $ is asymptotically equal to $O\left(  k^{-\gamma/\gamma
_{1}\pm\epsilon_{0}}\right)  .$ On the other hand, we have $\left(
\overline{H}\left(  Q^{\left(  1\right)  }\left(  1/n\right)  \right)
\right)  ^{\eta+1}>\left(  \overline{H}^{\left(  1\right)  }\left(  Q^{\left(
1\right)  }\left(  1/n\right)  \right)  \right)  ^{\eta+1}=n^{-\eta-1}%
,$\textbf{ }it follows that\textbf{ }$A_{n2}^{\left(  2\right)  }\left(
x\right)  =O_{\mathbf{p}}\left(  k^{-p\pm\epsilon_{0}}\right)  x^{-1/\gamma}%
,$\textbf{ }where\textbf{ }$p=\gamma/\gamma_{1}$ is assumed to be greater than
$1/2.$ Consequently, we have\textbf{ }$A_{n2}^{\left(  2\right)  }\left(
x\right)  =O_{\mathbf{p}}\left(  k^{-2\eta\pm\epsilon_{0}}\right)  x^{\left(
2\eta-p\right)  /\gamma\pm\epsilon_{0}},$\textbf{ }for any $1/4<\eta\leq p/2,$
and thus $A_{n2}\left(  x\right)  =O_{\mathbf{p}}\left(  k^{-2\eta\pm
\epsilon_{0}}\right)  x^{\left(  2\eta-p\right)  /\gamma\pm\epsilon_{0}}$ as
well.\textbf{ }By similar arguments, we show that $A_{n3}\left(  x\right)  $
asymptotically equals the same quantity, therefore we omit the details.
Finally, we may write that $\mathbb{T}_{n2}^{\ast}\left(  x\right)
=\mathbb{T}_{n2}\left(  x\right)  +O_{\mathbf{p}}\left(  k^{-2\eta\pm
\epsilon_{0}}\right)  x^{\left(  2\eta-p\right)  /\gamma\pm\epsilon_{0}}.$ For
the term $\mathbb{T}_{n3}^{\ast}\left(  x\right)  $ we proceed as we did for
$\mathbb{T}_{n2}^{\ast}\left(  x\right)  $ but with more tedious
manipulations. By two steps, we have to get rid of integrands $dH_{n}^{\left(
0\right)  }\left(  v\right)  $ and $dH_{n}^{\left(  1\right)  }\left(
w\right)  $ and replace them by their theoretical counterparts $dH^{\left(
0\right)  }\left(  v\right)  $ and $dH^{\left(  1\right)  }\left(  w\right)
.$ First, we define, for $0<t<1-\theta,$%
\[
Q^{\left(  0\right)  }\left(  t\right)  :=\inf\left\{  v:\overline{H}^{\left(
0\right)  }\left(  v\right)  >t\right\}  \text{ and }Q_{n}^{\left(  0\right)
}\left(  t\right)  :=\inf\left\{  v:\overline{H}_{n}^{\left(  0\right)
}\left(  v\right)  >t\right\}  .
\]
By the change of variables $v=Q_{n}^{\left(  0\right)  }\left(  t\right)  $
and similar arguments to those used for the terms $A_{ni}\left(  x\right)  ,$
we show that
\begin{align*}
\mathbb{T}_{n3}^{\ast}\left(  x\right)   &  =\int_{xZ_{n-k:n}}^{\infty}%
\frac{\mathcal{\ell}\left(  w\right)  }{\overline{F}\left(  Z_{n-k:n}\right)
}\left\{  \int_{0}^{w}\frac{\overline{H}\left(  v\right)  -\overline{H}%
_{n}\left(  v\right)  }{\overline{H}^{2}\left(  v\right)  }dH^{\left(
0\right)  }\left(  v\right)  \right\}  dH_{n}^{\left(  1\right)  }\left(
w\right) \\
&  +O_{\mathbf{p}}\left(  k^{-2\eta\pm\epsilon_{0}}\right)  x^{\left(
2\eta-p\right)  /\gamma\pm\epsilon_{0}}.
\end{align*}
Second, we use the change of variables $w=Q_{n}^{\left(  1\right)  }\left(
s\right)  $ and proceed as above to get $\mathbb{T}_{n3}^{\ast}\left(
x\right)  =\mathbb{T}_{n3}\left(  x\right)  +O_{\mathbf{p}}\left(
k^{-2\eta\pm\epsilon_{0}}\right)  x^{\left(  2\eta-p\right)  /\gamma
\pm\epsilon_{0}}.$ At this stage, we proved that $\mathbb{T}_{n1}\left(
x\right)  $ is exactly $\mathbb{T}_{n1}^{\ast}\left(  x\right)  $ and that
$\mathbb{T}_{n2}\left(  x\right)  $ and\textbf{ }$\mathbb{T}_{n3}\left(
x\right)  $ are approximated by $\mathbb{T}_{n2}^{\ast}\left(  x\right)  $ and
$\mathbb{T}_{n3}^{\ast}\left(  x\right)  $ respectively. For the first
remainder term $R_{n1}\left(  x\right)  ,$ it suffices to follow the same
procedure to show that, uniformly on $x\geq p^{\gamma},$ $\sqrt{k}%
R_{n1}\left(  x\right)  =O_{\mathbf{p}}\left(  k^{-2\eta\pm\epsilon_{0}%
}\right)  x^{\left(  2\eta-p\right)  /\gamma\pm\epsilon_{0}}.$ As for the last
term $R_{n2}\left(  x\right)  ,$ we use similar technics to the decomposition
\begin{align*}
&
%TCIMACRO{\dint _{0}^{w}}%
%BeginExpansion
{\displaystyle\int_{0}^{w}}
%EndExpansion
\dfrac{dH_{n}^{\left(  0\right)  }\left(  v\right)  }{\overline{H}_{n}\left(
v\right)  }-%
%TCIMACRO{\dint _{0}^{w}}%
%BeginExpansion
{\displaystyle\int_{0}^{w}}
%EndExpansion
\dfrac{dH^{\left(  0\right)  }\left(  v\right)  }{\overline{H}\left(
v\right)  }\\
&  =%
%TCIMACRO{\dint _{0}^{w}}%
%BeginExpansion
{\displaystyle\int_{0}^{w}}
%EndExpansion
\dfrac{d\left(  H_{n}^{\left(  0\right)  }\left(  v\right)  -H^{\left(
0\right)  }\left(  v\right)  \right)  }{\overline{H}_{n}\left(  v\right)  }+%
%TCIMACRO{\dint _{0}^{w}}%
%BeginExpansion
{\displaystyle\int_{0}^{w}}
%EndExpansion
\frac{\overline{H}_{n}\left(  v\right)  -\overline{H}\left(  v\right)
}{\overline{H}_{n}\left(  v\right)  \overline{H}\left(  v\right)  }dH^{\left(
0\right)  }\left(  v\right)  ,
\end{align*}
to show that $\sqrt{k}R_{n1}\left(  x\right)  =O_{\mathbf{p}}\left(
k^{-2\eta\pm\epsilon_{0}}\right)  x^{\left(  2\eta-p\right)  /\gamma
\pm\epsilon_{0}}$ as well, therefore we omit details. Now, the weak
approximation $\left(  \ref{MN1-decompos}\right)  $ of $\mathbb{M}_{n1}\left(
x\right)  $ is well established.

\subsubsection{\textbf{Gaussian approximation to }$D_{n}\left(  x\right)  $}

We start by representing each $\mathbb{T}_{nj}\left(  x\right)  ,$ $j=1,2,3,$
in terms of the processes $\left(  \ref{betan}\right)  $ and $\left(
\ref{beta-tild}\right)  .$ Note that $\mathbb{T}_{n1}\left(  x\right)  $ may
be rewritten into%
\[
\mathbb{T}_{n1}\left(  x\right)  =-\int_{xZ_{n-k:n}}^{\infty}\frac
{\overline{F}\left(  w\right)  }{\overline{F}\left(  Z_{n-k:n}\right)  }%
\frac{d\left(  \overline{H}_{n}^{\left(  1\right)  }\left(  w\right)
-\overline{H}^{\left(  1\right)  }\left(  w\right)  \right)  }{\overline
{H}\left(  w\right)  },
\]
which, by integration by parts, becomes%
\[
\mathbb{T}_{n1}\left(  x\right)  =-\left[  \frac{\overline{F}\left(  w\right)
}{\overline{F}\left(  Z_{n-k:n}\right)  }\frac{\overline{H}_{n}^{\left(
1\right)  }\left(  w\right)  -\overline{H}^{\left(  1\right)  }\left(
w\right)  }{\overline{H}\left(  w\right)  }\right]  _{xZ_{n-k:n}}^{\infty
}+\int_{xZ_{n-k:n}}^{\infty}\frac{\overline{H}_{n}^{\left(  1\right)  }\left(
w\right)  -\overline{H}^{\left(  1\right)  }\left(  w\right)  }{\overline
{H}\left(  w\right)  }\frac{d\overline{F}\left(  w\right)  }{\overline
{F}\left(  Z_{n-k:n}\right)  }.
\]
Observe that, for $u\geq Z_{n:n},$ $\overline{H}_{n}^{\left(  1\right)
}\left(  u\right)  =0$ and, from Lemma 4.1 in \cite{BMN-2015}, $\overline
{H}^{\left(  1\right)  }\left(  w\right)  /\overline{H}\left(  w\right)  $
tends to $p$ as $w\rightarrow\infty.$ It follows that%
\[
\lim_{w\rightarrow\infty}\overline{F}\left(  w\right)  \frac{\overline{H}%
_{n}^{\left(  1\right)  }\left(  w\right)  -\overline{H}^{\left(  1\right)
}\left(  w\right)  }{\overline{H}\left(  w\right)  }=-\lim_{w\rightarrow
\infty}\overline{F}\left(  w\right)  \frac{\overline{H}^{\left(  1\right)
}\left(  w\right)  }{\overline{H}\left(  w\right)  }=0.
\]
Thus, after a change of variables, we have%
\begin{align*}
\mathbb{T}_{n1}\left(  x\right)   &  =\frac{\overline{F}\left(  xZ_{n-k:n}%
\right)  }{\overline{F}\left(  Z_{n-k:n}\right)  }\frac{\overline{H}%
_{n}^{\left(  1\right)  }\left(  xZ_{n-k:n}\right)  -\overline{H}^{\left(
1\right)  }\left(  xZ_{n-k:n}\right)  }{\overline{H}\left(  xZ_{n-k:n}\right)
}\\
&  \ \ \ \ \ \ \ \ \ \ +\int_{x}^{\infty}\frac{\overline{H}_{n}^{\left(
1\right)  }\left(  wZ_{n-k:n}\right)  -\overline{H}^{\left(  1\right)
}\left(  wZ_{n-k:n}\right)  }{\overline{H}\left(  wZ_{n-k:n}\right)  }%
\frac{d\overline{F}\left(  wZ_{n-k:n}\right)  }{\overline{F}\left(
Z_{n-k:n}\right)  }.
\end{align*}
Making the change of variables $w=F^{\leftarrow}\left(  1-s\overline{F}\left(
Z_{n-k:n}\right)  \right)  /Z_{n-k:n}=:\xi_{n}\left(  s\right)  $ and using
$\left(  \ref{rep-H1}\right)  $ and $\left(  \ref{betan}\right)  $ yield%
\[
\mathbb{T}_{n1}\left(  x\right)  =\frac{\sqrt{k}}{n}\frac{\overline{F}\left(
xZ_{n-k:n}\right)  }{\overline{F}\left(  Z_{n-k:n}\right)  }\frac{\beta
_{n}\left(  x\right)  }{\overline{H}\left(  xZ_{n-k:n}\right)  }+\frac
{\sqrt{k}}{n}\int_{0}^{\frac{\overline{F}\left(  xZ_{n-k:n}\right)
}{\overline{F}\left(  Z_{n-k:n}\right)  }}\frac{\beta_{n}\left(  \xi
_{n}\left(  s\right)  \right)  }{\overline{H}\left(  \xi_{n}\left(  s\right)
Z_{n-k:n}\right)  }ds.
\]
Next, we show that, for $\epsilon_{0}>0$ sufficiently small, we have%
\begin{equation}
\sqrt{k}\mathbb{T}_{n1}\left(  x\right)  =x^{-1/\gamma_{2}}\beta_{n}\left(
x\right)  +\frac{q}{\gamma}\int_{x}^{\infty}w^{1/\gamma_{2}-1}\beta_{n}\left(
w\right)  dw+o_{\mathbf{p}}\left(  x^{\left(  2\eta-p\right)  /\gamma
\pm\epsilon_{0}}\right)  . \label{TN1x}%
\end{equation}
To this end, let us decompose $\mathbb{T}_{n1}\left(  x\right)  $ into the sum
of
\[
\mathbb{T}_{n1}^{\left(  1\right)  }\left(  x\right)  :=\frac{\sqrt{k}}%
{n}\frac{\overline{F}\left(  xZ_{n-k:n}\right)  }{\overline{F}\left(
Z_{n-k:n}\right)  }\frac{\beta_{n}\left(  x\right)  }{\overline{H}\left(
xZ_{n-k:n}\right)  },\text{ }\mathbb{T}_{n1}^{\left(  2\right)  }\left(
x\right)  :=\frac{\sqrt{k}}{n}\int_{x^{-1/\gamma_{1}}}^{\frac{\overline
{F}\left(  xZ_{n-k:n}\right)  }{\overline{F}\left(  Z_{n-k:n}\right)  }}%
\frac{\beta_{n}\left(  \xi_{n}\left(  s\right)  \right)  }{\overline{H}\left(
\xi_{n}\left(  s\right)  Z_{n-k:n}\right)  }ds,
\]%
\[
\mathbb{T}_{n1}^{\left(  3\right)  }\left(  x\right)  :=\frac{\sqrt{k}}{n}%
\int_{0}^{x^{-1/\gamma_{1}}}\left\{  \frac{\overline{H}\left(  s^{-\gamma_{1}%
}h\right)  }{\overline{H}\left(  \xi_{n}\left(  s\right)  Z_{n-k:n}\right)
}-1\right\}  \frac{\beta_{n}\left(  \xi_{n}\left(  s\right)  \right)
}{\overline{H}\left(  s^{-\gamma_{1}}h\right)  }ds,
\]%
\[
\mathbb{T}_{n1}^{\left(  4\right)  }\left(  x\right)  :=\frac{\sqrt{k}}{n}%
\int_{0}^{x^{-1/\gamma_{1}}}\frac{\beta_{n}\left(  \xi_{n}\left(  s\right)
\right)  -\beta_{n}\left(  s^{-\gamma_{1}}\right)  }{\overline{H}\left(
s^{-\gamma_{1}}h\right)  }ds,
\]%
\[
\mathbb{T}_{n1}^{\left(  5\right)  }\left(  x\right)  :=\frac{\sqrt{k}}{n}%
\int_{0}^{x^{-1/\gamma_{1}}}\left\{  \frac{s^{\gamma_{1}/\gamma}\overline
{H}\left(  h\right)  }{\overline{H}\left(  s^{-\gamma_{1}}h\right)
}-1\right\}  \frac{\beta_{n}\left(  s^{-\gamma_{1}}\right)  }{s^{\gamma
_{1}/\gamma}\overline{H}\left(  h\right)  }ds,\text{ }\mathbb{T}_{n1}^{\left(
6\right)  }\left(  x\right)  :=\dfrac{\sqrt{k}}{n}%
%TCIMACRO{\dint _{0}^{x^{-1/\gamma_{1}}}}%
%BeginExpansion
{\displaystyle\int_{0}^{x^{-1/\gamma_{1}}}}
%EndExpansion
\dfrac{\beta_{n}\left(  s^{-\gamma_{1}}\right)  }{s^{\gamma_{1}/\gamma
}\overline{H}\left(  h\right)  }ds.
\]
We shall show that $\sqrt{k}\mathbb{T}_{n1}^{\left(  i\right)  }\left(
x\right)  =o_{\mathbf{p}}\left(  x^{\left(  2\eta-p\right)  /\gamma\pm
\epsilon_{0}}\right)  ,$ $i=2,...,5,$ uniformly on $x\geq p^{\gamma},$ while
$\sqrt{k}\mathbb{T}_{n1}^{\left(  1\right)  }\left(  x\right)  $ and $\sqrt
{k}\mathbb{T}_{n1}^{\left(  6\right)  }\left(  x\right)  $ are approximations
to the first and second terms in $\left(  \ref{TN1x}\right)  $ respectively.
Let us begin by $\sqrt{k}\mathbb{T}_{n1}^{\left(  2\right)  }\left(  x\right)
$ and write%
\[
\sqrt{k}\left\vert \mathbb{T}_{n1}^{\left(  2\right)  }\left(  x\right)
\right\vert \leq\frac{k}{n}\int_{0}^{c_{n}\left(  x\right)  }\frac{\left\vert
\beta_{n}\left(  \xi_{n}\left(  s\right)  \right)  \right\vert }{\overline
{H}\left(  \xi_{n}\left(  s\right)  Z_{n-k:n}\right)  }ds,
\]
where $c_{n}\left(  x\right)  :=\max\left(  \overline{F}\left(  xZ_{n-k:n}%
\right)  /\overline{F}\left(  Z_{n-k:n}\right)  ,x^{-1/\gamma_{1}}\right)  .$
Next, we provide a lower bound to $\overline{H}\left(  \xi_{n}\left(
s\right)  Z_{n-k:n}\right)  $ by applying Potter's inequalities $\left(
\ref{Potter}\right)  $\ to $\overline{F}.$ Since $Z_{n-k:n}\rightarrow\infty$
a.s., then from the right-hand side of $\left(  \ref{Potter}\right)  $\ we
have $\overline{F}\left(  xZ_{n-k:n}\right)  /\overline{F}\left(
Z_{n-k:n}\right)  <2x^{-1/\gamma_{1}\pm\epsilon_{0}}$ a.s., which implies that
$c_{n}\left(  x\right)  <2x^{-1/\gamma_{1}\pm\epsilon_{0}}$ a.s., for any
$x\geq p^{\gamma},$ as well. Let us now rewrite the sequence $\xi_{n}\left(
s\right)  $ into%
\begin{equation}
\xi_{n}\left(  s\right)  =\frac{F^{\mathbb{\leftarrow}}\left(  1-s\overline
{F}\left(  Z_{n-k:n}\right)  \right)  }{F^{\mathbb{\leftarrow}}\left(
1-\overline{F}\left(  Z_{n-k:n}\right)  \right)  },\text{ }0<s<1, \label{ksi}%
\end{equation}
and use the left-hand side of Potter's inequalities $\left(  \ref{Potter}%
\right)  $\ for the quantile function $u\rightarrow F^{\mathbb{\leftarrow}%
}\left(  1-u\right)  ,$ to get $\xi_{n}\left(  s\right)  \geq2^{-1}%
s^{-\gamma_{1}\pm\epsilon_{0}}$ a.s., for any $x^{-1/\gamma_{1}}%
<s<\overline{F}\left(  xZ_{n-k:n}\right)  /\overline{F}\left(  Z_{n-k:n}%
\right)  .$ This implies that $\xi_{n}\left(  s\right)  \geq2^{-1}%
x^{1\pm\epsilon_{0}}\geq2^{-1}p^{\gamma\pm\epsilon_{0}}>0.$ Then $\xi
_{n}\left(  s\right)  Z_{n-k:n}\rightarrow\infty$ a.s. and therefore, from the
left-hand side of $\left(  \ref{Potter}\right)  $\ (applied to $\overline
{H}),$ we have $\overline{H}\left(  Z_{n-k:n}\right)  /\overline{H}\left(
\xi_{n}\left(  s\right)  Z_{n-k:n}\right)  =O\left(  \left(  \xi_{n}\left(
s\right)  \right)  ^{1/\gamma\pm\epsilon_{0}}\right)  $ a.s. uniformly on $s.$
This allows us to write that%
\[
\sqrt{k}\mathbb{T}_{n1}^{\left(  2\right)  }\left(  x\right)  =O\left(
1\right)  \frac{k/n}{\overline{H}\left(  Z_{n-k:n}\right)  }\int
_{0}^{2x^{-1/\gamma_{1}\pm\epsilon_{0}}}\left(  \xi_{n}\left(  s\right)
\right)  ^{1/\gamma\pm\epsilon_{0}}\left\vert \beta_{n}\left(  \xi_{n}\left(
s\right)  \right)  \right\vert ds,\text{ a.s.}%
\]
By combining Corollary 2.2.2 with Proposition B.1.10 in \cite{deHF06}, we have
$\left(  n/k\right)  \overline{H}\left(  Z_{n-k:n}\right)  \overset
{\mathbf{p}}{\rightarrow}1,$ hence%
\[
\sqrt{k}\mathbb{T}_{n1}^{\left(  2\right)  }\left(  x\right)  =O_{\mathbf{p}%
}\left(  1\right)  \int_{0}^{2x^{-1/\gamma_{1}\pm\epsilon_{0}}}\left(  \xi
_{n}\left(  s\right)  \right)  ^{1/\gamma\pm\epsilon_{0}}\left\vert \beta
_{n}\left(  \xi_{n}\left(  s\right)  \right)  \right\vert ds.
\]
From $\left(  \ref{Potter}\right)  ,$ we infer that $0<s<2x_{0}^{-1/\gamma
_{1}\pm\epsilon_{0}}=:s_{0}.$ On the other hand, in view of assertion $\left(
ii\right)  $ of Lemma \ref{Lemma2}, we have $\sup\nolimits_{0<s<s_{0}}\left(
\xi_{n}\left(  s\right)  \right)  ^{\left(  1-\eta\right)  /\gamma}\left\vert
\beta_{n}\left(  \xi_{n}\left(  s\right)  \right)  \right\vert =o_{\mathbf{p}%
}\left(  1\right)  ,$ therefore%
\[
\sqrt{k}\mathbb{T}_{n1}^{\left(  2\right)  }\left(  x\right)  =o_{\mathbf{p}%
}\left(  1\right)  \int_{0}^{2x^{-1/\gamma_{1}\pm\epsilon_{0}}}\left(  \xi
_{n}\left(  s\right)  \right)  ^{\eta/\gamma\pm\epsilon_{0}}ds.
\]
Note that $\left(  1-\eta\right)  /\gamma\pm\epsilon_{0}>0,$ then by using the
right-hand side of $\left(  \ref{Potter}\right)  $ (applied to
$F^{\mathbb{\leftarrow}}\left(  1-\cdot\right)  ),$ we get%
\[
\sqrt{k}\mathbb{T}_{n1}^{\left(  2\right)  }\left(  x\right)  =o_{\mathbf{p}%
}\left(  1\right)  \int_{0}^{2x^{-1/\gamma_{1}\pm\epsilon_{0}}}s^{-\eta
\gamma_{1}/\gamma\pm\epsilon_{0}}ds,
\]
which equals $o_{\mathbf{p}}\left(  x^{\eta/\gamma-1/\gamma_{1}\pm\epsilon
_{0}}\right)  .$ Recall that $\gamma_{1}=\gamma/p,$ then it is easy to verify
that $\sqrt{k}\mathbb{T}_{n1}^{\left(  2\right)  }\left(  x\right)
=o_{\mathbf{p}}\left(  x^{\left(  2\eta-p\right)  /\gamma\pm\epsilon_{0}%
}\right)  .$ By using similar arguments, we also show that $\sqrt{k}%
\mathbb{T}_{n1}^{\left(  i\right)  }\left(  x\right)  =o_{\mathbf{p}}\left(
x^{\left(  2\eta-p\right)  /\gamma\pm\epsilon_{0}}\right)  ,$ $i=3,5,$
therefore we omit the details. For the term $\mathbb{T}_{n4}\left(  x\right)
,$ we have%
\[
\sqrt{k}\left\vert \mathbb{T}_{n1}^{\left(  4\right)  }\left(  x\right)
\right\vert \leq\frac{k}{n}\int_{0}^{x^{-1/\gamma_{1}}}\frac{\left\vert
\beta_{n}\left(  \xi_{n}\left(  s\right)  \right)  -\beta_{n}\left(
s^{-\gamma_{1}}\right)  \right\vert }{\overline{H}\left(  s^{-\gamma_{1}%
}h\right)  }ds.
\]
In view of $\left(  \ref{Potter}\right)  $ with the fact that $\overline
{H}\left(  h\right)  =k/n,$ we have%
\[
\sqrt{k}\mathbb{T}_{n1}^{\left(  4\right)  }\left(  x\right)  =O_{\mathbf{p}%
}\left(  1\right)  \int_{0}^{x^{-1/\gamma_{1}}}s^{-\gamma_{1}/\gamma
\pm\epsilon_{0}}\left\vert \beta_{n}\left(  \xi_{n}\left(  s\right)  \right)
-\beta_{n}\left(  s^{-\gamma_{1}}\right)  \right\vert ds.
\]
From assertion $\left(  ii\right)  $ of Lemma \ref{Lemma2}, we have $\beta
_{n}\left(  \xi_{n}\left(  s\right)  -s^{-\gamma_{1}}\right)  =o_{\mathbf{p}%
}\left(  s^{\left(  1-\eta\right)  \gamma_{1}/\gamma}\right)  ,$ uniformly on
$0<s<x^{-1/\gamma_{1}},$ then after elementary calculation, we end up with
$\sqrt{k}\mathbb{T}_{n1}^{\left(  4\right)  }\left(  x\right)  =o_{\mathbf{p}%
}\left(  x^{\left(  2\eta-p\right)  /\gamma\pm\epsilon_{0}}\right)  .$ As for
the first term $\mathbb{T}_{n1}^{\left(  1\right)  }\left(  x\right)  ,$ it
suffices to use Potter's inequalities $\left(  \ref{Potter}\right)  $ (for
$\overline{F}$ and $\overline{H})$ to get
\[
\sqrt{k}\mathbb{T}_{n1}^{\left(  1\right)  }\left(  x\right)  =x^{-1/\gamma
_{2}}\beta_{n}\left(  x\right)  +o_{\mathbf{p}}\left(  x^{\left(
2\eta-p\right)  /\gamma\pm\epsilon_{0}}\right)  .
\]
Finally, for $\mathbb{T}_{n1}^{\left(  6\right)  }\left(  x\right)  $ we
observe that $\sqrt{k}\mathbb{T}_{n1}^{\left(  6\right)  }\left(  x\right)
=\int_{0}^{x^{-1/\gamma_{1}}}s^{\gamma_{1}/\gamma}\beta_{n}\left(
s^{-\gamma_{1}}\right)  ds,$ which by a change of variables meets the second
term in $\left(  \ref{TN1x}\right)  .$ Let us now consider the term
$\mathbb{T}_{n2}\left(  x\right)  .$ First, notice that%
\[
\int_{0}^{w}\frac{d\left(  H_{n}^{\left(  0\right)  }\left(  v\right)
-H^{\left(  0\right)  }\left(  v\right)  \right)  }{\overline{H}\left(
v\right)  }=-\int_{0}^{w}\frac{d\left(  \overline{H}_{n}^{\left(  0\right)
}\left(  v\right)  -\overline{H}^{\left(  0\right)  }\left(  v\right)
\right)  }{\overline{H}\left(  v\right)  },
\]
which, after an integration by parts, becomes%
\[
\left(  \overline{H}_{n}^{\left(  0\right)  }\left(  0\right)  -\overline
{H}^{\left(  0\right)  }\left(  0\right)  \right)  -\frac{\overline{H}%
_{n}^{\left(  0\right)  }\left(  w\right)  -\overline{H}^{\left(  0\right)
}\left(  w\right)  }{\overline{H}\left(  w\right)  }-\int_{0}^{w}\left(
\overline{H}_{n}^{\left(  0\right)  }\left(  v\right)  -\overline{H}^{\left(
0\right)  }\left(  v\right)  \right)  \frac{d\overline{H}\left(  v\right)
}{\overline{H}^{2}\left(  v\right)  }.
\]
It follows that $\mathbb{T}_{n2}\left(  x\right)  $ may be written into the
sum of%
\[
\mathbb{T}_{n2}^{\left(  1\right)  }\left(  x\right)  :=-\left(  \overline
{H}_{n}^{\left(  0\right)  }\left(  0\right)  -\overline{H}^{\left(  0\right)
}\left(  0\right)  \right)  \int_{xZ_{n-k:n}}^{\infty}\frac{\overline
{F}\left(  w\right)  }{\overline{F}\left(  Z_{n-k:n}\right)  }\frac
{d\overline{H}^{\left(  1\right)  }\left(  w\right)  }{\overline{H}\left(
w\right)  },
\]%
\[
\mathbb{T}_{n2}^{\left(  2\right)  }\left(  x\right)  :=\int_{xZ_{n-k:n}%
}^{\infty}\frac{\overline{H}_{n}^{\left(  0\right)  }\left(  w\right)
-\overline{H}^{\left(  0\right)  }\left(  w\right)  }{\overline{H}\left(
w\right)  }\frac{\overline{F}\left(  w\right)  }{\overline{F}\left(
Z_{n-k:n}\right)  }\frac{d\overline{H}^{\left(  1\right)  }\left(  w\right)
}{\overline{H}\left(  w\right)  },
\]
and%
\[
\mathbb{T}_{n2}^{\left(  3\right)  }\left(  x\right)  :=%
%TCIMACRO{\dint _{xZ_{n-k:n}}^{\infty}}%
%BeginExpansion
{\displaystyle\int_{xZ_{n-k:n}}^{\infty}}
%EndExpansion
\dfrac{\overline{F}\left(  w\right)  }{\overline{F}\left(  Z_{n-k:n}\right)
}\left\{
%TCIMACRO{\dint _{0}^{w}}%
%BeginExpansion
{\displaystyle\int_{0}^{w}}
%EndExpansion
\left(  \overline{H}_{n}^{\left(  0\right)  }\left(  v\right)  -\overline
{H}^{\left(  0\right)  }\left(  v\right)  \right)  \dfrac{d\overline{H}\left(
v\right)  }{\overline{H}^{2}\left(  v\right)  }\right\}  \dfrac{d\overline
{H}^{\left(  1\right)  }\left(  w\right)  }{\overline{H}\left(  w\right)  }.
\]
For the first term, we replace $d\overline{H}^{\left(  1\right)  }\left(
w\right)  $ and $\overline{H}\left(  w\right)  $ by $\overline{G}\left(
w\right)  d\overline{F}\left(  w\right)  $ and $\overline{G}\left(  w\right)
\overline{F}\left(  w\right)  $ respectively and we get%
\[
\mathbb{T}_{n2}^{\left(  1\right)  }\left(  x\right)  =\left(  \overline
{H}_{n}^{\left(  0\right)  }\left(  0\right)  -\overline{H}^{\left(  0\right)
}\left(  0\right)  \right)  \frac{\overline{F}\left(  xZ_{n-k:n}\right)
}{\overline{F}\left(  Z_{n-k:n}\right)  }.
\]
By using the routine manipulations of Potter's inequalities $\left(
\ref{Potter}\right)  $\ (applied to $\overline{F}),$ we obtain $\mathbb{T}%
_{n2}^{\left(  1\right)  }\left(  x\right)  =\left(  \overline{H}_{n}^{\left(
0\right)  }\left(  0\right)  -\overline{H}^{\left(  0\right)  }\left(
0\right)  \right)  O_{\mathbf{p}}\left(  x^{-1/\gamma_{1}\pm\epsilon_{0}%
}\right)  .$ On the other hand, by the central limit theorem, we have
$\overline{H}_{n}^{\left(  0\right)  }\left(  0\right)  -\overline{H}^{\left(
0\right)  }\left(  0\right)  =O_{\mathbf{p}}\left(  n^{-1/2}\right)  ,$ as
$n\rightarrow\infty,$ it follows that $\sqrt{k}\mathbb{T}_{n2}^{\left(
1\right)  }\left(  x\right)  =\sqrt{k/n}O_{\mathbf{p}}\left(  x^{-1/\gamma
_{1}\pm\epsilon_{0}}\right)  .$ Since $x^{-1/\gamma_{1}\pm\epsilon_{0}%
}=O\left(  x^{\left(  2\eta-p\right)  /\gamma\pm\epsilon_{0}}\right)  $ and
$k/n\rightarrow0,$ then $\sqrt{k}\mathbb{T}_{n2}^{\left(  1\right)  }\left(
x\right)  =o_{\mathbf{p}}\left(  x^{\left(  2\eta-p\right)  /\gamma\pm
\epsilon_{0}}\right)  .$ It is easy to verify that $\mathbb{T}_{n2}^{\left(
2\right)  }\left(  x\right)  $ may be rewritten into%
\[
\sqrt{k}\mathbb{T}_{n2}^{\left(  2\right)  }\left(  x\right)  =\int
_{x}^{\infty}\frac{\widetilde{\beta}_{n}\left(  w\right)  }{\overline
{H}\left(  wZ_{n-k:n}\right)  }\frac{d\overline{F}\left(  wZ_{n-k:n}\right)
}{\overline{F}\left(  Z_{n-k:n}\right)  }.
\]
By using similar arguments as those used for $\mathbb{T}_{n1}\left(  x\right)
,$ we show that
\[
\sqrt{k}\mathbb{T}_{n2}^{\left(  2\right)  }\left(  x\right)  =-\frac
{p}{\gamma}\int_{x}^{\infty}w^{1/\gamma_{2}-1}\widetilde{\beta}_{n}\left(
w\right)  dw+o_{\mathbf{p}}\left(  x^{\left(  2\eta-p\right)  /\gamma
\pm\epsilon_{0}}\right)  ,
\]
therefore we omit the details. As for the third term $\mathbb{T}_{n2}^{\left(
3\right)  }\left(  x\right)  ,$ we have%
\[
\mathbb{T}_{n2}^{\left(  3\right)  }\left(  x\right)  =\int_{x}^{\infty
}\left\{  \int_{0}^{wZ_{n-k:n}}\left(  \overline{H}_{n}^{\left(  0\right)
}\left(  v\right)  -\overline{H}^{\left(  0\right)  }\left(  v\right)
\right)  \frac{d\overline{H}\left(  v\right)  }{\overline{H}^{2}\left(
v\right)  }\right\}  d\frac{\overline{F}\left(  wZ_{n-k:n}\right)  }%
{\overline{F}\left(  Z_{n-k:n}\right)  }.
\]
After a change of variables and an integration by parts in the first integral,
we apply Potter's inequalities $\left(  \ref{Potter}\right)  $ to
$\overline{F},$ to show that%
\[
\sqrt{k}\mathbb{T}_{n2}^{\left(  3\right)  }\left(  x\right)  =-\frac
{p}{\gamma}\int_{x}^{\infty}w^{-1/\gamma_{1}-1}\left\{  \int_{0}^{wZ_{n-k:n}%
}\sqrt{k}\left(  \overline{H}_{n}^{\left(  0\right)  }\left(  v\right)
-\overline{H}^{\left(  0\right)  }\left(  v\right)  \right)  \frac
{d\overline{H}\left(  v\right)  }{\overline{H}^{2}\left(  v\right)  }\right\}
dw+\sqrt{k}R_{n3}\left(  x\right)  ,
\]
where%
\[
\sqrt{k}R_{n3}\left(  x\right)  :=o_{\mathbb{P}}\left(  x^{-1/\gamma_{1}%
\pm\epsilon_{0}}\right)  \int_{x}^{\infty}w^{-1/\gamma_{1}-1}\left\{  \int
_{0}^{wZ_{n-k:n}}\sqrt{k}\left\vert \overline{H}_{n}^{\left(  0\right)
}\left(  v\right)  -\overline{H}^{\left(  0\right)  }\left(  v\right)
\right\vert \frac{dH\left(  v\right)  }{\overline{H}^{2}\left(  v\right)
}\right\}  dw.
\]
By using once again assertion $\left(  ii\right)  $ in Proposition
\ref{Propo1} with the fact that $\overline{H}^{\left(  0\right)  }%
<\overline{H}$ together with Potter's inequalities $\left(  \ref{Potter}%
\right)  $\ routine manipulations, we readily show that $\sqrt{k}R_{n3}\left(
x\right)  =o_{\mathbf{p}}\left(  x^{\left(  2\eta-p\right)  /\gamma\pm
\epsilon_{0}}\right)  ,$ uniformly on $x\geq p^{\gamma},$ therefore we omit
the details. Recall that $\gamma_{1}=\gamma/p$ and let us rewrite the first
term in $\mathbb{T}_{n2}^{\left(  3\right)  }\left(  x\right)  $ into%
\[
\int_{x}^{\infty}\left\{  \int_{0}^{wZ_{n-k:n}}\sqrt{k}\left(  \overline
{H}_{n}^{\left(  0\right)  }\left(  v\right)  -\overline{H}^{\left(  0\right)
}\left(  v\right)  \right)  \frac{d\overline{H}\left(  v\right)  }%
{\overline{H}^{2}\left(  v\right)  }\right\}  dw^{-1/\gamma_{1}},
\]
which, by an integration by parts, becomes%
\begin{align*}
&  -\int_{0}^{xZ_{n-k:n}}\sqrt{k}\left(  \overline{H}_{n}^{\left(  0\right)
}\left(  v\right)  -\overline{H}^{\left(  0\right)  }\left(  v\right)
\right)  \frac{d\overline{H}\left(  v\right)  }{\overline{H}^{2}\left(
v\right)  }\\
&  -\int_{x}^{\infty}w^{-1/\gamma_{1}}\sqrt{k}\left(  \overline{H}%
_{n}^{\left(  0\right)  }\left(  wZ_{n-k:n}\right)  -\overline{H}^{\left(
0\right)  }\left(  wZ_{n-k:n}\right)  \right)  \frac{d\overline{H}\left(
wZ_{n-k:n}\right)  }{\overline{H}^{2}\left(  wZ_{n-k:n}\right)  }.
\end{align*}
The first term above may be rewritten into $-\dfrac{k}{n}%
%TCIMACRO{\dint _{0}^{x}}%
%BeginExpansion
{\displaystyle\int_{0}^{x}}
%EndExpansion
\widetilde{\beta}_{n}\left(  w\right)  d\overline{H}\left(  wZ_{n-k:n}\right)
/\overline{H}^{2}\left(  wZ_{n-k:n}\right)  $ and by similar arguments as
those used for $\mathbb{T}_{n1}\left(  x\right)  ,$ we show that the second
term equals $\dfrac{1}{\gamma}\int_{x}^{\infty}w^{1/\gamma_{2}-1}%
\widetilde{\beta}_{n}\left(  w\right)  dw+o_{\mathbf{p}}\left(  x^{\left(
2\eta-p\right)  /\gamma\pm\epsilon_{0}}\right)  $ and thus we have%
\[
\sqrt{k}\mathbb{T}_{n2}^{\left(  3\right)  }\left(  x\right)  =-\frac{k}%
{n}\int_{0}^{x}\widetilde{\beta}_{n}\left(  w\right)  \frac{d\overline
{H}\left(  wZ_{n-k:n}\right)  }{\overline{H}^{2}\left(  wZ_{n-k:n}\right)
}+\frac{1}{\gamma}\int_{x}^{\infty}w^{1/\gamma_{2}-1}\widetilde{\beta}%
_{n}\left(  w\right)  dw+o_{\mathbf{p}}\left(  x^{\left(  2\eta-p\right)
/\gamma\pm\epsilon_{0}}\right)  .
\]
Consequently, we have%
\[
\sqrt{k}\mathbb{T}_{n2}\left(  x\right)  =-x^{-1/\gamma_{1}}\frac{k}{n}%
\int_{0}^{x}\widetilde{\beta}_{n}\left(  w\right)  \frac{d\overline{H}\left(
wZ_{n-k:n}\right)  }{\overline{H}^{2}\left(  wZ_{n-k:n}\right)  }+\frac
{q}{\gamma}\int_{x}^{\infty}w^{1/\gamma_{2}-1}\widetilde{\beta}_{n}\left(
w\right)  dw+o_{\mathbf{p}}\left(  x^{\left(  2\eta-p\right)  /\gamma
\pm\epsilon_{0}}\right)  .
\]
For the term $\mathbb{T}_{n3}\left(  x\right)  ,$ routine manipulations lead
to%
\begin{align*}
\sqrt{k}\mathbb{T}_{n3}\left(  x\right)   &  =x^{-1/\gamma_{1}}\frac{k}{n}%
\int_{0}^{x}\left(  \beta_{n}\left(  w\right)  +\widetilde{\beta}_{n}\left(
w\right)  \right)  \frac{d\overline{H}^{\left(  0\right)  }\left(
wZ_{n-k:n}\right)  }{\overline{H}^{2}\left(  wZ_{n-k:n}\right)  }\\
&  -\frac{q}{\gamma}\int_{x}^{\infty}w^{-1/\gamma_{2}-1}\left(  \beta
_{n}\left(  w\right)  +\widetilde{\beta}_{n}\left(  w\right)  \right)
dw+\sqrt{k}R_{n4}\left(  x\right)  ,
\end{align*}
where%
\begin{align*}
\sqrt{k}R_{n4}\left(  x\right)   &  :=o_{\mathbf{p}}\left(  1\right)
x^{-1/\gamma_{1}\pm\epsilon_{0}}\dfrac{k}{n}%
%TCIMACRO{\dint _{0}^{x}}%
%BeginExpansion
{\displaystyle\int_{0}^{x}}
%EndExpansion
\left(  \left\vert \beta_{n}\left(  w\right)  \right\vert +\left\vert
\widetilde{\beta}_{n}\left(  w\right)  \right\vert \right)  \dfrac{dH^{\left(
0\right)  }\left(  wZ_{n-k:n}\right)  }{\overline{H}^{2}\left(  wZ_{n-k:n}%
\right)  }\\
&  +\frac{q}{\gamma}\int_{x}^{\infty}w^{-1/\gamma_{2}-1}\left(  \left\vert
\beta_{n}\left(  w\right)  \right\vert +\left\vert \widetilde{\beta}%
_{n}\left(  w\right)  \right\vert \right)  dw.
\end{align*}
By a similar treatment as that of $\sqrt{k}R_{n3}\left(  x\right)  ,$ we get
$\sqrt{k}R_{n4}\left(  x\right)  =o_{\mathbf{p}}\left(  x^{\left(
2\eta-p\right)  /\gamma\pm\epsilon_{0}}\right)  .$ By substituting the results
obtained above, for the terms $\mathbb{T}_{nj}\left(  x\right)  ,$ $j=1,2,3,$
in equation $\left(  \ref{MN1-decompos}\right)  ,$ we end up with%
\begin{align*}
\sqrt{k}\mathbb{M}_{n1}\left(  x\right)   &  =x^{1/\gamma_{2}}\beta_{n}\left(
x\right)  +x^{-1/\gamma_{1}}\dfrac{k}{n}\left\{
%TCIMACRO{\dint _{0}^{x}}%
%BeginExpansion
{\displaystyle\int_{0}^{x}}
%EndExpansion
\beta_{n}\left(  w\right)  \dfrac{d\overline{H}^{\left(  0\right)  }\left(
wZ_{n-k:n}\right)  }{\overline{H}^{2}\left(  wZ_{n-k:n}\right)  }\right. \\
&  \left.  -\int_{0}^{x}\widetilde{\beta}_{n}\left(  w\right)  \frac
{d\overline{H}^{\left(  1\right)  }\left(  wZ_{n-k:n}\right)  }{\overline
{H}^{2}\left(  wZ_{n-k:n}\right)  }\right\}  +o_{\mathbf{p}}\left(  x^{\left(
2\eta-p\right)  /\gamma\pm\epsilon_{0}}\right)  .
\end{align*}
The asymptotic negligibility (in probability) of $\sqrt{k}\mathbb{M}%
_{n2}\left(  x\right)  $ is readily obtained. Indeed, note that we have
$\sqrt{k}\mathbb{M}_{n1}\left(  x\right)  =O_{\mathbf{p}}\left(  x^{\left(
2\eta-p\right)  /\gamma\pm\epsilon_{0}}\right)  $ and from Theorem 2\textbf{
}in \cite{Cs96}, we infer that $\overline{F}\left(  Z_{n-k:n}\right)
/\overline{F}_{n}\left(  Z_{n-k:n}\right)  -1=O_{\mathbf{p}}\left(
k^{-1/2}\right)  .$ This means that $\sqrt{k}\mathbb{M}_{n2}\left(  x\right)
=o_{\mathbf{p}}\left(  x^{\left(  2\eta-p\right)  /\gamma\pm\epsilon_{0}%
}\right)  $ (because $k\rightarrow\infty).$ Recall now that%
\[
\sqrt{k}\mathbb{M}_{n3}\left(  x\right)  =-\frac{\overline{F}\left(
xZ_{n-k:n}\right)  }{\overline{F}\left(  Z_{n-k:n}\right)  }\sqrt{k}%
\mathbb{M}_{n1}\left(  1\right)  ,
\]
which, by applying Potter's inequalities $\left(  \ref{Potter}\right)  $ to
$\overline{F},$ yields that $\sqrt{k}\mathbb{M}_{n3}\left(  x\right)  $ is
equal to%
\begin{align*}
&  -x^{1/\gamma_{1}}\beta_{n}\left(  x\right)  -x^{-1/\gamma_{1}}\frac{k}%
{n}\left\{  \int_{0}^{1}\beta_{n}\left(  w\right)  \frac{d\overline
{H}^{\left(  0\right)  }\left(  wZ_{n-k:n}\right)  }{\overline{H}^{2}\left(
wZ_{n-k:n}\right)  }\right. \\
&  \ \ \ \ \ \ \ \ \ \ \ \ \ \left.  -\int_{0}^{x}\widetilde{\beta}_{n}\left(
w\right)  \frac{d\overline{H}^{\left(  1\right)  }\left(  wZ_{n-k:n}\right)
}{\overline{H}^{2}\left(  wZ_{n-k:n}\right)  }\right\}  +o_{\mathbf{p}}\left(
x^{\left(  2\eta-p\right)  /\gamma\pm\epsilon_{0}}\right)  .
\end{align*}
Therefore%
\[
\sum_{i=1}^{3}\sqrt{k}\mathbb{M}_{ni}\left(  x\right)  =x^{1/\gamma_{2}}%
\beta_{n}\left(  x\right)  -x^{-1/\gamma_{1}}\beta_{n}\left(  1\right)
+\mathcal{D}_{n}\left(  \beta_{n},\widetilde{\beta}_{n};x\right)
+o_{\mathbf{p}}\left(  x^{\left(  2\eta-p\right)  /\gamma\pm\epsilon_{0}%
}\right)  ,
\]
where%
\[
\mathcal{D}_{n}\left(  \beta_{n},\widetilde{\beta}_{n};x\right)
:=x^{-1/\gamma_{1}}\frac{k}{n}\left\{  \int_{1}^{x}\beta_{n}\left(  w\right)
\frac{d\overline{H}^{\left(  0\right)  }\left(  wZ_{n-k:n}\right)  }%
{\overline{H}^{2}\left(  wZ_{n-k:n}\right)  }-\int_{1}^{x}\widetilde{\beta
}_{n}\left(  w\right)  \frac{d\overline{H}^{\left(  1\right)  }\left(
wZ_{n-k:n}\right)  }{\overline{H}^{2}\left(  wZ_{n-k:n}\right)  }\right\}  ,
\]
which, by routine manipulations as the above, is shown to be equal to%
\[
x^{-1/\gamma_{1}}\left\{  \gamma_{1}^{-1}\int_{1}^{x}w^{1/\gamma-1}%
\widetilde{\beta}_{n}\left(  w\right)  dw-\gamma_{1}^{-1}\int_{1}%
^{x}w^{1/\gamma-1}\beta_{n}\left(  w\right)  dw\right\}  +o_{\mathbf{p}%
}\left(  x^{\left(  2\eta-p\right)  /\gamma\pm\epsilon_{0}}\right)  .
\]
Therefore, we have%
\begin{align*}
\sum_{i=1}^{3}\sqrt{k}\mathbb{M}_{ni}\left(  x\right)   &  =x^{1/\gamma_{2}%
}\beta_{n}\left(  x\right)  +o_{\mathbf{p}}\left(  x^{\left(  2\eta-p\right)
/\gamma\pm\epsilon_{0}}\right) \\
&  -x^{-1/\gamma_{1}}\left\{  \beta_{n}\left(  1\right)  -\gamma_{1}^{-1}%
\int_{1}^{x}w^{1/\gamma-1}\widetilde{\beta}_{n}\left(  w\right)  dw-\gamma
_{1}^{-1}\int_{1}^{x}w^{1/\gamma-1}\beta_{n}\left(  w\right)  dw\right\}  .
\end{align*}
We are now in position to apply the well-known Gaussian approximation $\left(
\ref{approx2}\right)  $ to get%
\begin{align*}
&  \sum_{i=1}^{3}\sqrt{k}\mathbb{M}_{ni}\left(  x\right)  =x^{1/\gamma_{2}%
}\sqrt{\frac{n}{k}}\mathbf{B}_{n}\left(  x\right)  -x^{-1/\gamma_{1}}%
\sqrt{\frac{n}{k}}\mathbf{B}_{n}\left(  1\right) \\
&  +x^{-1/\gamma_{1}}\left\{  \int_{1}^{x}\frac{\mathbf{B}_{n}^{\ast}\left(
w\right)  }{\overline{H}^{2}\left(  w\right)  }dH^{1}\left(  w\right)
-\int_{0}^{1}\frac{\mathbf{B}_{n}\left(  w\right)  }{\overline{H}^{2}\left(
w\right)  }dH\left(  w\right)  \right\}  +o_{\mathbf{p}}\left(  x^{\left(
2\eta-p\right)  /\gamma\pm\epsilon_{0}}\right)  ,
\end{align*}
where $\mathbf{B}_{n}\left(  w\right)  $ and $\mathbf{B}_{n}^{\ast}\left(
w\right)  $ are two Gaussian processes defined by%
\[
\mathbf{B}_{n}\left(  w\right)  :=B_{n}\left(  \theta\right)  -B_{n}\left(
\theta-\overline{H}^{\left(  1\right)  }\left(  wZ_{n-k:n}\right)  \right)
\text{ and }\widetilde{\mathbf{B}}_{n}\left(  w\right)  :=-B_{n}\left(
1-\overline{H}^{\left(  0\right)  }\left(  wZ_{n-k:n}\right)  \right)  .
\]
By similar arguments as those used in Lemma 5.2 in \cite{BMN-2015}, we end up
with
\begin{align*}
&
%TCIMACRO{\dsum _{i=1}^{3}}%
%BeginExpansion
{\displaystyle\sum_{i=1}^{3}}
%EndExpansion
\sqrt{k}\mathbb{M}_{ni}\left(  x\right) \\
&  =x^{1/\gamma_{2}}\sqrt{\dfrac{n}{k}}\mathbb{B}_{n}\left(  \dfrac{k}%
{n}x^{-1/\gamma}\right)  -x^{-1/\gamma_{1}}\sqrt{\dfrac{n}{k}}\mathbb{B}%
_{n}\left(  \dfrac{k}{n}\right) \\
&  +\frac{x^{-1/\gamma_{1}}}{\gamma}\sqrt{\frac{n}{k}}\int_{1}^{x}%
u^{1/\gamma-1}\left(  p\widetilde{\mathbb{B}}_{n}\left(  \frac{k}%
{n}u^{-1/\gamma}\right)  -q\mathbb{B}_{n}\left(  \frac{k}{n}u^{-1/\gamma
}\right)  \right)  du+o_{\mathbf{p}}\left(  x^{\left(  2\eta-p\right)
/\gamma\pm\epsilon_{0}}\right)  ,
\end{align*}
where $\mathbb{B}_{n}\left(  s\right)  $ and $\widetilde{\mathbb{B}}%
_{n}\left(  s\right)  ,$ $0<s<1,$ are sequences of centred Gaussian processes
defined by $\mathbb{B}_{n}\left(  s\right)  :=B_{n}\left(  \theta\right)
-B_{n}\left(  \theta-ps\right)  $ and $\widetilde{\mathbb{B}}_{n}\left(
s\right)  :=-B_{n}\left(  1-qs\right)  .$ Let $\left\{  W_{n}\left(  t\right)
;\text{ }0\leq t\leq1\right\}  $ be a sequence of Weiner processes defined on
$\left(  \Omega,\mathcal{A},\mathbb{P}\right)  $ so that
\begin{equation}
\left\{  B_{n}\left(  t\right)  ;0\leq t\leq1\right\}  \overset{\mathcal{D}%
}{=}\left\{  W_{n}\left(  t\right)  -tW_{n}\left(  1\right)  ;0\leq
t\leq1\right\}  . \label{W}%
\end{equation}
It is easy to verify that $\sum_{i=1}^{3}\sqrt{k}\mathbb{M}_{ni}\left(
x\right)  =J_{n}\left(  x\right)  +o_{\mathbf{p}}\left(  x^{\left(
2\eta-p\right)  /\gamma\pm\epsilon_{0}}\right)  ,$ which is exactly $\left(
\ref{aproxima-MI}\right)  .$ Finally, we take care of the term $\mathbb{M}%
_{n4}\left(  x\right)  .$ To this end, we apply the uniform inequality of
second-order regularly varying functions to $\overline{F}$ (see, e.g., the
bottom of page 161 in \cite{deHF06}), to write%
\[
\frac{\overline{F}\left(  xZ_{n-k:n}\right)  }{\overline{F}\left(
Z_{n-k:n}\right)  }-x^{-1/\gamma_{1}}=\left(  1+o_{\mathbf{p}}\left(
1\right)  \right)  A_{1}\left(  Z_{n-k:n}\right)  x^{-1/\gamma_{1}}%
\dfrac{x^{\tau/\gamma_{1}}-1}{\tau/\gamma_{1}},
\]
and%
\[
\frac{\overline{F}\left(  xh\right)  }{\overline{F}\left(  h\right)
}-x^{-1/\gamma_{1}}=\left(  1+o\left(  1\right)  \right)  A_{1}\left(
h\right)  x^{-1/\gamma_{1}}\dfrac{x^{\tau/\gamma_{1}}-1}{\tau/\gamma_{1}},
\]
uniformly on $x\geq p^{\gamma}.$ Since $A_{1}$ is regularly varying at
infinity (with index $\tau\gamma_{1})$ and $Z_{n-k:n}/h\overset{\mathbf{p}%
}{\rightarrow}1,$ it follows that $A_{1}\left(  Z_{n-k:n}\right)  =\left(
1+o_{\mathbf{p}}\left(  1\right)  \right)  A_{1}\left(  h\right)  ,,$
therefore
\[
\sqrt{k}\mathbb{M}_{n4}\left(  x\right)  =\left(  1+o_{\mathbf{p}}\left(
1\right)  \right)  x^{-1/\gamma_{1}}\dfrac{x^{\tau/\gamma_{1}}-1}{\tau
/\gamma_{1}}\sqrt{k}A_{1}\left(  h\right)  ,\text{ as }n\rightarrow\infty.
\]
By assumption we have $\sqrt{k}A_{1}\left(  h\right)  =O\left(  1\right)  ,$
then%
\[
D_{n}\left(  x\right)  -J_{n}\left(  x\right)  -x^{-1/\gamma_{1}}%
\dfrac{x^{\tau/\gamma_{1}}-1}{\tau/\gamma_{1}}\sqrt{k}A_{1}\left(  h\right)
=o_{\mathbf{p}}\left(  x^{\left(  2\eta-p\right)  /\gamma\pm\epsilon_{0}%
}\right)  .
\]
Let $\eta_{0}>0$ such that $1/4<\eta<\eta_{0}<p/2,$ then
\[
x^{\left(  p-2\eta_{0}\right)  /\gamma}\left\{  D_{n}\left(  x\right)
-J_{n}\left(  x\right)  -x^{-1/\gamma_{1}}\dfrac{x^{\tau/\gamma_{1}}-1}%
{\tau/\gamma_{1}}\sqrt{k}A_{1}\left(  h\right)  \right\}  =o_{\mathbf{p}%
}\left(  x^{2\left(  \eta-\eta_{0}\right)  /\gamma\pm\epsilon_{0}}\right)  .
\]
Now, we choose $\epsilon_{0}$ sufficiently small so that $\left(  \eta
-\eta_{0}\right)  /\gamma+\epsilon_{0}<0.$ Since $x\geq p^{\gamma}>0,$ then
$o_{\mathbf{p}}\left(  x^{2\left(  \eta-\eta_{0}\right)  /\gamma\pm
\epsilon_{0}}\right)  =o_{\mathbf{p}}\left(  1\right)  ,$ hence for
$1/4<\eta_{0}<p/4$%
\[
x^{\left(  p-2\eta_{0}\right)  /\gamma}\left\{  D_{n}\left(  x\right)
-J_{n}\left(  x\right)  -x^{-1/\gamma_{1}}\dfrac{x^{\tau/\gamma_{1}}-1}%
{\tau/\gamma_{1}}\sqrt{k}A_{1}\left(  h\right)  \right\}  =o_{\mathbf{p}%
}\left(  1\right)  ,
\]
uniformly on $x\geq p^{\gamma}.$ To achieve the proof it suffices to replace
$\gamma$ by $p\gamma_{1}$ and choose $\epsilon=p-2\eta_{0}$ so that
$0<\epsilon<1/2,$ as sought. \hfill$\Box$

\subsection{Proof of Theorem \ref{Theorem2}}

For the consistency of $\widehat{\gamma}_{1},$ we make an integration by parts
and a change of variables in equation $\left(  \ref{gchap}\right)  $ to get%
\[
\widehat{\gamma}_{1}=\int_{1}^{\infty}x^{-1}\frac{\overline{F}_{n}\left(
xZ_{n-k:n}\right)  }{\overline{F}_{n}\left(  Z_{n-k:n}\right)  }dx,
\]
which may be decomposed into the sum of $I_{1n}:=\int_{1}^{\infty}%
x^{-1}\overline{F}\left(  xZ_{n-k:n}\right)  /\overline{F}\left(
Z_{n-k:n}\right)  dx$ and $I_{2n}:=\int_{1}^{\infty}x^{-1}\sum_{i=1}%
^{3}\mathbb{M}_{ni}\left(  x\right)  dx.$ By using the regular variation of
$\overline{F}$ $\left(  \ref{R-F}\right)  $ and the corresponding Potter's
inequalities $\left(  \ref{Potter}\right)  ,$ we get $I_{1n}\overset
{\mathbf{P}}{\rightarrow}\gamma_{1}$ as $n\rightarrow\infty.$ Then, we just
need to show that $I_{2n}$ tends to zero in probability. From $\left(
\ref{aproxima-MI}\right)  $ we have%
\[
I_{2n}=\frac{1}{\sqrt{k}}\int_{1}^{\infty}x^{-1}J_{n}\left(  x\right)
dx+\frac{o_{\mathbf{p}}\left(  1\right)  }{\sqrt{k}}\int_{1}^{\infty
}x^{-1-\epsilon/\left(  p\gamma_{1}\right)  }dx,
\]
where the second integral above is finite and therefore the second term of
$I_{2n}$ is negligible in probability. On the other hand, we have $\int
_{1}^{\infty}x^{-1}J_{n}\left(  x\right)  dx=\int_{1}^{\infty}x^{-1}\left(
J_{1n}\left(  x\right)  +J_{2n}\left(  x\right)  \right)  dx,$ where
$J_{1n}\left(  x\right)  $ and $J_{2n}\left(  x\right)  $ are the two centred
Gaussian processes given in Theorem \ref{Theorem1}. After some elementary but
tedious manipulations of integral calculus, we obtain%
\begin{align}
&  \int_{1}^{\infty}x^{-1}J_{n}\left(  x\right)  dx\label{rep}\\
&  =\gamma\sqrt{\frac{n}{k}}\int_{0}^{1}s^{-q-1}\left(  \mathbf{W}%
_{n,2}\left(  \frac{k}{n}s\right)  +\left(  1-\frac{q}{p}\right)
\mathbf{W}_{n,1}\left(  \frac{k}{n}s\right)  \right)  ds-\gamma_{1}\sqrt
{\frac{n}{k}}\mathbf{W}_{n,1}\left(  \frac{k}{n}\right)  .\nonumber
\end{align}
Since $\left\{  W_{n}\left(  s\right)  ,\text{ }0\leq s\leq1\right\}  $ is a
sequence Weiner processes, we may readily show that%
\[
\sqrt{\dfrac{n}{k}}\mathbf{E}\left\vert \mathbf{W}_{n,1}\left(  \dfrac{k}%
{n}s\right)  \right\vert \leq\left(  ps\right)  ^{1/2}\text{ and }\sqrt
{\dfrac{n}{k}}\mathbf{E}\left\vert \mathbf{W}_{n,2}\left(  \dfrac{k}%
{n}s\right)  \right\vert \leq\left(  qs\right)  ^{1/2}.
\]
But $\gamma_{1}<\gamma_{2},$ hence $0<q<1/2$ and thus it is easy to verify
that $\mathbf{E}\left\vert \int_{1}^{\infty}x^{-1}J_{n}\left(  x\right)
dx\right\vert <\infty.$ This yields that $I_{2n}\overset{_{\mathbf{P}}%
}{\rightarrow}0$ when $n\rightarrow\infty$ (because $1/\sqrt{k}\rightarrow0),$
as sought. As for the Gaussian representation result, we write $\sqrt
{k}\left(  \widehat{\gamma}_{1}-\gamma_{1}\right)  =\int_{1}^{\infty}%
x^{-1}D_{n}\left(  x\right)  dx,$ then by applying Theorem $\ref{Theorem1}$
together with the representation $\left(  \ref{rep}\right)  $ and the
assumption, we have $\sqrt{k}A_{1}\left(  h\right)  \rightarrow\lambda,$ we
get $\sqrt{k}\left(  \widehat{\gamma}_{1}-\gamma_{1}\right)  =C_{1n}%
+C_{2n}+C_{3n}+\dfrac{\lambda}{1-\tau}+o_{\mathbf{p}}\left(  1\right)  ,$
where
\[
C_{1n}:=\gamma\sqrt{\frac{n}{k}}\int_{0}^{1}s^{-q-1}\mathbf{W}_{n,2}\left(
\frac{k}{n}s\right)  ds,\text{ }C_{2n}:=\gamma\left(  1-\frac{q}{p}\right)
\sqrt{\frac{n}{k}}\int_{0}^{1}s^{-q-1}\mathbf{W}_{n,1}\left(  \frac{k}%
{n}s\right)  ds
\]
and $C_{3n}:=-\gamma_{1}\sqrt{\dfrac{n}{k}}\mathbf{W}_{n,1}\left(  \dfrac
{k}{n}\right)  .$ The computation of the limit of $\mathbf{E}\left[
C_{1n}+C_{2n}+C_{3n}\right]  ^{2}$ gives%
\[
\frac{2q\gamma^{2}}{2q^{2}-3q+1}+\left(  1-\frac{q}{p}\right)  ^{2}%
\frac{2p\gamma^{2}}{2q^{2}-3q+1}+\frac{\gamma^{2}}{p}-2\frac{\gamma^{2}}%
{p}\left(  1-\frac{q}{p}\right)  =\frac{\gamma^{2}}{p\left(  2p-1\right)  },
\]
which by substituting $p\gamma_{1}$ for $\gamma$ completes the proof.\hfill
$\Box\medskip$

\noindent\textbf{Concluding notes\medskip}

\noindent On the basis of Nelson-Aalen nonparametric estimator, we introduced
a product-limit process for the tail of a heavy-tailed distribution of
randomly right-censored data. The Gaussian approximation of this process
proved to be a very useful tool in achieving the asymptotic normality of the
estimators of tail indices and related statistics. Furthermore, we defined a
Hill-type estimator for the extreme value index and determined its limiting
Gaussian distribution. Intensive simulations show that the latter outperforms
the already existing ones, with respect to bias and MSE. It is noteworthy that
the asymptotic behavior of the newly proposed estimator is only assessed under
the second-order condition of regular variation of the underlying distribution
tail, in contrast to the unfamiliar assumptions of \cite{WW2014} and
\cite{EnFG08}. This represents the main big improvement brought in this paper.
Our approach will have fruitful consequences and open interesting paths in the
statistical analysis of extremes with incomplete data. The generalization of
this approach to the whole range of maximum domains of attraction would make a
very good topic for a future work.

\section{\textbf{Appendix\label{sec6}}}

\noindent The following Proposition provides useful results with regards to
the uniform empirical and quantile functions $\mathbb{U}_{n}\left(  t\right)
$ and $\mathbb{V}_{n}\left(  t\right)  $ respectively.

\begin{proposition}
\label{Propo1}$\left(  i\right)  $ For $n\geq1$ and $0<a<1,$ we have%
\[
\sup_{a/n\leq t\leq1}\left(  t/\mathbb{U}_{n}\left(  t\right)  \right)
^{\pm1}=O_{\mathbf{p}}\left(  1\right)  =\sup_{a/n\leq t\leq1}\left(
t/\mathbb{V}_{n}\left(  t\right)  \right)  ^{\pm1}.
\]
$\left(  ii\right)  $ For $n\geq1,$ $0<b<1$ and $0<\eta<1/2,$ we have%
\[
\sup_{0<t\leq1}n^{\eta}\left\vert \mathbb{U}_{n}\left(  t\right)
-t\right\vert /t^{1-\eta}=O_{\mathbf{p}}\left(  1\right)  =\sup_{b/n\leq
t\leq1}n^{\eta}\left\vert \mathbb{V}_{n}\left(  t\right)  -t\right\vert
/t^{1-\eta}.
\]
$\left(  iii\right)  $ Let $\varphi$ be a regularly varying function at
infinity with index $\alpha$ and let $0<a_{n}<b_{n}<1$ be such that
$na_{n}=O\left(  1\right)  $ and $b_{n}\downarrow0.$ Then%
\[
\sup_{a_{n}\leq t\leq b_{n}}\left(  \varphi\left(  t\right)  /\varphi\left(
\mathbb{V}_{n}\left(  t\right)  \right)  \right)  =O_{\mathbf{p}}\left(
1\right)  .
\]

\end{proposition}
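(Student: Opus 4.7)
All three assertions are essentially consequences of the well-developed theory of uniform empirical and quantile processes, so the plan is to identify each one with a standard inequality (from Shorack-Wellner 1986 and from the CsCsHM 1986 approximation already recalled in $\left(\ref{approx2}\right)$) and to treat the boundary regime near $t = 0$ by hand.

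For part $(i)$, I would invoke the Daniels-Wellner ratio bounds: for every $\lambda > 1$, $\mathbf{P}\{\sup_{t \in (0,1]}\mathbb{U}_{n}(t)/t > \lambda\} \le 1/\lambda$, and symmetrically $\sup_{U_{1:n} \le t \le 1} t/\mathbb{U}_{n}(t)$ is $O_{\mathbf{p}}(1)$. The truncation $t \ge a/n$ is needed only because $\mathbb{U}_{n} \equiv 0$ below the smallest order statistic $U_{1:n}$; since $nU_{1:n}$ converges in distribution to a standard exponential, $\mathbf{P}\{U_{1:n} \le a/n\}$ tends to $1-e^{-a}$, which is bounded away from $0$, whence $\sup_{a/n \le t \le 1} t/\mathbb{U}_{n}(t) = O_{\mathbf{p}}(1)$. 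The analogous bounds for the inverse $\mathbb{V}_{n}$ follow from the identity $\mathbb{U}_{n}(\mathbb{V}_{n}(t)-) \le t \le \mathbb{U}_{n}(\mathbb{V}_{n}(t))$ combined with the monotonicity of $\mathbb{V}_{n}$.

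For part $(ii)$, I would combine $\left(\ref{approx2}\right)$ with the classical weighted modulus of continuity of a Brownian bridge to obtain the bound $\sup_{1/n \le t \le 1} n^{\eta} |\mathbb{U}_{n}(t) - t|/t^{1-\eta} = O_{\mathbf{p}}(1)$, and then extend this to the full range $(0, 1]$ by noting that on $(0, U_{1:n}]$ the empirical vanishes so the weighted deviation collapses to $n^{\eta} t^{\eta} \le (nU_{1:n})^{\eta} = O_{\mathbf{p}}(1)$. The quantile counterpart follows from the empirical-quantile exchange relation $\mathbb{V}_{n}(t) - t = -(\mathbb{U}_{n}(\mathbb{V}_{n}(t)) - \mathbb{V}_{n}(t)) + O(n^{-1})$, together with part $(i)$ used to replace $\mathbb{V}_{n}(t)$ by $t$ in the weight $t^{1-\eta}$ up to a stochastic multiplicative constant.

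For part $(iii)$, I would combine part $(i)$ with Potter's inequalities $\left(\ref{Potter}\right)$. Since $na_{n} = O(1)$ and $b_{n} \downarrow 0$, part $(i)$ ensures $\sup_{a_{n} \le t \le b_{n}} (\mathbb{V}_{n}(t)/t)^{\pm 1} = O_{\mathbf{p}}(1)$, so $\mathbb{V}_{n}(t)$ remains comparable to $t$ and both tend to zero uniformly on this range. Applying Potter's inequality to $\varphi$ in the regime corresponding to its regular variation then yields $\varphi(t)/\varphi(\mathbb{V}_{n}(t)) \le (1+\epsilon_{0}) \max\{(\mathbb{V}_{n}(t)/t)^{\alpha \pm \epsilon_{0}}\}$ for all sufficiently large $n$, and the right-hand side is $O_{\mathbf{p}}(1)$ by the previous step. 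The main obstacle is the delicate bookkeeping at the left boundary $t = 0$: bridging the CsCsHM weighted bound from $[1/n, 1]$ down to the full interval $(0, 1]$ in part $(ii)$, and ensuring in part $(iii)$ that the range $[a_{n}, b_{n}]$ lies inside the scope where both part $(i)$ and Potter's inequality apply simultaneously.
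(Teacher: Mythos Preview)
Your proposal is correct and matches the paper's approach almost exactly: the paper simply cites Shorack--Wellner (1986) for assertion $(i)$ and the first half of $(ii)$, derives the quantile half of $(ii)$ from the empirical half together with $(i)$, and obtains $(iii)$ by combining $(i)$ with Potter's inequalities $\left(\ref{Potter}\right)$---precisely the ingredients you list. The only minor divergence is that for the empirical half of $(ii)$ you route the argument through the Gaussian approximation $\left(\ref{approx2}\right)$ plus a weighted Brownian-bridge bound, whereas the paper invokes the Shorack--Wellner inequality directly; your detour is valid but slightly less economical.
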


\begin{proof}
The proofs of assertion $\left(  i\right)  $ and the first result of assertion
$\left(  ii\right)  $ may be found in \cite{SW86} in pages 415 (assertions 5-8
) and 425 (assertion 16) respectively. The second result of assertion $\left(
ii\right)  $ is proved by using the first results of both assertions $\left(
i\right)  $ and $\left(  ii\right)  .$ For the third assertion $\left(
iii\right)  ,$ it suffices to apply Potter's inequalities $\left(
\ref{Potter}\right)  $ to function $\varphi$ together with assertion $\left(
i\right)  $ corresponding to $\mathbb{V}_{n}\left(  t\right)  .$
\end{proof}

\begin{lemma}
\label{Lemma1}Let $I_{n}\left(  x\right)  $ be the interval of endpoints
$\overline{H}^{\left(  1\right)  }\left(  xZ_{n-k:n}\right)  $ and
$\overline{H}_{n}^{\left(  1\right)  }\left(  xZ_{n-k:n}\right)  .$ Then%
\[
\sup_{x\geq p^{\gamma}}\sup_{t\in I_{n}\left(  x\right)  }\left(
\mathcal{L}\left(  t\right)  /\mathcal{L}\left(  \mathbb{V}_{n}\left(
t\right)  \right)  \right)  =O_{\mathbf{p}}\left(  1\right)  ,\text{ as
}n\rightarrow\infty,
\]
where $\mathcal{L}\left(  s\right)  :=\overline{F}\left(  Q^{\left(  1\right)
}\left(  s\right)  \right)  /\overline{H}\left(  Q^{\left(  1\right)  }\left(
s\right)  \right)  ,$ with $Q^{\left(  1\right)  }$ being the generalized
inverse of $\overline{H}^{\left(  1\right)  }.$
\end{lemma}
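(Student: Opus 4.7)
The plan is to reduce the supremum to a bounded power of $\mathbb{V}_{n}(t)/t$ via the regular variation of $\mathcal{L}$ near $0$, and then dominate this ratio uniformly through Proposition \ref{Propo1}. First I would identify the regular variation index of $\mathcal{L}$ at $0$. From $1-H=(1-F)(1-G)$ we have $\overline{F}/\overline{H}=1/\overline{G}$, regularly varying at infinity with index $1/\gamma_{2}$; since $\overline{H}^{(1)}(z)/\overline{H}(z)\to p$ (Lemma 4.1 in \cite{BMN-2015}) and $\overline{H}$ has index $-1/\gamma$, so has $\overline{H}^{(1)}$, whence its generalized inverse $Q^{(1)}$ is regularly varying at $0$ with index $-\gamma$. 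Composing, $\mathcal{L}(s)=1/\overline{G}(Q^{(1)}(s))$ is regularly varying at $0$ with index $-\gamma/\gamma_{2}=-q\in(-1/2,0)$, where $q<1/2$ is ensured by the assumption $\gamma_{1}<\gamma_{2}$.

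With this in hand, Potter's inequalities $(\ref{Potter})$ applied to $\mathcal{L}$ at $0$ give, for any sufficiently small $\epsilon_{0}>0$, constants $s_{0}>0$ and $C>0$ such that whenever $0<t,s\leq s_{0}$,
\[
\mathcal{L}(t)/\mathcal{L}(s)\,\leq\,C\,\max\!\left((s/t)^{q-\epsilon_{0}},\,(s/t)^{q+\epsilon_{0}}\right).
\]
Setting $s=\mathbb{V}_{n}(t)$, the lemma reduces to (a) both endpoints of $I_{n}(x)$ lying in $(0,s_{0}]$ uniformly in $x\geq p^{\gamma}$ with probability tending to $1$, and (b) $(\mathbb{V}_{n}(t)/t)^{\pm 1}=O_{\mathbf{p}}(1)$ uniformly in $t\in I_{n}(x)$ and $x\geq p^{\gamma}$. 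Claim (a) follows by combining $(n/k)\overline{H}(Z_{n-k:n})\overset{\mathbf{p}}{\to}1$ with $\overline{H}^{(1)}(Z_{n-k:n})/\overline{H}(Z_{n-k:n})\overset{\mathbf{p}}{\to}p$, Potter applied to the regularly varying function $\overline{H}^{(1)}$ (index $-1/\gamma$), and representation $(\ref{HN1})$ together with Proposition $\ref{Propo1}(ii)$ applied to $\mathbb{U}_{n}$; both endpoints are then of order $(k/n)\,x^{-1/\gamma\pm\epsilon_{0}}\leq s_{0}$ for $n$ large.

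For claim (b), Proposition $\ref{Propo1}(i)$ immediately yields $(\mathbb{V}_{n}(t)/t)^{\pm 1}=O_{\mathbf{p}}(1)$ on $\{t\geq a/n\}$ for any fixed $a>0$, and the matching Potter lower bound $\overline{H}^{(1)}(xZ_{n-k:n})\geq (p/2)(k/n)\,x^{-1/\gamma}$ guarantees $t\geq a/n$ throughout $I_{n}(x)$ as long as $x^{-1/\gamma}\geq 2a/(pk)$. Feeding this back into the Potter bound on $\mathcal{L}$ closes the argument in this range.

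The main obstacle is the residual regime of very large $x$, where $\overline{H}^{(1)}(xZ_{n-k:n})$ falls below $a/n$ and $\mathbb{V}_{n}(t)$ collapses to $U_{1:n}$, so that Proposition $\ref{Propo1}(i)$ no longer directly controls $\mathbb{V}_{n}(t)/t$. The remedy is to use $nU_{1:n}=O_{\mathbf{p}}(1)$ together with the monotonicity of $\mathcal{L}$: on the event $\{nU_{1:n}\leq M\}$, which has probability arbitrarily close to $1$ for $M$ large, both $\mathcal{L}(t)$ and $\mathcal{L}(U_{1:n})$ are of comparable order, and a direct Potter comparison between them in the range $t,U_{1:n}\asymp 1/n$ yields the required $O_{\mathbf{p}}(1)$ bound. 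Patching this estimate with the one from the previous paragraph completes the proof; this transition between the two regimes is the technical crux of the argument.
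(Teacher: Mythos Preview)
Your overall strategy---recognize $\mathcal{L}$ as regularly varying at $0$ with index $-q$, apply Potter's inequalities $(\ref{Potter})$, and control $(\mathbb{V}_n(t)/t)^{\pm 1}$ through Proposition~\ref{Propo1}---is precisely the paper's. The paper, however, does not split into a ``moderate $x$'' regime and a ``residual large $x$'' regime. It shows directly that, with probability tending to one and uniformly over $x\ge p^{\gamma}$,
\[
I_n(x)\subset\bigl[(1-\xi)\,p\,x^{-1/\gamma}/n,\ c\,k/n\bigr),
\]
using the uniform equivalence $\overline{H}^{(1)}(xZ_{n-k:n})=(1+o_{\mathbf p}(1))\,p\,x^{-1/\gamma}\overline{H}(Z_{n-k:n})$ (from Lemma~4.1 in \cite{BMN-2015}), the a.s.\ identity $\overline{H}_n^{(1)}=\mathbb U_n\!\circ\!\overline{H}^{(1)}$, and Proposition~\ref{Propo1}$(i)$. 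It then observes that $x\ge p^{\gamma}$ forces $p\,x^{-1/\gamma}\le 1$, so that $n\,a_n(x)\le 1-\xi$ uniformly, and concludes by a single appeal to Proposition~\ref{Propo1}$(iii)$, bypassing any case distinction.

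Your handling of the residual regime contains a genuine gap. You claim that for very large $x$ one has ``$t,\,U_{1:n}\asymp 1/n$'' and hence a direct Potter comparison gives $\mathcal{L}(t)/\mathcal{L}(U_{1:n})=O_{\mathbf p}(1)$. But in that regime the theoretical endpoint behaves like $p\,(k/n)\,x^{-1/\gamma}$, which for unbounded $x$ becomes arbitrarily smaller than $1/n$; then $\mathbb V_n(t)=U_{1:n}$ and Potter gives $\mathcal{L}(t)/\mathcal{L}(U_{1:n})\approx (U_{1:n}/t)^{q}\approx (x^{1/\gamma}/k)^{q}$, which is \emph{not} bounded uniformly in $x$. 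Neither monotonicity of $\mathcal{L}$ nor $nU_{1:n}=O_{\mathbf p}(1)$ helps in this direction, since the problematic ratio is $\mathbb V_n(t)/t$ being large, not small. If you want to avoid invoking Proposition~\ref{Propo1}$(iii)$ as a black box, you should follow the paper's localization to the interval $[(1-\xi)px^{-1/\gamma}/n,\,ck/n)$ and exploit the uniform bound $na_n(x)\le 1-\xi$, rather than the two-regime split you propose.
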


\begin{proof}
First, we show that, for any small $0<\xi<1,$ there exists a constant $c>0,$
such that $I_{n}\left(  x\right)  $ is included in the half-open interval
$\left[  \left(  1-\xi\right)  px^{-1/\gamma}/n,ck/n\right)  ,$ with
probability close to $1,$ as $n\rightarrow\infty.$ Indeed, following similar
arguments as those used in the proof of part $(i)$ in Lemma 4.1 in
\cite{BMN-2015}, we infer that for all large $n$
\begin{equation}
\overline{H}^{\left(  1\right)  }\left(  xZ_{n-k:n}\right)  =\left(
1+o_{\mathbf{p}}\left(  1\right)  \right)  px^{-1/\gamma}\overline{H}\left(
Z_{n-k:n}\right)  , \label{equi}%
\end{equation}
uniformly on $x\geq p^{\gamma}.$ We have $\overline{H}\left(  Z_{n-k:n}%
\right)  =\left(  1+o_{\mathbf{p}}\left(  1\right)  \right)  k/n,$ which
implies that
\[
\mathbf{P}\left\{  \left(  1-\xi\right)  px^{-1/\gamma}\dfrac{k}{n}%
<\overline{H}^{\left(  1\right)  }\left(  xZ_{n-k:n}\right)  <\left(
1+\xi\right)  px^{-1/\gamma}\dfrac{k}{n}\right\}  \rightarrow1,\text{ as
}n\rightarrow\infty.
\]
Since $k>1$ and $x\geq p^{\gamma},$ then
\begin{equation}
\mathbf{P}\left\{  \dfrac{\left(  1-\xi\right)  px^{-1/\gamma}}{n}%
<\overline{H}^{\left(  1\right)  }\left(  xZ_{n-k:n}\right)  <\left(
1+\xi\right)  \dfrac{k}{n}\right\}  \rightarrow1,\text{ as }n\rightarrow
\infty. \label{P1}%
\end{equation}
On the other hand, it is obvious that for any $x\geq p^{\gamma},$
$\mathbf{P}\left(  \overline{H}_{n}^{\left(  1\right)  }\left(  xZ_{n-k:n}%
\right)  \geq1/n\right)  =1$ and $\overline{H}_{n}^{\left(  1\right)  }\left(
xZ_{n-k:n}\right)  =\mathbb{U}_{n}\left(  \overline{H}^{\left(  1\right)
}\left(  xZ_{n-k:n}\right)  \right)  ,$ a.s. Then, in view of assertion $(i)$
in Proposition $\left(  \ref{Propo1}\right)  ,$ we have $\overline{H}%
_{n}^{\left(  1\right)  }\left(  xZ_{n-k:n}\right)  =O_{\mathbf{p}}\left(
\overline{H}^{\left(  1\right)  }\left(  xZ_{n-k:n}\right)  \right)  ,$ it
follows, from $\left(  \ref{equi}\right)  ,$ that $\overline{H}_{n}^{\left(
1\right)  }\left(  xZ_{n-k:n}\right)  =O_{\mathbf{p}}\left(  k/n\right)  ,$
uniformly on $x\geq p^{\gamma}.$ This means that there exists $d>0$ such that
\begin{equation}
\mathbf{P}\left\{  \dfrac{\left(  1-\xi\right)  px^{-1/\gamma}}{n}%
\leq\overline{H}_{n}^{\left(  1\right)  }\left(  xZ_{n-k:n}\right)
<dk/n\right\}  \rightarrow1,\text{ as }n\rightarrow\infty. \label{P2}%
\end{equation}
Therefore, from $\left(  \ref{P1}\right)  $ and $\left(  \ref{P2}\right)  ,$
that $\mathbf{P}\left\{  I_{n}\left(  x\right)  \subset\left[  \left(
1-\xi\right)  px^{-1/\gamma}/n,ck/n\right)  \right\}  $ tends to $1$ as
$n\rightarrow\infty,$ for any $x\geq p^{\gamma},$ where $c:=\max\left(
d,1+\xi\right)  ,$ as sought. Next, let $a_{n}\left(  x\right)  :=\left(
1-\xi\right)  px^{-1/\gamma}/n$ and $b_{n}:=ck/n\downarrow0.$ It is clear that
$0<a_{n}\left(  x\right)  <b_{n}<1$ and $na_{n}\left(  x\right)  <1-\xi$ that
is $na_{n}\left(  x\right)  =O\left(  1\right)  $ for any $x\geq p^{\gamma}.$
Then, by using assertion $(iii)$ of Proposition $\ref{Propo1},$ we get
$\sup_{t\in I_{n}\left(  x\right)  }\left(  \mathcal{L}\left(  t\right)
/\mathcal{L}\left(  \mathbb{V}_{n}\left(  t\right)  \right)  \right)
=O_{\mathbf{p}}\left(  1\right)  ,$ as $n\rightarrow\infty,$ uniformly on
$x\geq p^{\gamma}.$
\end{proof}

\begin{lemma}
\label{Lemma2}Let $\beta_{n}$ and $\widetilde{\beta}_{n}$ be the two empirical
processes respectively defined in $\left(  \ref{betan}\right)  $ and $\left(
\ref{beta-tild}\right)  .$ Then, for all large $n$ and any $1/4<\eta<p/2,$ we
have%
\[
\left(  i\right)  \text{ }\sup_{w>1}w^{\left(  1-\eta\right)  /\gamma
}\left\vert \beta_{n}\left(  w\right)  \right\vert =o_{\mathbf{p}}\left(
1\right)  =\sup_{w>1}w^{\left(  1-\eta\right)  /\gamma}\left\vert
\widetilde{\beta}_{n}\left(  w\right)  \right\vert .
\]
Moreover, for any small $\epsilon_{0}>0,$ we have uniformly on
$0<s<x^{-1/\gamma_{1}},$%
\[
\left(  ii\right)  \text{ }\beta_{n}\left(  \xi_{n}\left(  s\right)  \right)
-\beta_{n}\left(  s^{-\gamma_{1}}\right)  =o_{\mathbf{p}}\left(  s^{\left(
1-\eta\right)  \gamma_{1}/\gamma\pm\epsilon_{0}}\right)  ,
\]
where $\xi_{n}\left(  s\right)  $ is of the form $\left(  \ref{ksi}\right)  .$
\end{lemma}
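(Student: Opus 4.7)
The plan is to reduce both assertions to estimates on Brownian bridge increments via the uniform Gaussian coupling $(\ref{approx1})$, then exploit the regular-variation behaviour of $\overline{H}^{(1)}$ and $\overline{H}^{(0)}$ at infinity together with Potter's inequalities $(\ref{Potter})$. The starting point is to use $(\ref{rep-H1})$ and $(\ref{rep-H0})$ to rewrite
\[
\beta_n(w) = \sqrt{n/k}\,\sqrt{n}\,\bigl[\overline{H}_n^{(1)}(wZ_{n-k:n}) - \overline{H}^{(1)}(wZ_{n-k:n})\bigr], \qquad \widetilde{\beta}_n(w) = \sqrt{n/k}\,\sqrt{n}\,\bigl[\overline{H}_n^{(0)}(wZ_{n-k:n}) - \overline{H}^{(0)}(wZ_{n-k:n})\bigr],
\]
so that the problem reduces to controlling fluctuations of the uniform empirical process at the points $s_w := \overline{H}^{(1)}(wZ_{n-k:n})$ and $\widetilde{s}_w := \overline{H}^{(0)}(wZ_{n-k:n})$, which (by an argument already present in the proof of Lemma $\ref{Lemma1}$, see $(\ref{equi})$) satisfy $s_w \sim p\, w^{-1/\gamma}\overline{H}(Z_{n-k:n})$ and $\widetilde{s}_w \sim q\, w^{-1/\gamma}\overline{H}(Z_{n-k:n})$, both of order $(k/n)w^{-1/\gamma}$ up to $w^{\pm\epsilon_0}$ corrections via Potter.

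For $(i)$, I would apply $(\ref{approx1})$ to replace $\alpha_n(\theta) - \alpha_n(\theta - s_w)$ by the Brownian bridge increment $B_n(\theta) - B_n(\theta - s_w)$, at the cost of a uniform remainder of order $O_\mathbf{p}(n^{\eta - 1/2}) s_w^{\eta}$. After scaling by $\sqrt{n/k}$ and using $s_w = O_\mathbf{p}((k/n) w^{-1/\gamma \pm \epsilon_0})$, this remainder is $O_\mathbf{p}(k^{\eta - 1/2} w^{-\eta/\gamma \pm \epsilon_0})$ and becomes $o_\mathbf{p}(w^{-(1-\eta)/\gamma})$ uniformly once the constraint $1/4<\eta<p/2$ is invoked and the effective range $1 < w < (pk)^\gamma$ is taken into account. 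The main Brownian bridge part is controlled by $\mathbf{E}|B_n(\theta) - B_n(\theta - s_w)|^2 = s_w(1-s_w)$, upgraded to a sup-norm statement by Lévy's modulus-of-continuity theorem applied to the Wiener representation $(\ref{W})$; scaling by $\sqrt{n/k}$ and Potter's bound then yield, uniformly in $w$ in the bulk regime $s_w \geq 1/n$, the decay $|\beta_n(w)| = O_\mathbf{p}(w^{-1/(2\gamma) \pm \epsilon_0})$ up to polylogarithmic factors, and combining with the explicit factor $w^{(1-\eta)/\gamma}$ produces $o_\mathbf{p}(1)$. In the remaining tail regime $s_w < 1/n$ (equivalently $w$ of order a power of $k$), the trivial deterministic bound $|\beta_n(w)| \leq \sqrt{n/k}\sqrt{n}\,(\overline{H}^{(1)} + \overline{H}_n^{(1)})(wZ_{n-k:n}) = O_\mathbf{p}(1/\sqrt{k})$ closes the argument. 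The identical treatment, using $(\ref{approx2})$ in place of $(\ref{approx1})$, handles $\widetilde{\beta}_n$.

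For $(ii)$, observe that by $(\ref{rep-H1})$,
\[
\beta_n(\xi_n(s)) - \beta_n(s^{-\gamma_1}) = \sqrt{n/k}\bigl[\alpha_n(\theta - s_2^*) - \alpha_n(\theta - s_1^*)\bigr],
\]
with $s_1^* := \overline{H}^{(1)}(\xi_n(s)Z_{n-k:n})$ and $s_2^* := \overline{H}^{(1)}(s^{-\gamma_1} Z_{n-k:n})$. Applying $(\ref{approx1})$ replaces the right-hand side by the Brownian bridge increment $\sqrt{n/k}[B_n(\theta - s_2^*) - B_n(\theta - s_1^*)]$ up to a negligible coupling error of the same type as in $(i)$. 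Potter's inequalities applied to $F^{\leftarrow}(1 - \cdot)$ give $\xi_n(s) = s^{-\gamma_1 \pm \epsilon_0}$ uniformly in the range $0<s<x^{-1/\gamma_1}$, and combined with the regular variation of $\overline{H}^{(1)}$ of index $-1/\gamma$ this forces both $s_1^*$ and $s_2^*$, as well as their difference, to be of order $(k/n) s^{1/p \pm \epsilon_0}$. The $L^2$-estimate $\sqrt{|s_1^* - s_2^*|}$ for the Brownian bridge increment, rescaled by $\sqrt{n/k}$ and promoted to a uniform bound through the Wiener factorisation $(\ref{W})$, then yields the claimed $o_\mathbf{p}(s^{(1-\eta)\gamma_1/\gamma \pm \epsilon_0})$ bound after invoking $\gamma_1/\gamma = 1/p > 1$.

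The principal technical obstacle is the careful bookkeeping of the $\pm\epsilon_0$ factors that accumulate through repeated uses of Potter's inequalities and of the Gaussian coupling, together with the need to supplement the coupling-based argument by a purely deterministic bound in the tail regime $s_w < 1/n$ where $(\ref{approx1})$ is inapplicable. A second, more subtle issue is that the Brownian-motion sample-path estimate must be used in a genuinely uniform form (obtained from $(\ref{W})$ combined with classical increment estimates for Wiener processes) so that sup-norm rather than merely pointwise-in-probability statements can be extracted at the end.
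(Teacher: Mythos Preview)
Your route differs substantially from the paper's. For assertion $(i)$ the paper never invokes the Gaussian coupling $(\ref{approx1})$: it first uses the distributional identity $\{\alpha_n(\theta)-\alpha_n(\theta-t):0<t<\theta\}\overset{\mathcal{D}}{=}\{\alpha_n(t):0<t<\theta\}$ to write $\beta_n(w)\overset{\mathcal{D}}{=}\sqrt{n/k}\,\alpha_n\bigl(\overline{H}^{(1)}(wZ_{n-k:n})\bigr)$, and then applies directly the weighted empirical-process bound of Proposition~\ref{Propo1}$(ii)$, namely $\sup_{0<t\le1}n^{\eta}|\mathbb{U}_n(t)-t|/t^{1-\eta}=O_{\mathbf p}(1)$. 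This one-line reduction avoids Brownian-bridge analysis altogether and eliminates your bulk/tail case split. For assertion $(ii)$ the paper again bypasses coupling: it establishes the distributional identity $\beta_n(\xi_n(s))-\beta_n(s^{-\gamma_1})\overset{\mathcal{D}}{=}\beta_n\bigl(\xi_n(s)-s^{-\gamma_1}\bigr)$ and feeds this back into the bound from $(i)$, combined with Potter's estimate $\xi_n(s)-s^{-\gamma_1}=o_{\mathbf p}(s^{-1/\gamma_1\pm\epsilon_0})$.

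There is moreover a concrete gap in your argument for $(i)$. The modulus-of-continuity bound on the Brownian part gives $\sqrt{n/k}\,|B_n(\theta)-B_n(\theta-s_w)|=O_{\mathbf p}\bigl(w^{-1/(2\gamma)\pm\epsilon_0}\bigr)$ as you say, but after multiplying by the weight $w^{(1-\eta)/\gamma}$ this becomes $w^{(1/2-\eta)/\gamma\pm\epsilon_0}$, whose exponent is \emph{strictly positive} throughout the stated range $1/4<\eta<p/2<1/2$; over the effective interval $1<w\lesssim k^{\gamma}$ it therefore grows like $k^{1/2-\eta}$ rather than tending to zero, and the coupling remainder has the same defect after reweighting. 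The $L^2$/modulus estimate captures only the $s_w^{1/2}$ scale of the increment and is simply too crude here; what is needed is a bound of the form $|\alpha_n(s_w)|=O_{\mathbf p}(n^{1/2-\eta})s_w^{1-\eta}$, i.e.\ precisely Proposition~\ref{Propo1}$(ii)$, which trades an $n$-power for the higher exponent $1-\eta$ on $s_w$. The same deficiency carries over to your treatment of $(ii)$: the raw Brownian-increment estimate $\sqrt{|s_1^*-s_2^*|}$ yields at best decay of order $s^{1/(2p)\pm\epsilon_0}$, which is slower than the required $s^{(1-\eta)/p\pm\epsilon_0}$ for $\eta<1/2$.
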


\begin{proof}
Recall that%
\[
\beta_{n}\left(  w\right)  =\sqrt{\frac{n}{k}}\left\{  \alpha_{n}\left(
\theta\right)  -\alpha_{n}\left(  \theta-\overline{H}^{\left(  1\right)
}\left(  wZ_{n-k:n}\right)  \right)  \right\}  ,\text{ for }0<\overline
{H}^{\left(  1\right)  }\left(  wZ_{n-k:n}\right)  <\theta,
\]
and note that $\left\{  \alpha_{n}\left(  \theta\right)  -\alpha_{n}\left(
\theta-t\right)  ,\text{ }0<t<\theta\right\}  \overset{\mathcal{D}}{=}\left\{
\alpha_{n}\left(  t\right)  ,\text{ }0<t<\theta\right\}  .$ Then without loss
of generality, we may write $\beta_{n}\left(  w\right)  =\sqrt{\dfrac{n}{k}%
}\alpha_{n}\left(  \overline{H}^{\left(  1\right)  }\left(  wZ_{n-k:n}\right)
\right)  ,$ for any $w>1.$ It is easy to verify that
\[
w^{\left(  1-\eta\right)  /\gamma}\left\vert \beta_{n}\left(  w\right)
\right\vert =\frac{n^{1-\eta}}{\sqrt{k}}\left(  \frac{\overline{H}^{\left(
1\right)  }\left(  wZ_{n-k:n}\right)  }{w^{-1/\gamma}}\right)  ^{\eta-1}%
\frac{n^{\eta}\left\vert \mathbb{U}_{n}\left(  \overline{H}^{\left(  1\right)
}\left(  wZ_{n-k:n}\right)  \right)  -\overline{H}^{\left(  1\right)  }\left(
wZ_{n-k:n}\right)  \right\vert }{\left(  \overline{H}^{\left(  1\right)
}\left(  wZ_{n-k:n}\right)  \right)  ^{\eta-1}}.
\]
Making use of Proposition $\ref{Propo1}$ (assertion $\left(  ii\right)  ),$ we
infer that
\[
\frac{n^{\eta}\left\vert \mathbb{U}_{n}\left(  \overline{H}^{\left(  1\right)
}\left(  wZ_{n-k:n}\right)  \right)  -\overline{H}^{\left(  1\right)  }\left(
wZ_{n-k:n}\right)  \right\vert }{\left(  \overline{H}^{\left(  1\right)
}\left(  wZ_{n-k:n}\right)  \right)  ^{\eta-1}}=O_{\mathbf{p}}\left(
1\right)  ,
\]
uniformly on $w>1,$ for any $1/4<\eta<p/2.$ It follows that%
\[
w^{\left(  1-\eta\right)  /\gamma}\left\vert \beta_{n}\left(  w\right)
\right\vert =O_{\mathbf{p}}\left(  \frac{n^{1-\eta}}{\sqrt{k}}\right)  \left(
\frac{\overline{H}^{\left(  1\right)  }\left(  wZ_{n-k:n}\right)
}{w^{-1/\gamma}}\right)  ^{\eta-1}.
\]
On the other hand, by using Potter's inequalities $\left(  \ref{Potter}%
\right)  $ to both $\overline{F}$ and $\overline{G},$ with similar arguments
as those used in the proof of Lemma 4.1 (assertion $(i))$ in \cite{BMN-2015},
we may readily show that $\lim_{z\rightarrow\infty}\sup_{w>1}\left\vert
\overline{H}^{\left(  1\right)  }\left(  zw\right)  /\overline{H}\left(
z\right)  -pw^{-1/\gamma}\right\vert =0.$ Since $\dfrac{n}{k}\overline
{H}\left(  Z_{n-k:n}\right)  \overset{\mathbf{p}}{\rightarrow}1,$ then
$w^{\left(  1-\eta\right)  /\gamma}\left\vert \beta_{n}\left(  w\right)
\right\vert =O_{\mathbf{p}}\left(  \left(  k/n\right)  ^{\eta}k^{1/2-2\eta
}\right)  $ uniformly on $w>1.$ We have $k\rightarrow\infty,$ $k/n\rightarrow
0$ and $1/2-2\eta<0,$ for any $1/4<\eta<p/2,$ therefore $w^{\left(
1-\eta\right)  /\gamma}\left\vert \beta_{n}\left(  w\right)  \right\vert
=o_{\mathbf{p}}\left(  1\right)  ,$ uniformly on $w>1,$ leading to the first
result of assertion $\left(  i\right)  .$ The proof of the second result
follows similar arguments. For assertion $\left(  ii\right)  ,$ we first note
that%
\[
\left\{  \beta_{n}\left(  \xi_{n}\left(  s\right)  \right)  -\beta_{n}\left(
s^{-\gamma_{1}}\right)  ,0<s<1\right\}  \overset{\mathcal{D}}{=}\left\{
\beta_{n}\left(  \xi_{n}\left(  s\right)  -s^{-\gamma_{1}}\right)
,0<s<1\right\}  .
\]
By using assertion $\left(  i\right)  ,$ we write $\beta_{n}\left(  \xi
_{n}\left(  s\right)  -s^{-\gamma_{1}}\right)  =o_{\mathbf{p}}\left(
\left\vert \xi_{n}\left(  s\right)  -s^{-\gamma_{1}}\right\vert ^{\left(
\eta-1\right)  /\gamma}\right)  ,$ uniformly on $0<s<x^{-1/\gamma_{1}}$ and by
applying (once again) Potter's inequalities $\left(  \ref{Potter}\right)  $ to
$F^{\mathbb{\leftarrow}}\left(  1-\cdot\right)  ,$ we infer that $\xi
_{n}\left(  s\right)  -s^{-\gamma_{1}}=o_{\mathbf{p}}\left(  s^{-1/\gamma
_{1}\pm\epsilon_{0}}\right)  .$ It follows that%
\[
\beta_{n}\left(  \xi_{n}\left(  s\right)  -s^{-\gamma_{1}}\right)
=o_{\mathbf{p}}\left(  s^{\left(  1-\eta\right)  \gamma_{1}/\gamma\pm
\epsilon_{0}}\right)  ,
\]
which completes the proof.
\end{proof}

\end{document}